\newtheorem{thm}{Theorem}[section] 
\newtheorem{cor}[thm]{Corollary}
\newtheorem{lem}[thm]{Lemma}
\newtheorem{prop}[thm]{Proposition}
\newtheorem*{theorem*}{Theorem}
\newtheorem*{cor*}{Corollary}
\theoremstyle{definition}
\newtheorem{defn}[thm]{Definition}
\newtheorem{notation}[thm]{Notation}
\newtheorem{convention}[thm]{Convention}
\newtheorem{indhyp}[thm]{Inductive Hypothesis}
\newtheorem{ClosureProp}[thm]{Closure Properties}
\theoremstyle{remark}
\newtheorem{rmk}[thm]{Remark}
\newtheorem{ex}[thm]{Example}
\crefname{lem}{Lemma}{Lemmas}
\crefname{thm}{Theorem}{Theorems}
\crefname{defn}{Definition}{Definitions}
\crefname{prop}{Proposition}{Propositions}
\crefname{rmk}{Remark}{Remarks}
\crefname{cor}{Corollary}{Corollaries}
\crefname{ex}{Example}{Examples}
\crefname{notation}{Notation}{Notations}
\crefname{convention}{Convention}{Conventions}
\crefname{enumi}{}{}
\crefname{indhyp}{Inductive Hypothesis}{Inductive Hypotheses}
\newlist{abc}{enumerate}{1}
\setlist[abc]{label=(\alph*)}
\newlist{rome}{enumerate}{7}
\setlist[rome]{label=(\roman*)}
\newcommand{\cat}{\mathrm{Cat}}
\newcommand{\gpd}{\mathrm{Gpd}}
\newcommand{\set}{\mathrm{Set}}
\newcommand{\sset}{s\set}
\newcommand{\topo}{\mathrm{Top}}
\newcommand{\ssetp}{\sset_*}
\newcommand{\ssetpn}{{\ssetp}_{[0,n]}} 
\newcommand{\topn}{\topo_{[0,n]}} 
\newcommand{\Gsset}{\Gamma\ssetp}
\newcommand{\vsGsset}{\mathrm{v.s.}\Gsset} 
\newcommand{\Gssetn}{\Gamma\ssetpn} 
\newcommand{\vsGssetn}{\mathrm{v.s.}\Gssetn}
\newcommand{\Tam}{\mathrm{Tam}}
\newcommand{\GTam}{\mathrm{GTam}}
\newcommand{\vsGammaGTam}{\mathrm{v.s.}\Gamma\GTam}
\newcommand{\PicGTam}{\mathrm{PicTam}}
\newcommand{\spectra}{\mathrm{Sp}}
\newcommand{\cospectra}{\Omega\spectra_{\geq 0}} 
\newcommand{\spectran}{\spectra_{[0,n]}} 
\newcommand{\ospectran}{\Omega\spectra_{[0,n]}}
\newcommand{\Sym}{\mathrm{SymMonCat}}
\newcommand{\Nhom}{\mathrm{NHom}}
\newcommand{\GammaCat}{[\Gamma,\cat]}
\newcommand{\GammaGpd}{[\Gamma,\gpd]}
\newcommand{\Pic}{\mathrm{Pic}}
\newcommand{\GammaGTam}{\left[\Gamma,\GTam^n\right]}
\newcommand{\GammasSet}{\left[\Gamma,\sset\right]}
\newcommand{\C}{\mathcal{C}}
\newcommand{\clW}{\mathcal{W}}
\newcommand{\R}{\mathcal{R}}
\newcommand{\Hp}{\mathcal{H}}
\newcommand{\X}{\mathcal{X}} 
\newcommand{\Y}{\mathcal{Y}}
\newcommand{\XX}{\mathcal{X}} 
\newcommand{\YY}{\mathcal{Y}} 
\newcommand{\KP}{\mathcal{K}P}
\newcommand{\NP}{N\mathcal{P}}
\newcommand{\ord}[1]{\langle {#1} \rangle}
\newcommand{\und}[1]{\underline{#1}}
\newcommand{\id}{\mathrm{id}}
\newcommand{\Dop}{\Delta^{\mathrm{op}}}
\newcommand{\Dnop}{(\Delta^{\op})^{n}}
  \newcommand\Dsop[1]{\ifthenelse{\equal{#1}{}}{\Delta^{\op}}{(\Delta^{\op})^{#1}}}
\renewcommand{\S}{\mathds{S}}
\newcommand{\cate}{\tau_1}
\newcommand{\deltatop}[1]{\Delta_{\mathrm{top}}[#1]}
\newcommand{\multising}[2]{\mathcal{S}^{#1}(#2)}
\DeclareMathOperator{\op}{op}
\DeclareMathOperator{\colim}{colim}
\DeclareMathOperator{\Ho}{Ho}
\DeclareMathOperator{\HOM}{Map} 
\DeclareMathOperator{\Hom}{Hom}
\DeclareMathOperator{\Sing}{Sing}
\DeclareMathOperator{\pr}{pr}
\DeclareMathOperator{\diag}{diag}
\newenvironment{tz}{\begin{center}\begin{tikzpicture}[scale=1]}{\end{tikzpicture}\end{center}}
\tikzstyle{d}=[double distance=.3ex]
\newcommand{\tiund}[1]{{\times}_{#1}}
\newcommand{\pro}[3]{#1\tiund{#2}\overset{#3}{\cdots}\tiund{#2}#1}
\newcommand{\funcat}[2]{[\Dsop{#1},#2]}
\newcommand{\p}[1]{p^{(#1)}}
\newcommand{\seqc}[3]{{#1}_{#2},\ldots,{#1}_{#3}}
\newcommand{\mi}{\text{-}}
\thanks{}
\begin{document}
\title{Stable homotopy hypothesis in the Tamsamani model}
\begin{abstract} We prove that symmetric monoidal weak $n$-groupoids in the Tamsamani model provide a model for stable $n$-types. Moreover, we recover the classical statement that Picard categories model stable $1$-types.
\end{abstract}

\author[L.\ Moser]{Lyne Moser}
\address{UPHESS BMI FSV, Ecole Polytechnique Fédérale de Lausanne, Station 8, 1015 Lausanne,
Switzerland}
\email{lyne.moser@epfl.ch}

\author[V.\ Ozornova]{Viktoriya Ozornova}
\address{Fakult\"at f\"ur Mathematik, Ruhr-Universit\"at Bochum, D-44780 Bochum, Germany}
\email{viktoriya.ozornova@rub.de}

\author[S.\ Paoli]{Simona Paoli}
\address{School of Mathematics and Actuarial Science, University of Leicester, Leicester LE17RH, United Kingdom}
\email{sp424@le.ac.uk}

\author[M.\ Sarazola]{Maru Sarazola}
\address{Department of Mathematics, Cornell University, Ithaca NY, 14853, USA}
\email{mes462@cornell.edu}

\author[P.\ Verdugo]{Paula Verdugo}
\address{Department of Mathematics, Macquarie University, NSW 2109, Australia}
\email{paula.verdugo@hdr.mq.edu.au}
\maketitle

\tableofcontents

\section{Introduction}
Higher categories are designed to capture different flavours and levels of compositions. From this perspective, it is somewhat surprising that Grothendieck suggested already in the 1970s that spaces should be the same as $\infty$-groupoids. This connection becomes slightly less mysterious if we think about the points of a topological space as objects, paths between them as morphisms, homotopies between paths as $2$-morphisms, homotopies between homotopies between paths as $3$-morphisms, and so on, where the morphisms in each of the higher levels can be composed by pasting the homotopies together. Grothendieck's paradigm was made precise in various ways, and provided a fruitful connection between higher category theory and topology. Versions of this statement are usually summarized under the name \emph{homotopy hypothesis}. More refined versions of this statement make it possible to identify intermediate levels, namely $n$-groupoids on one side with $n$-types on the other side, i.e.,\ with spaces whose homotopy groups are concentrated in dimensions $\leq n$.

A word of warning is in order at this point. We can fill the notion of $n$-groupoids with different meanings, and it turns out not all of them work equally well for the purpose of the homotopy hypothesis. Most notably, the easiest and least ambiguous notion of strict $n$-groupoid is not suitable to model $n$-types for any $n\geq 3$, as was unexpectedly shown by Simpson \cite{SimpsonCounterexample}. To make the homotopy hypothesis true, we are bound to work with weak higher categories, and there are many different models available to encode such structures. At first glance, it is the nicest case to have a fully algebraic model, i.e.,\ one that can be described by a finite amount of data subject to a finite list of axioms, as done e.g.\ in the case of bicategories. However, this approach gets out of hand very quickly: while tricategories are still in use as, for example, in \cite{GordonPowerStreet}, \cite{GurskiTricat}, \cite{CegarraHeredia}, \cite{CarquevilleMeusburgerSchaumann}, and the definition of tetracategories can still be made explicit in principle \cite{TrimbleTetra}, any higher structure seems out of reach.

This leads to the conclusion that we need another way to encode higher-categorical information. There are various approaches for doing so, nicely summarized in \cite{Leinster}. The common idea of all of them is to provide a framework which automatically captures all the higher coherence data and axioms for them. We concentrate in this paper on one of the Segal-type models, which are treated in great detail in \cite{Paoli-2019}. The basic idea is as follows. Ordinary categories can be considered as a full subcategory of simplicial sets via the nerve functor. Its essential image can be characterized by the Segal condition, saying for a simplicial set $X$ that a certain map $X_n \to X_1 \times_{X_0} X_1 \times_{X_0} \ldots \times_{X_0} X_1$ built out of simplicial structure maps has to be an  isomorphism. This idea was generalized by Tamsamani to make $n$-fold simplicial sets satisfying certain iterated weak Segal conditions into the model of weak $n$-categories, which we today call \emph{Tamsamani $n$-categories}. With this notion of higher categories, Tamsamani was able to prove the homotopy hypothesis, showing that the homotopy category of Tamsamani $n$-groupoids is equivalent to the homotopy category of $n$-types \cite{Tamsamani}.

The category of spaces is quite complicated and mysterious. One ubiquitous way to access it is to linearize it to a degree, by working with the stable homotopy category instead. The \emph{stable homotopy hypothesis} asserts that at least certain stable homotopy types have a categorical incarnation, in the form of symmetric monoidal higher categories. First versions of this principle for the case of symmetric monoidal groupoids can be found in \cite{Sinh}, \cite{DrinfeldInfDim}, \cite{BoyarchenkoNotes}, \cite{Patel}, \cite{JohnsonOsorno}; we refer the reader to \cite{GurskiJohnsonOsornoNotices} for an accessible survey and historical account. It is also closely related to the Baez--Dolan stabilization hypothesis, which was proven in $\infty$-categorical terms for the Tamsamani model in \cite{GepnerHaugseng}. 

The stable homotopy types we aim to model are stable $n$-types, i.e.,\ those whose homotopy groups are concentrated in the interval $[0,n]$. On the categorical side, we encounter the same problem as in the -- unstable -- homotopy hypothesis, only that it starts earlier. Symmetric monoidal categories can be viewed as bicategories with one object and extra symmetry and thus have roughly the same complexity as bicategories. One dimension further, a symmetric monoidal bicategory is a special case of a tricategory with an extra datum and is already quite awkward to write down explicitly. In particular, this makes the homotopy hypothesis for stable $2$-types \cite{GurskiJohnsonOsornoStephan}, \cite{GurskiJohnsonOsornoSHH} very complicated. Another complication of a possibly more fundamental nature is the fact that tricategories cannot be strictified in general \cite{GordonPowerStreet}, as opposed to bicategories which are always equivalent to a strict $2$-category. This strictification is a result on which previous work for lower levels heavily relies on.

This makes it necessary to work with a different model for higher categories again. A well-known way to encode symmetric monoidal structures in different contexts goes back to Segal \cite{SegalCat} and uses the category $\Gamma$ of finite pointed sets and basepoint preserving maps between them, cf.\ e.g.\ \cite{LeinsterHoAlgebra}, \cite{LurieHTT}. We use this approach to define a version of symmetric monoidal weak $n$-groupoids which we call \emph{Picard--Tamsamani $n$-categories}. With this notion, we are able to prove the following main result.

\begin{theorem*}[\cref{StableHH}]
The homotopy category of Picard--Tamsamani $n$-categories is equivalent to the homotopy category of stable $n$-types.
\end{theorem*}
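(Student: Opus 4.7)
The plan is to factor the desired equivalence through $\Gamma$-objects in Tamsamani $n$-groupoids and then to use Segal's theorem to pass to connective spectra. By construction, a Picard--Tamsamani $n$-category is (essentially) a special $\Gamma$-object $X \colon \Gamma \to \Tam^n$ with values in Tamsamani $n$-groupoids, together with a Picard/grouplike condition. Applying Tamsamani's classifying space functor $B \colon \Tam^n \to \spaces$ levelwise to such an $X$ produces a $\Gamma$-space $B \circ X$; the goal is to show that this assignment descends to an equivalence $\Ho(\PicGTam^n) \simeq \Ho(\spectran)$.

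Concretely, I would proceed in three steps. First, verify that $B$ sends finite products of Tamsamani $n$-groupoids to homotopy products of spaces, so that $B \circ X$ is automatically a special $\Gamma$-space whenever $X$ is. Second, check that $B$ translates the Picard condition on $X$ (invertibility of the monoid structure induced on $\pi_0$) into the grouplike condition on $B \circ X$, so that $B \circ X$ is very special. Third, combine these facts with the homotopy hypothesis in the Tamsamani model -- an equivalence between the homotopy category of Tamsamani $n$-groupoids and that of $n$-types -- to promote the levelwise classifying space functor into an equivalence between $\Ho(\PicGTam^n)$ and the homotopy category of very special $n$-truncated $\Gamma$-spaces. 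Segal's theorem then identifies the latter with $\Ho(\spectran)$, since an $n$-truncated very special $\Gamma$-space corresponds exactly to a connective spectrum with homotopy groups concentrated in $[0,n]$.

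The main obstacle will be the third step: promoting Tamsamani's pointwise equivalence of homotopy categories to an equivalence that respects the $\Gamma$-diagram structure together with the Segal and Picard conditions. The difficulty is that from a very special $n$-truncated $\Gamma$-space one must construct a special Picard $\Gamma$-object in Tamsamani $n$-groupoids mapping back to it, and this typically requires a strictification step -- either a coherent pointwise choice of homotopy inverses to $B$ that can be glued into a $\Gamma$-functor, or a Quillen equivalence between suitable model structures on $[\Gamma, \Tam^n]$ and $[\Gamma, \spaces]$ whose fibrant objects are the special, respectively very special, diagrams. A secondary but genuine point is to match the $n$-truncation on both sides: Tamsamani $n$-groupoids are intrinsically $n$-truncated, and one has to confirm that under $B$ this corresponds to the standard $n$-truncation of the underlying spectrum, so that all connective spectra with $\pi_i = 0$ for $i > n$ are in the essential image. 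Once this strictification and truncation-matching is in place, the theorem follows by composing with Segal's delooping machine.
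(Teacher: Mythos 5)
Your overall architecture matches the paper's: identify Picard--Tamsamani $n$-categories with very special $\Gamma$-objects in $\GTam^n$, pass to very special $\Gamma$-spaces by applying the classifying space levelwise, and then use the Bousfield--Friedlander/Segal machinery (restricted to homotopy groups in $[0,n]$) to reach $\Ho(\spectran)$. Your first two steps are exactly \cref{Gamma_GTam_to_Gamma_sset} (together with \cref{pi0_GTam_compatibility}), and your ``secondary'' truncation-matching point is handled in \cref{equiv_gamma_and_spectra} and \cref{half_of_SHH_from_BF}. However, the step you flag as ``the main obstacle'' is left genuinely unresolved in your proposal, and the remedies you sketch (a coherent pointwise choice of homotopy inverses to $B$ glued into a $\Gamma$-functor, or a Quillen equivalence between model structures on $[\Gamma,\GTam^n]$ and $[\Gamma,\sset]$) are not what is needed and would be considerably harder than the actual argument. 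The missing idea is that Tamsamani's Poincar\'e $n$-groupoid is already a point-set level functor $\Pi_n\colon\topo\to\GTam^n$ coming with \emph{natural} comparison maps: a natural $n$-equivalence $\XX\to\Pi_n|\XX|$ and a natural zigzag of weak equivalences $|\Pi_n Z|\to Z$. Hence the homotopy inverse to levelwise $B$ is simply post-composition with $P_n=\Pi_n\circ|-|$; no strictification is required. What does need proof is that this levelwise functor preserves the special and very special conditions, i.e.\ that $\Pi_n$ preserves finite products up to $n$-equivalence (\cref{Pi_preserves_prod}) and that $\pi_0$ is compatible with $B$ and $P_n$ (\cref{pi0_GTam_compatibility}), giving \cref{Gamma_sset_to_Gamma_GTam}. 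Since both sides are localized at \emph{levelwise} equivalences and the comparison maps are levelwise equivalences natural in the $\Gamma$-variable, the equivalence of homotopy categories (\cref{half_of_SHH_from_UHH}) follows directly from \cref{UHH}, with no model structures on the diagram categories.

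Two smaller points you gloss over but which require an argument: first, the identification of $\PicGTam^n$ with very special $\Gamma$-objects in $\GTam^n$ is not purely definitional --- one must check that the one-object Tamsamani $(n{+}1)$-groupoid condition is equivalent to the special-plus-grouplike condition (\cref{PicTam_are_vsGammaGTam}, using the truncation functors $p^{(r)}$), and that the weak equivalences of \cref{equivPic} coincide with levelwise $n$-equivalences (\cref{we_PicTam_vsGammaGTam}, which rests on the fact that an $(n{+}1)$-equivalence between one-object objects is a levelwise $n$-equivalence). Second, ``Segal's theorem'' must be applied in the strict (levelwise) form of Bousfield--Friedlander (\cref{BF}), combined with $\pi_iA\cong\pi_iA\ord{1}$ for very special $A$ to see that the equivalence restricts to $[0,n]$-types, and then composed with the equivalence $\Ho(\spectran)^{\text{stable}}\simeq\Ho(\ospectran)^{\text{strict}}$ induced by $Q$; stating it as an identification of $n$-truncated very special $\Gamma$-spaces with $\Ho(\spectran)$ hides these verifications.
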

Since there were already versions of the stable homotopy hypothesis available for $n\leq 2$, it is reasonable to ask for a comparison of the theorem above to the existing statements. We address this question in the case of stable $1$-types. It is well-known that Picard categories model stable $1$-types, although the statement seemed to be folklore for a long time, and there is a well-developed algebraic theory for Picard categories (sometimes under the name \lq\lq{}abelian $2$-groups\rq\rq{}) \cite{JibladzePirashvili}, \cite{Pirashvili}. To connect our results to the known statements, we observe that Segal's $K$-theory construction \cite{SegalCat} is closely related, when restricted to Picard categories, to the Lack--Paoli $2$-nerve from \cite{LackPaoli}.

\begin{theorem*}[\cref{KvsNerveThm}] For any Picard category $P$, there is a natural equivalence $\KP\to \NP$ between the underlying simplicial object of Segal's $K$-theory construction on $P$ and the Lack--Paoli $2$-nerve of the bicategory $\mathcal P$ with one object associated to $P$. 
\end{theorem*}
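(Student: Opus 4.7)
The plan is to build the comparison $\KP \to \NP$ levelwise, then check that these pieces assemble into a simplicial map and are equivalences at each level. Recall that $\KP_n$, being $KP$ evaluated at $\ord{n}$, has as objects the Segal data $(a_S, \alpha_{S,T})$: a family of objects $a_S \in P$ indexed by subsets $S \subseteq \{1,\ldots,n\}$ with $a_\emptyset$ the monoidal unit, together with coherent isomorphisms $\alpha_{S,T}\colon a_S \otimes a_T \to a_{S \sqcup T}$ for disjoint $S,T$; morphisms are families of arrows in $P$ intertwining the $\alpha$'s. The Lack--Paoli $2$-nerve $\NP_n$ has as objects the normal homomorphisms $[n] \to \mathcal{P}$, which unravel as families $(f_{ij})_{0\leq i\leq j\leq n}$ with $f_{ii}$ the unit together with invertible $\phi_{ijk}\colon f_{jk} \otimes f_{ij} \to f_{ik}$ satisfying a cocycle identity.

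I would define the comparison $\Phi_n \colon \KP_n \to \NP_n$ on objects by $f_{ij} := a_{(i,j]}$ and $\phi_{ijk} := \alpha_{(j,k],(i,j]}$, where $(i,j] := \{i+1,\ldots,j\}$, and on morphisms by restriction to interval subsets. The cocycle identity for $\phi_{ijk}$ reduces to the associativity axiom for $\alpha$, and normality $f_{ii} = I$ holds automatically since $(i,i] = \emptyset$. To verify that the $\Phi_n$ assemble into a map of simplicial objects, I would identify the canonical functor $\Dop \to \Gamma$ used to form the underlying simplicial object of $KP$ and check that its face and degeneracy operators act on interval-indexed data exactly as they do on $\NP$, with the $\alpha_{S,T}$ providing the composition of adjacent intervals that corresponds to the simplicial faces.

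For each $n$, essential surjectivity of $\Phi_n$ uses the Picard structure on $P$ to extend any $(f_{ij}, \phi_{ijk})$ to Segal data by setting $a_S := \bigotimes_{i \in S} f_{(i-1)i}$ in a fixed order, with $\alpha_{S,T}$ given by the canonical symmetric monoidal coherence; the Segal axioms then follow from Mac Lane's coherence theorem. Full faithfulness uses that a morphism in $\KP_n$ is determined by its components on singletons $\{i\}$ together with $\alpha$-naturality, and singletons are exactly the intervals $(i-1,i]$ already tracked by $\Phi_n$. The main obstacle is the coherence bookkeeping that reconciles the subset-indexed Segal perspective with the linearly-indexed Lack--Paoli one; I would handle this by first replacing $\KP_n$ with its equivalent full subcategory spanned by those $(a_S)$ of the form $\bigotimes_{i \in S} a_{\{i\}}$ with canonical coherences, reducing both the equivalence check and the simplicial naturality to routine diagram chases.
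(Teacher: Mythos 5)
Your $\Phi_n$ is precisely the paper's forgetful functor $\mathcal U_n\colon \KP\ord{n}\to \NP[n]$ (restrict the subset-indexed $K$-theory data to intervals), your naturality check is the paper's computation that $\phi(\alpha)^{-1}$ of an interval $[r+1,s]$ is again the interval $[\alpha(r)+1,\alpha(s)]$, and your levelwise equivalence is established in essentially the same way, the only variation being that the paper constructs an explicit quasi-inverse $\mathcal F_n$ by tensoring along the \emph{maximal convex decomposition} of each subset (so that $\mathcal U_n\mathcal F_n=\id$ strictly, leaving a single natural isomorphism to check) rather than arguing essential surjectivity and full faithfulness via tensor products of singletons. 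Two minor caveats that do not affect the argument: the simplicial operators on both $U\KP$ and $\NP$ act by pure reindexing (no composition through the structure isomorphisms $\alpha_{S,T}$ enters, since all composites are already part of the data), and the proposed ``strict'' full subcategory of objects of the form $\bigotimes_{i\in S}a_{\{i\}}$ is not closed under these operators in a non-strict Picard category, so it should not be relied on for the simplicial naturality check.
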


Finally, we follow the lines of Lack--Paoli's discussion of the $2$-nerve to recover a version of the stable homotopy hypothesis for Picard categories. Since the first version of this article appeared, a different proof of a version of this result appeared in \cite{SharmaPicard}, based on the previous work \cite{SharmaSymMonCat} of the author.

\begin{cor*}[\cref{CorollaryPicard}]
The equivalence between the homotopy category of Picard--Tamsamani $1$-categories and the homotopy category of stable $1$-types specializes to an equivalence between the homotopy categories of Picard categories and stable $1$-types.
\end{cor*}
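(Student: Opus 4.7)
The plan is to combine \cref{StableHH} at $n=1$, which establishes the equivalence between the homotopy category of Picard--Tamsamani $1$-categories and the homotopy category of stable $1$-types, with \cref{KvsNerveThm}, which compares Segal's $K$-theory $\KP$ of a Picard category $P$ with the Lack--Paoli $2$-nerve $\NP$ of the associated one-object bicategory $\mathcal{P}$. The strategy is to realize Picard categories as a homotopically equivalent full subcategory of Picard--Tamsamani $1$-categories via one of these two constructions, and then transport the equivalence of \cref{StableHH}.

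First, I would define the comparison functor sending a Picard category $P$ to $\KP$, and verify that this $\Gamma$-object satisfies the Segal-type and Picard/invertibility conditions needed to land in Picard--Tamsamani $1$-categories, and that symmetric monoidal equivalences between Picard categories are sent to weak equivalences in the target. By \cref{KvsNerveThm}, the same homotopical data is captured by $\NP$; the $2$-nerve perspective is better suited for comparison arguments on morphisms and essential images, as in Lack--Paoli's original treatment of bicategories.

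The main step, and the one I expect to be the central obstacle, is essential surjectivity on homotopy categories: every Picard--Tamsamani $1$-category should be equivalent to one coming from a Picard category. Here I would adapt the Lack--Paoli strategy that rigidifies a suitable Tamsamani $2$-category into a bicategory, and combine it with a compatible rigidification of the $\Gamma$-structure in order to produce a symmetric monoidal groupoid. The Picard condition on the $\Gamma$-object should then force this symmetric monoidal groupoid to have invertible objects, i.e.\ to be a Picard category. The delicate part is tracking the symmetric monoidal coherence data through the strictification, and checking that it matches exactly the Picard axioms rather than something weaker.

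Once this is in place, fullness and faithfulness on the homotopy categories would follow by transferring the local equivalence statement for the Lack--Paoli $2$-nerve across \cref{KvsNerveThm}, together with the observation that symmetric monoidal morphisms between Picard categories are detected by the induced maps of $\Gamma$-objects. Composing the resulting equivalence between the homotopy category of Picard categories and the homotopy category of Picard--Tamsamani $1$-categories with \cref{StableHH} at $n=1$ would then yield \cref{CorollaryPicard}.
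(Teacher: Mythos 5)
Your overall route is the paper's: compare $\Pic$ with $\PicGTam$ via Segal's $K$-theory, use \cref{KvsNerveThm} to see that $\KP$ is special (which, together with the group condition on $\pi_0\KP\ord{1}$, makes it a Picard--Tamsamani $1$-category, exactly as in \cref{KtheoryPic}), recover a Picard category from a Picard--Tamsamani $1$-category by a $\Gamma$-compatible version of Lack--Paoli's rigidification, and then compose with \cref{StableHH} at $n=1$. The step you single out as the central obstacle is precisely the paper's construction of the functor $\mathcal M\colon \PicGTam\to\Pic$ in \cref{Construction_M}: the underlying groupoid is $\XX\ord{1}$, the tensor product, unitors, symmetry and associator are produced from a chosen inverse of the Segal map $S_2$ together with the $\Gamma$-maps $m$, $\iota_j$, $\tau$, and invertibility of objects comes from the very special condition. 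So that part of your plan is on target, and your worry about coherence is resolved there by the fact that the $\Gamma$-structure supplies the symmetry data directly, with no separate strictification of monoidal coherence needed.

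Where you diverge, and where your plan is weakest, is the morphism level. The paper does not argue ``essentially surjective plus fully faithful on homotopy categories'': hom-sets in the localizations $\Ho(\Pic)$ and $\Ho(\PicGTam)$ are not directly accessible, and Lack--Paoli's local statement concerns icons between normal homomorphisms, with no symmetric monoidal or $\Gamma$-data, so it does not transfer formally across \cref{KvsNerveThm} to give full faithfulness after localization; likewise, ``symmetric monoidal morphisms are detected by maps of $\Gamma$-objects'' is exactly what has to be proved, since a general map $\KP\to\mathcal{K}P'$ of $\Gamma$-objects is not on the nose induced by a monoidal functor. The paper instead makes the rigidification functorial: $\mathcal M$ is defined on all maps of $\Gamma$-objects (a fixed choice of Segal inverses ensures compatibility with composition), one checks $\mathcal M\mathcal K\cong\id_{\Pic}$, and one builds a natural transformation $\zeta\colon \id_{\PicGTam}\Rightarrow \mathcal K\mathcal M$ which is a levelwise equivalence; these two facts immediately yield the equivalence $\Ho(\Pic)\simeq\Ho(\PicGTam)$ of \cref{PicPicTam}, and \cref{CorollaryPicard} follows by composing with \cref{StableHH}. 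Your proposal is repairable along the same lines: the rigidification you already plan for essential surjectivity must be carried out naturally in $\XX$ and on all morphisms of $\Gamma$-objects, which then replaces, and subsumes, your full-faithfulness-by-transfer step.
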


The structure of this paper is as follows. In \cref{Sec:TamCat}, we recall the definitions and properties of Tamsamani $n$-categories and how they relate to unstable $n$-types. In \cref{Sec:Spectra}, we give an overview of Bousfield--Friedlander spectra which we use to model stable types, as well as of their variant of Segal's theory of $\Gamma$-spaces. In \cref{Sec:SHH}, we prove our main theorem. In the last two sections, we relate this theorem to previous work, proving the two mentioned results for Picard categories. 

\subsection*{Acknowledgements} 
This project arose as a part of the Women in Topology III program. We would like to thank the organizers for making this collaboration possible. We would also like to thank the Hausdorff Research Institute for Mathematics, NSF-DMS 1901795, NSF-HRD 1500481 - AWM ADVANCE grant and Foundation Compositio Mathematica for supporting this program. Our special thanks go to Ang\'{e}lica Osorno, to whom we owe the original impetus of this project, and who supported us throughout the work on it. The fifth-named author acknowledges the financial support of an international Macquarie University Research Excellence Scholarship. 

\section{Tamsamani categories}
\label{Sec:TamCat}
In this section, we recall Tamsamani's definition of weak $n$-category. This is the model of weak higher categories we are going to work with throughout the paper. The definition is inductive in nature and requires several preparations. Along the way, we collect facts about Tamsamani categories needed in this paper. In particular, we will be interested in the notion of equivalences of Tamsamani categories, which generalizes the notion of equivalences of categories. We do not make any claim of originality in this part. For a more thorough treatment, we refer the reader e.g.\ to \cite{Tamsamani}, \cite{Simpson}, \cite{Paoli-2019}. 

\subsection{Notational conventions}
Tamsamani categories are multi-simplicial objects, and we first need to fix some notation for these. As usual, $\Delta$ will denote the simplex category, whose objects are non-empty, finite, totally ordered sets $[n]=\{0,\dots,n\}$, and whose arrows are order preserving maps between them. To avoid confusion with several copies of the simplex category, we denote by $\Delta[n]$ the simplicial set represented by $[n]$.

For each non-negative integer $n$, we write $d^i\colon [n-1]\to [n]$ for generating face maps and $s^i\colon [n+1]\to [n]$ for generating degeneracy maps for all $0\leq i\leq n$.

We write $\Delta^n$ for the category $\Delta\times\cdots\times \Delta$ of $n$ copies of the simplex category. Its objects, tuples $([m_1], \cdots,  [m_n])$, will be denoted by $M=(m_1,\dots,m_n)$. Its arrows, $\alpha\colon M\to M^\prime$, are $n$-tuples $\alpha=(\alpha_1,\dots,\alpha_n)$ with $\alpha_k\colon [m_k]\to [m^\prime_k]$ in $\Delta$ for every $1\leq k\leq n$.

Let $\Dnop$ denote the product of $n$ copies of $\Dop$. Given a category $\C$, the functor category $[\Dsop{n},\C]$ is called the category of $n$-fold simplicial objects in $\C$ (or multi-simplicial objects for short).

If $X\in [\Dsop{n},\C]$ and $\underline{k}=(k_1,\ldots,k_n)\in \Dnop$, we shall write $X_{k_1, \ldots, k_n}$ for the value of $X$ at this object. 

When we set $\C$ to be the category $\set$ of sets, the category $[\Dsop{n}, \set]$ is the category of $n$-fold simplicial sets.

In order to efficiently handle the relationship between Tamsamani models for different dimensions, we introduce a special notational convention as follows.

Let $d\colon\set\to[\Dop, \set]$ be the discrete simplicial set functor. Clearly $d$ is fully faithful, and therefore so is the induced functor
\begin{equation*}
  d_*\colon [\Dsop{n-1}, \set]\to \funcat{n-1}{\funcat{}{\set}}
\end{equation*}
with $(d_* X)_{\seqc{k}{1}{n-1}}=d X_{\seqc{k}{1}{n-1}}$ for all $(\seqc{k}{1}{n-1})\in\Dsop{n-1}$. There is an isomorphism 
\begin{equation*}
  \funcat{n-1}{\funcat{}{\set}}\cong \funcat{n}{\set}
\end{equation*}
associating to $Y\in \funcat{n-1}{\funcat{}{\set}}$ the $n$-fold simplicial set taking $(\seqc{k}{1}{n})\in \Dsop{n}$ to $(Y_{k_1,\ldots, k_{n-1}})_{k_n}$. Hence the composite functor
\begin{equation}\label{eq-multsimpsets-1R}
  \funcat{n-1}{\set}\xrightarrow{d_*}\funcat{n-1}{\funcat{}{\set}}\cong \funcat{n}{\set}
\end{equation}
is fully faithful. This property justifies the following convention.

\begin{convention}\label{conv-multsimpsets-1R}
We identify $\funcat{n-1}{\set}$ with its essential image in $\funcat{n}{\set}$ under the functor \eqref{eq-multsimpsets-1R}. In other words, we identify $(n-1)$-fold simplicial sets with those $n$-fold simplicial sets $X$ for which the simplicial set $X_{\seqc{k}{1}{n-1}}$ is discrete for all $(\seqc{k}{1}{n-1})\in\Dsop{n-1}$. 
\end{convention}

To encode composition in different directions in Tamsamani's model, one uses various Segal conditions. We recall what these conditions are and settle the notation for them. 

\begin{defn}\label{def-seg-map}
Let ${X\in\funcat{}{\C}}$ be a simplicial object in a category $\C$ with pullbacks. For each ${1\leq j\leq k}$ and $k\geq 2$, let ${\nu_j\colon X_k\to X_1}$ be induced by the map  $\nu^j\colon [1]\to[k]$ in $\Delta$ sending $0$ to ${j-1}$ and $1$ to $j$. Then the following diagram commutes:
\begin{equation*}
\begin{tikzcd}[>=stealth]
&&& X_k\ar[dll, "\nu_1"] \ar[d, "\nu_2"] \ar[drrrr, "\nu_k"] &&&&& \\
&X_1\ar[ld, "d_1" swap]\ar[rd, "d_0"]&&X_1\ar[ld, "d_1" swap]\ar[rd, "d_0"]&&\ldots &&X_1\ar[ld, "d_1" swap]\ar[rd, "d_0"]&\\
X_0 && X_0 && \ldots &&X_0&& X_0
\end{tikzcd}
\end{equation*}

If  ${\pro{X_1}{X_0}{k}}$ denotes the limit of the lower part of the diagram above, the \emph{$k$-th Segal map of $X$} is the unique map
$$
S_k\colon X_{k}~\to~\pro{X_{1}}{X_{0}}{k}
$$
such that ${\pr_j\circ S_k=\nu_{j}}$ where
${\pr_{j}}$ is the $j$-{th} projection.
\end{defn}

\subsection[The functor $\p{r}$ ]{The functor $\pmb{\p{r}}$} \label{Funp}
We recall some basic constructions from \cite[\textsection 2.2.1]{Paoli-2019}. 
Let $\cat$ be the category of small categories. There is a fully faithful nerve functor $N\colon \cat \to \funcat{}{\set}$ with a left adjoint $\cate$, often called the \emph{homotopy category} construction.

\begin{defn}\label{def-multsimpsets-1R-A}
Let $p\colon\funcat{}{\set} \to \set$ be obtained by applying $\cate$ to $X\in \funcat{}{\set}$ and then taking the set of isomorphism classes of the category $\cate X$.
\end{defn}

\noindent We can extend the functor $p$ as follows.

\begin{defn}\label{def-multsimpsets-1R}
Define inductively
\begin{equation*}
  p_n \colon\funcat{n}{\set} \to \set
\end{equation*}
by setting $p_1=p$ and, given $p_{n-1}$, by letting $p_n$ be the composite
\begin{equation}\label{eq-multsimpsets-2R}
\begin{array}{ll}
  \funcat{n}{\set} &\cong \funcat{n-1}{\funcat{}{\set}}\xrightarrow{p_*}\funcat{n-1}{\set} \xrightarrow{p_{n-1}}\set\;,\\
\end{array}
\end{equation}
where the isomorphism on the left-hand side of \eqref{eq-multsimpsets-2R} associates to $X\in\funcat{n}{\set}$ the object of $\funcat{n-1}{\funcat{}{\set}}$ taking $(k_1, \ldots, k_{n-1})\in \Dsop{n-1}$ to the simplicial set $X_{k_1, \ldots, k_{n-1}}$.
\end{defn}

\begin{convention}\label{conv-multsimpsets-2R}
Under the embedding
\begin{equation*}
  \funcat{n-1}{\set}\hookrightarrow \funcat{n}{\set}
\end{equation*}
of \cref{conv-multsimpsets-1R}, we see from \cref{def-multsimpsets-1R} that the following diagram commutes:
\begin{equation*}
\begin{tikzcd}
\funcat{n-1}{\set} \ar[rr, hook] \ar[drr, "{p_{n-1}}" swap] && \funcat{n}{\set} \ar[d, "{p_n}"] \\
&& \set
\end{tikzcd}
\end{equation*}
Consequently, no ambiguity can arise by dropping subscripts in the definition of $p_n$ and simply writing
\begin{equation}\label{eq-multsimpsets-3R}
   p\colon  \funcat{n}{\set} \to \set\;.
\end{equation}
\end{convention}

We now extend the definition of $p$ as follows.

\begin{defn}\label{def-multsimpsets-2R}
Let $p\colon\funcat{n}{\set}\to \set$ be as in \eqref{eq-multsimpsets-3R}. For each $0\leq r\leq n-1$, define
\begin{equation*}
  \p{r}\colon\funcat{n}{\set}\to \funcat{r}{\set}
\end{equation*}
by $\p{0}=p$ and, for $1\leq r\leq n-1$, $(k_1, \ldots, k_r)\in\Dsop{r}$, and $X\in\funcat{n}{\set}$, we set
\begin{eqnarray*}
(\p{r}X)_{k_1, \ldots, k_r} &=& p (X_{k_1, \ldots, k_r})\;, 
\end{eqnarray*}
and similarly for the morphisms.
\end{defn}

\begin{rmk}\label{rem-multsimpsets-1R}
  Using  \cref{conv-multsimpsets-1R}, we have a commuting diagram for each $0\leq r \leq n-1$
\begin{equation*}
\begin{tikzcd}
 \funcat{n-1}{\set} \ar[rr, hook] \ar[drr, "{\p{r}}" swap] && \funcat{n}{\set} \ar[d, "{\p{r}}"] \\
 && \funcat{r}{\set}
\end{tikzcd}
\end{equation*}
In the definition of $\p{r}$ we can therefore drop explicit mention of the source dimension $n$.
\end{rmk}

The following lemma establishes some elementary properties of the functor $\p{r}$.

\begin{lem}[{\cite[Lemma 2.2.7]{Paoli-2019}}]\label{lem-multsimpsets-2R} 
For each $0\leq r \leq n-1$, let $\p{r}$ be as in \cref{def-multsimpsets-2R}. Then 
\begin{enumerate}[label=\alph*)]
  \item For each $[s]\in\Dop$,
\begin{equation*}
  (\p{r}X)_s = \p{r-1}X_s. 
\end{equation*}

  \item For each $0\leq r\leq n-1$, the functor $\p{r}\colon\funcat{n}{\set}\to \funcat{r}{\set}$ factors as
  \begin{equation*}
    \funcat{n}{\set}\xrightarrow{\p{n-1}}\funcat{n-1}{\set} \xrightarrow{\p{n-2}} \funcat{n-2}{\set} \cdots \xrightarrow{\p{r}}\funcat{r}{\set}
  \end{equation*}
\end{enumerate}
\end{lem}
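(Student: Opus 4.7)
The plan is to prove both parts by directly unwinding \cref{def-multsimpsets-2R}, with the key step in part (b) being a short induction on $n-r$ that relies on the recursive definition of $p_n$.

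For part (a), the computation is purely formal. Given $X\in\funcat{n}{\set}$, $[s]\in\Dop$, and $(k_2,\ldots,k_r)\in\Dsop{r-1}$, \cref{def-multsimpsets-2R} gives
\[
(\p{r}X)_{s,k_2,\ldots,k_r}\;=\;p(X_{s,k_2,\ldots,k_r})\;=\;p\bigl((X_s)_{k_2,\ldots,k_r}\bigr)\;=\;\bigl(\p{r-1}(X_s)\bigr)_{k_2,\ldots,k_r},
\]
where the middle equality is merely a regrouping of how an $n$-fold simplicial set is indexed, and on the right $X_s\in\funcat{n-1}{\set}$, so $\p{r-1}$ applies to it via \cref{rem-multsimpsets-1R}. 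The same check on morphisms is analogous, yielding the claimed identity of $(r-1)$-fold simplicial sets.

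For part (b), I would induct on the gap $n-r$. The base case $n-r=1$ is immediate. For the inductive step, the heart of the matter is to establish that $\p{r}\colon\funcat{n}{\set}\to\funcat{r}{\set}$ equals the composite
\[
\funcat{n}{\set}\xrightarrow{\p{n-1}}\funcat{n-1}{\set}\xrightarrow{\p{r}}\funcat{r}{\set};
\]
the inductive hypothesis applied to $\p{r}\colon\funcat{n-1}{\set}\to\funcat{r}{\set}$ then factors this second arrow further and produces the full chain of $\p{s}$'s. To verify this single-stage factorization, I would evaluate both sides at $X$ and at a multi-index $(k_1,\ldots,k_r)$. The left-hand side gives $p(X_{k_1,\ldots,k_r}) = p_{n-r}(X_{k_1,\ldots,k_r})$, since $X_{k_1,\ldots,k_r}$ is an $(n-r)$-fold simplicial set. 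The right-hand side, after iterated use of part (a), becomes $p\bigl(\p{n-r-1}(X_{k_1,\ldots,k_r})\bigr) = p_{n-r-1}\bigl(\p{n-r-1}(X_{k_1,\ldots,k_r})\bigr)$. These two agree by the defining recursion $p_{n-r} = p_{n-r-1}\circ p_*$ from \cref{def-multsimpsets-1R}, since $p_*$ applied to $X_{k_1,\ldots,k_r}$ is exactly $\p{n-r-1}(X_{k_1,\ldots,k_r})$.

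The only genuine obstacle is notational bookkeeping: \cref{rem-multsimpsets-1R} allows us to suppress source dimensions in the symbol $\p{s}$, but one must still verify at each step that the instance of $\p{s}$ being manipulated is being applied to a multi-fold simplicial set of the correct arity before invoking the convention. Once this is tracked carefully, the lemma is essentially a corollary of the inductive definition of $p_n$ and part (a).
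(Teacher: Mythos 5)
Your argument is correct: both parts follow exactly as you describe from \cref{def-multsimpsets-2R} together with the recursion $p_n = p_{n-1}\circ p_*$ of \cref{def-multsimpsets-1R}, with the arity bookkeeping you flag handled by \cref{conv-multsimpsets-2R} and \cref{rem-multsimpsets-1R}. The paper gives no proof of its own here — it cites \cite[Lemma 2.2.7]{Paoli-2019} — and your definitional unwinding of part (a) plus induction on $n-r$ for part (b) is precisely the standard argument behind that citation, so there is nothing further to add.
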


\subsection{Defining Tamsamani $n$-categories}

The aim of this subsection is to actually define Tamsamani $n$-categories. The definition was first given in \cite{Tamsamani}. Our equivalent presentation follows the lines of \cite{Paoli-2019}. Note that the definition is inductive and does not make sense before we have proven that Tamsamani $(n-1)$-categories and equivalences of such satisfy certain properties. We dedicate the rest of this subsection to checking these properties. 

\begin{ClosureProp}\label{subs-clos-prop}
The Tamsamani model is a full subcategory of $\funcat{n}{\set}$ satisfying a number of closure properties, which we discuss below for future reference.

Let $\C \subset \funcat{n}{\set}$ be a full subcategory of $n$-fold simplicial sets. We consider the following closure properties
 of $\C$:
\begin{enumerate}[label=\textbf{C\arabic*:}, ref=\textbf{C\arabic*}, start=0]
 
  \item \label{C0} The subcategory $\C$ contains the terminal object. 
  \item \label{C1} Repletion under isomorphisms; that is, if $A\cong B$ in $\funcat{n}{\set}$ and $A\in \C$, then $B\in \C$.

  \item \label{C2} Closure under finite products.

  \item \label{C3} Closure under small coproducts.

  \item \label{C4} If the small coproduct $\underset{i}{\amalg} A_i$ is in $\C$, then each $A_i\in\C$.
\end{enumerate}
\end{ClosureProp}

Once the closure properties above hold, we get the following properties for free. These properties are precisely what we need in order to make sense of Segal-type conditions for Tamsamani $n$-categories. 
\begin{lem}[{\cite[Lemma 2.2.8]{Paoli-2019}}]\label{lem-multsimpsets-3R} 
Let $\C$ be a full subcategory of $\funcat{n}{\set}$ satisfying the closure properties \cref{C0,C1,C2,C3,C4}. Then: 

\begin{enumerate}
  \item Every discrete $n$-fold simplicial set is an object of $\C$.

  \item If $X$ is a discrete $n$-fold simplicial set and $f\colon E\to X$ is a morphism in $\C$, then for each $x\in X$ the
      fiber $E_x$ of $f$ at $x$ is in $\C$.

  \item If $A\xrightarrow{f} X  \xleftarrow{g} B$ is a diagram in $\C$ with $X$ discrete, then $A\tiund{X}B\in\C$.
\end{enumerate}
\end{lem}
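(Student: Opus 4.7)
The plan is to exploit the coproduct decomposition of a discrete $n$-fold simplicial set to reduce all three claims to the listed closure properties, in the order they are stated.

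For (1), I would observe that any discrete $n$-fold simplicial set $X$ (in the sense of \cref{conv-multsimpsets-1R}) can be written as
\[
X \;\cong\; \coprod_{x \in X_{0,\ldots,0}} *\,,
\]
where $*$ is the terminal $n$-fold simplicial set, since limits and colimits in $\funcat{n}{\set}$ are computed levelwise and discreteness means all structure maps are identities. Property \ref{C0} puts $* \in \C$, \ref{C3} puts the coproduct in $\C$, and \ref{C1} handles repletion along the isomorphism.

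For (2), with $X$ discrete I would check that any $f\colon E \to X$ in $\funcat{n}{\set}$ induces a coproduct decomposition of $E$. At each multi-index $(k_1,\ldots,k_n)$, the map $f_{k_1,\ldots,k_n}$ partitions $E_{k_1,\ldots,k_n}$ into fibers over the points of $X_{k_1,\ldots,k_n} = X_{0,\ldots,0}$; because all structure maps of $X$ are identities, the structure maps of $E$ preserve these fibers, yielding a levelwise decomposition
\[
E \;\cong\; \coprod_{x \in X_{0,\ldots,0}} E_x
\]
in $\funcat{n}{\set}$, where $E_x$ is the fiber at $x$. Since $E \in \C$, property \ref{C4} (plus \ref{C1}) gives each $E_x \in \C$.

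For (3), I would combine the previous two parts. Writing $X \cong \coprod_x *$ as in (1) and applying (2) to both $f$ and $g$, we get $A \cong \coprod_x A_x$ and $B \cong \coprod_x B_x$ with each $A_x, B_x \in \C$. Pullbacks commute with coproducts in the presheaf category $\funcat{n}{\set}$, so
\[
A \tiund{X} B \;\cong\; \coprod_{x \in X_{0,\ldots,0}} A_x \times B_x\,.
\]
Each factor $A_x \times B_x$ is in $\C$ by \ref{C2}, the coproduct is in $\C$ by \ref{C3}, and repletion \ref{C1} concludes.

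The only real obstacle is bookkeeping: one must carefully justify that the coproduct decompositions induced by a map to a discrete object are compatible with the full $n$-fold simplicial structure, and that products and pullbacks distribute over coproducts levelwise. Both facts are standard for presheaf categories and require no tools beyond the levelwise computation of (co)limits, so the argument is essentially a formal unwinding of the closure properties; no induction on $n$ is needed.
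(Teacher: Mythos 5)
Your proof is correct and follows essentially the same route as the argument the paper defers to in \cite[Lemma 2.2.8]{Paoli-2019}: write a discrete object as a coproduct of copies of the terminal object, decompose any object over it into its fibers, and deduce each claim from \cref{C0,C1,C2,C3,C4} together with the distribution of pullbacks over coproducts. Nothing further is needed, and indeed no induction on $n$ is required.
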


We define the category $\Tam^n\subset\funcat{n}{\set}$ and $n$-equivalences by induction on $n$. When $n=0$, we have $ \Tam^{0}=\set$ and $0$-equivalences are bijections. When $n=1$, $ \Tam^{1}=\cat\hookrightarrow \funcat{}{\set}$ and 1-equivalences are equivalences of categories.

\begin{indhyp} \label{Inductive hypothesis}
Suppose, inductively, that we defined for each $1\leq k\leq n-1$ a full subcategory
\begin{equation*}
   \Tam^{k}\subset \funcat{k-1}{\cat}\subset \funcat{k}{\set}
\end{equation*}
containing the terminal object and a class $\clW_k$ of maps in $ \Tam^{k}$ (called \emph{$k$-equivalences}) such that

\begin{enumerate}[label=\textbf{I\arabic*:}, ref=\textbf{I\arabic*}]
  \item\label{I1} $ \Tam^{k}$ satisfies the closure properties \cref{C0,C1,C2,C3,C4}, 

  \item\label{I2} The functor $\p{k-1}\colon\funcat{k}{\set}\to\funcat{k-1}{\set}$ of \cref{def-multsimpsets-2R} restricts to a functor $\p{k-1}\colon \Tam^{k}\to \Tam^{k-1}$ which sends $k$-equivalences to $(k-1)$-equivalences. 

  \item\label{I3} $\clW_k$ is closed under composition with isomorphisms, it is closed under finite products and small colimits and, if the small colimit $\underset{i}{\amalg}f_i$ of maps in $\Tam_k$ is in $ \clW_{k}$, then each $f_i\in \clW_{k}$. 
\end{enumerate}
\end{indhyp}

\begin{defn} \label{DefTamsamani}
An object $\X$ of $\funcat{n-1}{\cat}\subset\funcat{n}{\set}$ is a \emph{Tamsamani $n$-category} if
\begin{enumerate}[label=\alph*)]
  \item $\X_0$ is discrete. 

  \item $\X_s\in \Tam^{n-1}$ for all $s>0$.

  \item For all $s\geq 2$, the Segal maps $\X_s\to\pro{\X_1}{\X_0}{s}$ are $(n-1)$-equivalences. 
\end{enumerate}
\end{defn}

Note that, since $ \Tam^{n-1}$ satisfies the closure properties \cref{C0,C1,C2,C3,C4}, by \cref{lem-multsimpsets-3R}, the iterated pullback $\pro{\X_1}{\X_0}{s}$ is in $\Tam^{n-1}$, so that the last condition in the definition actually makes sense. The morphisms of Tamsamani $n$-categories are just morphisms between underlying multi-simplicial sets. 

To satisfy the inductive step, we check first that the functor $\p{n-1}\colon\funcat{n}{\set}\to\funcat{n-1}{\set}$ of \cref{def-multsimpsets-2R} restricts to a functor $\p{n-1}\colon\Tam^n\to \Tam^{n-1}$. Since $\p{n-2}$ preserves pullbacks over discrete objects (as the same is true for $p$), we have
      \begin{equation*}
        \p{n-1}(\pro{\X_1}{\X_0}{s}) \cong \pro{\p{n-1}\X_1}{\p{n-1}\X_0}{s}
      \end{equation*}
      and, by hypothesis, $\p{n-2}$ sends $(n-1)$-equivalences to $(n-2)$-equivalences. Therefore the Segal maps, being $(n-1)$-equivalences, give rise to $(n-2)$-equivalences
      \begin{equation*}
        \p{n-2}\X_s\to \pro{\p{n-2}\X_1}{\p{n-2}\X_0}{s}\;.
      \end{equation*}
      This shows that $\p{n-1}\X\in \Tam^{n-1}$.

Next, we define the $(n-1)$-categories of morphisms in a Tamsamani $n$-category. This will allow us to define $n$-equivalences, roughly speaking, as functors which are fully faithful and essentially surjective.  

\begin{defn} Let $\X$ be a Tamsamani $n$-category. 
Given $a,b\in \X_0$, the \emph{$(n-1)$-category of morphisms} $\X(a,b)$ is defined to be the fiber at $(a,b)$ of the map $(d_1,d_0)\colon \X_1\to\X_0\times \X_0$.
\end{defn}

Since, by inductive hypothesis, $ \Tam^{n-1}$ satisfies \cref{C0,C1,C2,C3,C4}, by \cref{lem-multsimpsets-3R} we have $\X(a,b)\in \Tam^{n-1}$. One should think of $\X(a,b)\in \Tam^{n-1}$ as a $\hom\mi(n-1)$-category. 

\begin{defn}
We define a map $f\colon \X\to \Y$ in $\Tam^n$ to be an \emph{$n$-equivalence} if
\begin{rome}
  \item For all $a,b\in \X_0$, the induced morphism $f(a,b)\colon \X(a,b)\to \Y(fa,fb)$ is an $(n-1)$-equivalence. 

  \item $\p{n-1} f$ is an $(n-1)$-equivalence.
\end{rome}
\end{defn}

To complete the inductive step in the definition of $\Tam^n$, we need to check that $\Tam^n$ satisfies the inductive hypotheses \cref{I1,I2,I3} at step $n$. 

\begin{lem}\label{CompletingInductiveHypothesis}
The subcategory $\Tam^n\subset\funcat{n-1}{\cat}\subset \funcat{n}{\set}$ satisfies the inductive hypotheses \cref{I1,I2,I3} at step $n$.
\end{lem}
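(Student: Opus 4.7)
The plan is to verify each of \cref{I1,I2,I3} at step $n$ in turn, using the corresponding statements at step $n-1$ and the fact that all of the defining conditions for Tamsamani $n$-categories are formulated levelwise in the outermost simplicial direction.

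For \cref{I1}, I would check the closure conditions C0--C4 componentwise. Condition C0 is immediate since the terminal object is levelwise terminal, has discrete $0$-th level, and its Segal maps are identities. C1 follows because discreteness of $\X_0$, the outer $s$-th level lying in $\Tam^{n-1}$, and the Segal maps being $(n-1)$-equivalences are all invariant under isomorphism (the last one using closure of $(n-1)$-equivalences under composition with isomorphisms from \cref{I3} at step $n-1$). For C2 and C3, I compute the structure levelwise: discreteness of the $0$-th level is preserved by products and coproducts in $\set$, the outer levels lie in $\Tam^{n-1}$ by \cref{I1} at step $n-1$, and the Segal map becomes a product (resp.\ coproduct) of Segal maps, which is an $(n-1)$-equivalence by \cref{I3} at step $n-1$. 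For coproducts one additionally needs that iterated pullbacks over a discrete set distribute over coproducts, which uses disjointness of coproducts in $\set$. Condition C4 is handled analogously, combining C4 at step $n-1$ with the coproduct-detection property of $\clW_{n-1}$.

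For \cref{I2}, let $f\colon \X\to \Y$ be an $n$-equivalence. The fact that $\p{n-1}$ lands in $\Tam^{n-1}$ has already been established, so I only need $\p{n-1}f$ to be an $(n-1)$-equivalence. Two things must be checked. First, the hom-maps: since $\cate$, hence $p$, preserves pullbacks over discrete objects, the hom-construction commutes with $\p{n-1}$, giving $(\p{n-1}\X)(a,b)\cong \p{n-2}(\X(a,b))$; then applying \cref{I2} at step $n-1$ to the $(n-1)$-equivalence $\X(a,b)\to\Y(fa,fb)$ yields the desired $(n-2)$-equivalence. Second, by the factorization in \cref{lem-multsimpsets-2R}, $\p{n-2}(\p{n-1}f)=\p{n-2}f$, which is an $(n-2)$-equivalence by applying \cref{I2} at step $n-1$ to the $(n-1)$-equivalence $\p{n-1}f$ supplied by the definition of $n$-equivalence.

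For \cref{I3}, closure under composition with isomorphisms is immediate. Closure under finite products and under small colimits of $n$-equivalences reduces, via the commutation of both the hom-construction and $\p{n-1}$ with products respectively with colimits over discrete bases, to the analogous closure properties for $\clW_{n-1}$; here the \cref{I2} case just established feeds back to handle the $\p{n-1}$-condition. Coproduct detection is similar: if $\coprod_i f^i$ is an $n$-equivalence, the hom-maps and $\p{n-1}f^i$ for each summand arise as summands of the corresponding constructions for the coproduct, and the detection property at step $n-1$ gives the conclusion for each $f^i$.

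The main technical obstacle throughout is the handling of coproducts: verifying C3 and C4 in \cref{I1} and the colimit parts of \cref{I3} all rest on the compatibility of coproducts with the Segal pullbacks $\pro{\X_1}{\X_0}{s}$ and with the functors $\p{r}$. Both compatibilities ultimately reduce to disjointness of coproducts in $\set$ and preservation of pullbacks over discrete objects by $\cate$, and these are the points where extra care is required.
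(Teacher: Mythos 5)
Your proposal is correct in substance, but it takes a genuinely more self-contained route than the paper. The paper's proof consists of two observations: for \cref{I2}, the restriction $\p{n-1}\colon\Tam^n\to\Tam^{n-1}$ was already verified in the text right after \cref{DefTamsamani}, and the statement that $\p{n-1}$ sends $n$-equivalences to $(n-1)$-equivalences is literally condition (ii) of the definition of $n$-equivalence, so nothing remains to be proved; for \cref{I1} and \cref{I3} the paper simply invokes \cite[Prop.\ 6.1.6]{Paoli-2019}, applied with $\C_{n-1}=\Tam^{n-1}$ and $\C_n=\Tam^n$. You instead re-derive the content of that cited proposition by hand, and your levelwise verification of \cref{C0,C1,C2,C3,C4} and of the closure properties of $n$-equivalences correctly isolates the two facts everything rests on: disjointness of coproducts in $\set$ (so the Segal pullbacks $\pro{\X_1}{\X_0}{s}$ over discrete objects commute with coproducts) and the preservation of such pullbacks and of coproducts by $p$. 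The paper's approach buys brevity; yours buys a self-contained argument at the price of reproving an external result. Two caveats. First, your treatment of \cref{I2} is redundant rather than wrong: you set out to show that $\p{n-1}f$ is an $(n-1)$-equivalence and, in the second step, invoke exactly that fact as ``supplied by the definition of $n$-equivalence''; since it is indeed part of the definition, the entire two-step check (including the hom-compatibility $(\p{n-1}\X)(a,b)\cong\p{n-2}(\X(a,b))$) can simply be deleted. Second, in \cref{I3} the phrase ``small colimits'' must be read as small coproducts, as the notation $\amalg_i f_i$ and \cite{Paoli-2019} indicate: the hom-fibre construction commutes with coproducts over discrete objects but not with general colimits, and already for $n=1$ equivalences of categories are not closed under arbitrary small colimits (e.g.\ pushouts), so your reduction to step $n-1$ only goes through in the coproduct form --- which is the form actually needed.
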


\begin{proof}
The property \cref{I2} has already been checked above, and the fact that $\p{n-1}$ sends $n$-equivalences to $(n-1)$-equivalences is part of the definition of $n$-equivalence in $\Tam^n$.

Finally, \cref{I1} and \cref{I3} follow by \cite[Prop.\ 6.1.6]{Paoli-2019}, taking $\C_{n-1}= \Tam^{n-1}$ with $\clW_{n-1}$ the $(n-1)$-equivalences and $\C_{n}=\Tam^n$ with $\clW_{n}$ the $n$-equivalences. 
\end{proof}

To illustrate the concept, we spell out what it means to be a Tamsamani $2$-category.

\begin{ex}\label{example 1} 
From the definition, $\X\in \Tam^{2}$ consists of a simplicial object $\X\in\funcat{}{\cat}$ such that $\X_0$ is discrete and the Segal maps $\X_s\to\pro{\X_1}{\X_0}{s}$ are equivalences of categories. The functor $\p{1}\colon \Tam^{2}\to\cat$ associates to $\X\in \Tam^{2}$ the simplicial set taking $[k]\in\Dop$ to $p(\X_k)$; this simplicial set is the nerve of a category since, for each $k\geq 2$,
\begin{equation*}
p \X_k\cong p(\pro{\X_1}{\X_0}{k})\cong \pro{p(\X_1)}{p(\X_0)}{k}\;.
\end{equation*}

We will see in \cref{NP} that any bicategory provides us with an example of a Tamsamani $2$-category. Actually, by \cite{LackPaoli}, any Tamsamani $2$-category arises in this way up to an appropriate equivalence.
\end{ex}

In general, an $n$-equivalence of Tamsamani $n$-categories is not a levelwise $(n-1)$-equivalence, e.g.\ an equivalence of categories is not always a levelwise bijection on the nerves. However, it is helpful to identify the additional condition making an $n$-equivalence into a levelwise $(n-1)$-equivalence.

\begin{lem}[{\cite[Lemma 7.1.3]{Paoli-2019}}] \label{leveleqvsneq}
A map $f\colon \X\to \Y$ in $\Tam^n$ is a levelwise $(n-1)$-equivalence in $\Tam^{n-1}$ if and only if it is an $n$-equivalence and $pf_0\colon p\X_0\to p\Y_0$ is a bijection. 
\end{lem}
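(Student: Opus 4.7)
The plan is to prove both implications separately. The forward direction follows easily from the discreteness of $\X_0$ and $\Y_0$, while the converse — the main content — requires decomposing the hom-spaces as coproducts indexed by $\X_0$ and iterating via the Segal condition.

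For the forward direction, suppose $f$ is a levelwise $(n-1)$-equivalence. Since $\X_0$ and $\Y_0$ are discrete, $f_0$ is a bijection of sets, hence $pf_0 = f_0$ is bijective. To see that $f$ is an $n$-equivalence: the hom-map $f(a,b)$ appears as a summand of $f_1$ in the coproduct decomposition $\X_1 \cong \coprod_{(a,b) \in \X_0 \times \X_0} \X(a,b)$ (which exists because $\X_0$ is discrete), matched against the analogous decomposition of $\Y_1$ via the bijection $f_0 \times f_0$; hence, by the reflection part of \cref{I3}, each $f(a,b)$ is an $(n-1)$-equivalence. For the other condition, $(\p{n-1} f)_s = \p{n-2} f_s$ is an $(n-2)$-equivalence by \cref{I2}, so $\p{n-1} f$ is a levelwise $(n-2)$-equivalence in $\Tam^{n-1}$, which by induction on $n$ applied to the forward direction of the present lemma at level $n-1$ is an $(n-1)$-equivalence.

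For the converse, assume $f$ is an $n$-equivalence with $pf_0$ bijective, and verify that $f_s$ is an $(n-1)$-equivalence for each $s \geq 0$. For $s = 0$, $f_0 = pf_0$ is a bijection of discrete objects, hence an $(n-1)$-equivalence. For $s = 1$, discreteness of $\X_0$ yields $\X_1 \cong \coprod_{(a,b)} \X(a,b)$, and matching with the analogous decomposition of $\Y_1$ via the bijection $f_0$ exhibits $f_1$ as the coproduct of the $(n-1)$-equivalences $f(a,b)$; hence $f_1$ is an $(n-1)$-equivalence by closure under small coproducts (\cref{I3}). For $s \geq 2$, consider the square
\begin{equation*}
\begin{tikzcd}
\X_s \ar[r] \ar[d, "f_s" swap] & \pro{\X_1}{\X_0}{s} \ar[d] \\
\Y_s \ar[r] & \pro{\Y_1}{\Y_0}{s}
\end{tikzcd}
\end{equation*}
whose horizontal arrows are $(n-1)$-equivalences by the Segal condition of \cref{DefTamsamani}. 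Discreteness of $\X_0, \Y_0$ gives a decomposition $\pro{\X_1}{\X_0}{s} \cong \coprod_{(a_0, \ldots, a_s)} \X(a_0, a_1) \times \cdots \times \X(a_{s-1}, a_s)$, and the right vertical map is a coproduct of finite products of the $(n-1)$-equivalences $f(a_{i-1}, a_i)$, hence an $(n-1)$-equivalence by closure under products and coproducts in \cref{I3}. Two-out-of-three for $(n-1)$-equivalences, which propagates inductively from the base case of equivalences of categories, then forces $f_s$ to be an $(n-1)$-equivalence. The main obstacle is precisely this coproduct decomposition of iterated pullbacks over the discrete object $\X_0$, as it is the device that converts the pointwise $n$-equivalence data into levelwise $(n-1)$-equivalence data.
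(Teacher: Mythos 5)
The paper does not prove this lemma at all --- it imports it as \cite[Lemma 7.1.3]{Paoli-2019} --- so there is no in-paper argument to compare against; your proof is essentially the standard one (and, in outline, the one in the cited reference): decompose $\X_1$ and the iterated pullbacks $\pro{\X_1}{\X_0}{s}$ as coproducts over the discrete object $\X_0$, and play the hom-wise condition off against the Segal maps, with the forward direction's second condition handled by induction on $n$ via \cref{lem-multsimpsets-2R} and \cref{I2}. The argument is correct, with two small points you should tighten. First, in the forward direction, ``$f_0$ is a bijection'' is not literally immediate from discreteness: you need that an $(n-1)$-equivalence between discrete objects is a bijection, which follows by applying $p$ repeatedly using \cref{I2}. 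Second, and more importantly, your appeal to ``two-out-of-three for $(n-1)$-equivalences, which propagates inductively from the base case'' is overstated as a blanket claim: the cancellation ``$f$ and $g\circ f$ equivalences $\Rightarrow$ $g$ equivalence'' does \emph{not} propagate by a naive induction, since hom-wise one only controls the hom-objects at pairs in the image of $f$, and transporting along internal equivalences of objects requires a genuine change-of-basepoint argument. Fortunately your square only needs closure of $(n-1)$-equivalences under composition (not listed in \cref{I3}, which only gives composition with isomorphisms) and the other cancellation ``$g$ and $g\circ f$ equivalences $\Rightarrow$ $f$ equivalence''; both of these \emph{do} propagate by the straightforward induction you gesture at (hom-wise one only ever needs the hom-maps of $g$ at pairs of the form $(fa,fb)$, and $\p{n-1}$ is functorial), or can simply be quoted from the properties of $n$-equivalences established in \cite{Paoli-2019}. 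With the invoked instance of two-out-of-three stated precisely, the proof is complete.
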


\begin{rmk}\label{equiv_Tam_cats_woo}
Note that \cite[Lemma 7.1.3]{Paoli-2019} is actually formulated in more generality. In our case, $pf_0\colon p\X_0\to p\Y_0$ can be combined with the fact that both $\X_0$ and $\Y_0$ are discrete, so that an equivalent formulation would be to say that $f_0$ gives a bijection on underlying sets of $\X_0$ and $\Y_0$. In particular, if $\X,\Y\in\Tam^n$ are Tamsamani $n$-categories with one object, then a map $f\colon \X\to \Y$ is an $n$-equivalence if and only if it is a levelwise $(n-1)$-equivalence. 
\end{rmk}

The homotopy hypothesis requires a notion of $n$-groupoids in the given model of weak $n$-categories, and we introduce the corresponding notion next.

\begin{defn}\label{Definition GTa}
The full subcategory $ \GTam^{n}\subset \Tam^n$ of Tamsamani $n$-groupoids is defined inductively as follows.
For $n=0$, let $\GTam^0=\Tam^0=\set$ be the category of sets. For $n=1$, let $\GTam^{1}$ be the full subcategory $\gpd\subset \cat$ of groupoids. Suppose, inductively, that we defined the full subcategory $ \GTam^{n-1}\subset  \Tam^{n-1}$. We say that a Tamsamani $n$-category $\X$ is in $ \GTam^{n}$ if it satisfies
\begin{itemize}
  \item [i)] $\X_k\in \GTam^{n-1}$ for all $k\geq 0$. 

  \item [ii)] $\p{n-1}\X\in \GTam^{n-1}$.
\end{itemize}
\end{defn}

It follows immediately from this definition that the embedding $\Tam^n\hookrightarrow \funcat{n-1}{\cat}$ restricts to an embedding $ \GTam^{n}\hookrightarrow\funcat{n-1}{\gpd}$. If $\X\in \GTam^{n}$, then by definition $\X_1\in \GTam^{n-1}$, thus for each $a,b\in \X_0$, we see that $\X(a,b)\in \GTam^{n-1}$. 

\subsection{The functor $\pi_0$ for Tamsamani $n$-groupoids} \label{PropertiesTamsamani}
We define the functor $\pi_0$ as the restriction of the functor $p$, defined in \cref{Funp}, to Tamsamani $n$-groupoids. We will collect some basic properties of this functor which will be needed to prove the stable homotopy hypothesis. 
The following definition is just a relabelling and could have been done for all Tamsamani $n$-categories. However, in the case of groupoids, it coincides with the $\pi_0$ of the corresponding classifying space as we show in \cref{pi0_GTam_compatibility}.

\begin{defn} 
For each $n$, we define the functor 
\[
\pi_0\colon\GTam^n\to\set
\]
to be the restriction of the functor $p=p^{(0)}\colon\Tam^n\to\set$ of \cref{Inductive hypothesis}, which was in turn the restriction of the functor $p=p^{(0)}\colon \funcat{n}{\set}\to \set$ of \cref{def-multsimpsets-2R} to Tamsamani $n$-categories.
\end{defn}

To make this construction more explicit, let explain in words what $p\X$ is for a Tamsamani $n$-category $\X$. Similarly as in the case of bicategories, one can define a notion of internal equivalence in a Tamsamani $n$-category. Then $p$ is the functor that assigns to each Tamsamani $n$-category $\X\in\Tam^n$ the set of equivalence classes of objects of $\X$ with respect to internal equivalences. For a more detailed discussion, we refer the reader to \cite[\textsection 3]{Tamsamani}.

\begin{ex}
For $n=1$, recall that the equivalence classes are precisely the isomorphism classes of objects, so that $p\X$ for a Tamsamani $1$-category $\X$ is 
the set of isomorphism classes of objects in $\X$.

For $n=2$, it is known \cite{LackPaoli} that every Tamsamani $2$-category comes from a bicategory, as we will recall in \cref{NP} in detail. For a Tamsamani $2$-category $\X$, the set $p\X$ gives precisely the objects modulo equivalences in $\X$. Indeed, after applying $p^{(1)}$ to $\X$, we obtain the category whose morphisms are isomorphism classes of old $1$-morphisms, and isomorphisms in this category are precisely the equivalences in the original bicategory. 
\end{ex} 

\begin{lem}
The functor $p\colon \Tam^n\to \set$ preserves finite products, and hence so does the functor $\pi_0\colon \GTam^n\to \set$.
\end{lem}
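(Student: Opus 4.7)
The plan is to proceed by induction on $n$, using the factorization of $p$ provided by \cref{lem-multsimpsets-2R}(b). For the base case $n=1$, one has $\Tam^1=\cat$ and $p$ sends a category to its set of isomorphism classes of objects; since an isomorphism in $\mathcal{C}\times\mathcal{D}$ is precisely a pair of isomorphisms in $\mathcal{C}$ and $\mathcal{D}$, the canonical comparison map $p(\mathcal{C}\times\mathcal{D})\to p(\mathcal{C})\times p(\mathcal{D})$ is a bijection.

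For the inductive step, I would assume that $p\colon \Tam^{n-1}\to\set$ preserves finite products. By \cref{lem-multsimpsets-2R}(b), the functor $p=p^{(0)}\colon\Tam^n\to\set$ factors as
\[
\Tam^n \xrightarrow{\p{n-1}} \Tam^{n-1} \xrightarrow{p} \set,
\]
so it suffices to show that $\p{n-1}\colon \Tam^n\to\Tam^{n-1}$ preserves finite products. Note that by the closure property \cref{C2} (which holds for $\Tam^n$ thanks to \cref{CompletingInductiveHypothesis}), products in $\Tam^n$ coincide with products in $\funcat{n}{\set}$, and these are computed levelwise; the same is true one dimension down.

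Thus, given $\X,\Y\in\Tam^n$, the problem reduces to showing that for each $(k_1,\ldots,k_{n-1})\in\Dsop{n-1}$ the canonical map
\[
p\bigl(\X_{k_1,\ldots,k_{n-1}}\times\Y_{k_1,\ldots,k_{n-1}}\bigr)\longrightarrow p(\X_{k_1,\ldots,k_{n-1}})\times p(\Y_{k_1,\ldots,k_{n-1}})
\]
is a bijection. Since $\Tam^n\subset \funcat{n-1}{\cat}$, both $\X_{k_1,\ldots,k_{n-1}}$ and $\Y_{k_1,\ldots,k_{n-1}}$ are nerves of categories, and the nerve functor preserves products as a right adjoint, so the product of simplicial sets above is again a nerve. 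Applying $p$ to a nerve amounts to taking isomorphism classes of objects in the corresponding category, and the desired bijection then reduces to the case $n=1$ already handled. The statement for $\pi_0$ is immediate, since $\pi_0$ is by definition the restriction of $p$ to $\GTam^n\subset\Tam^n$ and products in $\GTam^n$ are likewise computed in $\funcat{n}{\set}$.

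The only mildly delicate step is the bookkeeping across the identifications $\Tam^n\subset\funcat{n-1}{\cat}\subset\funcat{n}{\set}$ and of products therein, but this is entirely controlled by \cref{conv-multsimpsets-1R}, the inductive hypothesis \cref{I1}, and \cref{def-multsimpsets-2R}; there is no genuine mathematical difficulty.
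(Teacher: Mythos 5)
Your proof is correct and follows essentially the same route as the paper: the paper simply cites \cite[Lemma 4.1.4]{Paoli-2019} for the fact that taking isomorphism classes of categories preserves finite products and then notes that this propagates through the levelwise functors $\p{k}$, which is exactly the content of your base case and inductive step via \cref{lem-multsimpsets-2R}. Your version just unpacks the citation and the levelwise bookkeeping explicitly, which is fine.
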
 

\begin{proof} 
We know by \cite[Lemma 4.1.4]{Paoli-2019} that the functor $p\colon\cat\to\set$ preserves finite products; then, so does each $p^{(k)}$, and in particular, $p=p^{(0)}$ and therefore also $\pi_0$. 
\end{proof}

The following result is easy to derive from the definitions and was first obtained in \cite[Prop.\ 5.4]{Tamsamani}.

\begin{prop}\label{n_equiv_pi0}
Let $f\colon \X\to \Y$ be a map in $\GTam^n$. If $f$ is an $n$-equivalence, then $\pi_0f\colon\pi_0\X\to\pi_0\Y$ is a bijection.
\end{prop}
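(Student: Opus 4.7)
The plan is to reduce the claim to a straightforward induction on $n$, using the factorization of $p$ through the iterated truncation functors $p^{(r)}$ established in \cref{lem-multsimpsets-2R}(b), together with the fact (part of the \cref{Inductive hypothesis}, \cref{I2}) that each $p^{(k-1)}$ sends $k$-equivalences to $(k-1)$-equivalences. Since $\pi_0$ is just the restriction of $p=p^{(0)}$ to $\GTam^n$, and since a $0$-equivalence in $\set$ is by definition a bijection, this essentially collapses into peeling off one layer at a time.

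More concretely, first I would invoke \cref{lem-multsimpsets-2R}(b) to write
\[
  p = p^{(0)} \circ p^{(1)} \circ \cdots \circ p^{(n-1)} \colon \Tam^n \longrightarrow \set.
\]
By the definition of $n$-equivalence (condition (ii)), the map $p^{(n-1)} f \colon p^{(n-1)}\X \to p^{(n-1)}\Y$ is an $(n-1)$-equivalence in $\Tam^{n-1}$. Applying \cref{I2} at stage $n-1$, we get that $p^{(n-2)}(p^{(n-1)} f)$ is an $(n-2)$-equivalence in $\Tam^{n-2}$. Iterating this argument all the way down, after $n$ steps we conclude that the composite $p^{(0)}\circ p^{(1)}\circ\cdots\circ p^{(n-1)}f$ is a $0$-equivalence in $\set$, i.e.\ a bijection of sets. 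By the factorization above, this composite is precisely $p f = \pi_0 f$.

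There is no real obstacle here: everything needed is already encoded in the inductive setup of $\Tam^k$ and the functors $p^{(r)}$. The only point to be careful about is that the argument takes place in $\Tam^n$ rather than in $\GTam^n$, but this is harmless because $\GTam^n$ is a full subcategory of $\Tam^n$, an $n$-equivalence in $\GTam^n$ is by definition an $n$-equivalence in $\Tam^n$, and $\pi_0$ on $\GTam^n$ agrees by construction with $p$ on $\Tam^n$. The base of the induction is the well-known fact that the set of isomorphism classes of objects functor $p\colon \cat \to \set$ sends equivalences of categories to bijections, which is the $n=1$ case.
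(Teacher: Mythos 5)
Your argument is correct: the factorization $p=p^{(0)}\circ\cdots\circ p^{(n-1)}$ from \cref{lem-multsimpsets-2R}, combined with condition (ii) in the definition of $n$-equivalence and \cref{I2} at each lower stage, does yield that $\pi_0 f$ is a bijection, and the passage from $\GTam^n$ to $\Tam^n$ is harmless exactly as you say. The paper itself gives no proof here -- it only remarks that the statement is easy to derive from the definitions and cites Tamsamani's original Prop.\ 5.4 -- and your induction is precisely the intended derivation from the paper's inductive setup, so there is nothing to correct.
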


\subsection{Homotopy hypothesis in the Tamsamani model}

Our next goal is to review Tamsamani's version of the (unstable) homotopy hypothesis. This subsection is dedicated to defining the Poincar\'{e} $n$-groupoid of a topological space, which will constitute one half of the equivalence between truncated homotopy types and higher groupoids. We need to go into some detail here since we are using properties of this functor to establish the stable version of this result. As a convention, our category of topological spaces $\topo$ is an abbreviation for the category of compactly generated weak Hausdorff spaces. We refer the reader to \cite[Appendix A]{SchwedeGlobal} for a thorough discussion with a slightly unusual terminology.

Our goal is now to define a Tamsamani $n$-groupoid corresponding to a given space. This requires some preparation. 

We define the cosimplicial space 
\[
\begin{tikzcd}[row sep=tiny]
	\Delta\ar[r, "\deltatop{-}"]		&\topo\\
	\left[m\right]\ar[mapsto]{r}		&\deltatop{m}
\end{tikzcd}
\]
where $\deltatop{0}=\{*\}$ and, for every positive integer $m$, we denote by $\deltatop{m}$ the standard topological $m$-simplex. The coface maps are inclusions of the corresponding face, and the codegeneracies are given by the corresponding maps collapsing the simplex to a lower-dimensional one. For more details, we refer the reader to \cite[Example 1.1]{GoerssJardine}. 

We can now define a functor $\R_n\colon\Delta^n\to \topo$, given for every $M=(m_1,\dots,m_n)\in\Delta^n$ by
$$\R_n(M)=\deltatop{m_n}\times\cdots\times \deltatop{m_1}.$$
Here, $\R$ stands for \lq\lq{}realization\rq\rq{} as this functor encodes a variant of geometric realization. A morphism in $\Delta^n$ is mapped by $\R_n$ to the corresponding product of induced morphisms between topological simplices. 

In this subsection, we recall the construction of Tamsamani's functors $\Pi_n\colon\topo\to \GTam$, discussed in \cite[\textsection 6]{Tamsamani}. An alternative approach is also given in \cite{BlancPaoli2014} and \cite{Paoli-2019}.

\begin{notation}
For any topological space $X$ and every $n\geq 1$, we write $\Hp^n(X)$ for the multi-simplicial set given by the composition $\Hom_{\topo}(\R_n(-),X)$, where $\Hom_{\topo}$ denotes the set of continuous maps. 
\end{notation}

\begin{rmk} \label{RmkHn}
Note that the functor $\Hp^n$ can be alternatively described as $\diag_* \Sing \colon \topo \to \funcat{n}{\set}$, where $\diag_*$ denotes the right adjoint to the diagonal functor. 
\end{rmk}

Let $v_i$ denote the $i$-th vertex of the topological simplex $\deltatop{m}$. Note that we use the reformulation of \cite[Prop.\ 6.3]{Tamsamani} rather than the original definition.

\begin{defn}
We define inductively the \emph{singular multicomplex} $\multising{n}{X}$ of a topological space $X$ to be a multi-simplicial subset $\multising{n}{X}\subset \Hp^n(X)$ such that
\begin{itemize}
\item $\multising{1}{X}=\Hp^1(X)$, and 
\item for $M\in\Delta^n$, the subset $\multising{n}{X}_{M}$ consists of the elements $f\in \Hp^n(X)_{M}$ for which the following holds: for every $0\leq k\leq n-1$, every $0\leq i\leq m_{n-k}$, and every 
\[
(x_n,\dots,\widehat{x_{n-k}},\dots,x_1)\in \deltatop{m_n}\times\cdots\times \widehat{\deltatop{m_{n-k}}}\times \cdots\times \deltatop{m_1},
\]
there is an element $f_i$ of $\multising{n-k-1}{X}_{m_1,\dots,m_{n-k-1}}$ independent of the variables \linebreak $x_n,\dots, x_{n-k+1}$ so that 
\[
f(x_n,\dots,x_{n-k+1},v_i, x_{n-k-1},\dots,x_1)=f_i(x_{n-k-1},\dots,x_1).
\] 
\end{itemize}
\end{defn}

The condition in the definition ensures both that the $0$-th level of Poincaré $n$-groupoid defined below is discrete and that the morphisms in the Poincaré $n$-groupoid can be composed.

We can define an equivalence relation on $\multising{n}{X}_M$ by saying that two elements $f$ and $g$ in $\multising{n}{X}_M$ are homotopic if and only if there exists $\gamma\in \multising{n+1}{X}_{M,1}$ such that $d_0(\gamma)=f$ and $d_1(\gamma)=g$ with $d_0, d_1\colon \multising{n+1}{X}_{M,1}\to \multising{n+1}{X}_{M,0}\cong\multising{n}{X}_M$. The aim is to identify two elements of $\multising{n}{X}_M$ when they are homotopic through maps with the same constraints as above.  

\begin{defn}
Given a topological space $X$ and a positive integer $n$, we define the \emph{Poincar\'e $n$-groupoid} of $X$ as the functor $\Pi_n(X)\colon(\Dop)^n\to\set$ obtained as the quotient of $\multising{n}{X}$ by the homotopy relation; in particular, we define $\Pi_n(X)_M$ to be the quotient of $\multising{n}{X}_M$ by the homotopy relation, for every $M\in \Delta^n$. 
\end{defn}

Tamsamani proved that $\Pi_n(X)$ is indeed what we call a Tamsamani $n$-groupoid \cite[Thm.\ 2.3.5]{TamsamaniPreprintSpaces}, \cite[Thm.\ 6.4]{Tamsamani}, and also that this defines a functor $\Pi_n(-)\colon\topo\to \GTam^n$.

\begin{ex}\label{ex_Poincare_gpd}
We will unravel the definition in the easiest cases. Fix a topological space $X$. We will write $\sim_h$ for the homotopy relation defined above. 

For $n=1$, the Poincar\'e $1$-groupoid of $X$ is in the first place a simplicial set with $m$-simplices given by $\Sing(X)_m/\sim_h$, where we use $\Hp^1(X)\cong \Sing(X)$ as in \cref{RmkHn}. We have to identify the equivalence relation $\sim_h$. 

Note that $\multising{2}{X}_{m,1}$ consists of continuous maps $H\colon \Delta[1] \times \Delta[m] \to X$ so that $H(-, v_i)$ is constant for all $v_i \in \Delta[m]$. In particular, if $m=0$, the relation $\sim_h$ on $\Sing(X)_0$ is just the identity and the objects of $\Pi_1(X)$ are precisely the points of $X$. For $m=1$, the relation $\sim_h$ is given by the endpoint-preserving homotopies between paths, i.e., between the elements of $\Sing(X)_1$. 

Since we already know that $\Pi_1(X)$ is the nerve of a groupoid, we conclude that it is the nerve of the fundamental groupoid, as the notation suggests. 
\end{ex}

\begin{defn} \label{DefGeomRealTamsamani}
The \emph{geometric realization functor} $|-|\colon\GTam^n\to \topo$ for Tamsamani $n$-groupoids is given by the following composition
\[
	\begin{tikzcd}
		\GTam^n\ar[r, hook]		&\left[(\Dop)^n,\set\right]\ar[r, "\diag"]		&\left[\Dop,\set\right]\ar[r, "|-|"]		&\topo
	\end{tikzcd}
\]
where $\diag\colon\left[(\Dop)^n,\set\right] \to \left[\Dop,\set\right]$ is given by $\diag(X)_r=X(r,\dots,r)$, for every $r\geq 0$, and $|-|\colon\left[\Dop,\set\right]\to\topo$ is the usual geometric realization.
\end{defn}

The homotopy hypothesis in Tamsamani's model for higher groupoids states the following (see \cite[Thm.\ 8.0]{Tamsamani}). 

\begin{thm}\label{UHH}
The functors $\Pi_n$ and $\vert -\vert$ restrict to
	\[\begin{tikzcd}
		{|-|\colon \GTam^n}\ar[r, shift left=1.1ex] & \topn\colon\Pi_n\ar[l, shift left=1.1ex] 
	\end{tikzcd}\]
and induce an equivalence of homotopy categories
\[\begin{tikzcd}
\Ho(\GTam^n)\ar[r, shift left=1.1ex] & \Ho(\topn)\ar[l, shift left=1.1ex] 
\end{tikzcd}\]
where $\Ho(\topn)$ is the localization of $\topn$ with respect to the usual topological weak equivalences and $\Ho(\GTam^n)$ is the localization of $\GTam^n$ with respect to the $n$-equivalences.
\end{thm}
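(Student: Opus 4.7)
The plan is to proceed by induction on $n$, using the classical cases $n=0$ (both categories are equivalent to sets) and $n=1$ (the fundamental groupoid equivalence of \cref{ex_Poincare_gpd}). For the inductive step, the strategy rests on a single key compatibility: for any $\X\in\GTam^n$ and objects $a,b\in\X_0$, the natural map from $|\X(a,b)|$ to the path space $P_{|a|,|b|}|\X|$ is a weak equivalence. Granting this, the four verifications below all reduce to $(n-1)$-dimensional statements available by induction.

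First I would verify that the functors restrict to the claimed subcategories. For $\X\in\GTam^n$ one shows $\pi_k|\X|=0$ for $k>n$ by reducing, via the key compatibility, to $\pi_{k-1}|\X(a,a)|$, which vanishes inductively since $\X(a,a)\in\GTam^{n-1}$. That $\Pi_n$ lands in $\GTam^n$ may be invoked from \cite{Tamsamani}. Next, I would show $\Pi_n$ preserves weak equivalences and $|-|$ preserves $n$-equivalences. For the latter it suffices to check bijectivity on $\pi_k$ for $0\le k\le n$; the case $k=0$ is \cref{n_equiv_pi0}, and for $k\ge 1$ the key compatibility converts the question into applying the inductive hypothesis to the hom-$(n-1)$-equivalence $f(a,a)\colon\X(a,a)\to\Y(fa,fa)$ supplied by the definition of $n$-equivalence. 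The corresponding statement for $\Pi_n$ is analogous, using the identification $\Pi_n(X)(a,b)\simeq\Pi_{n-1}(P_{a,b}X)$ arising directly from the definition of the singular multicomplex together with discreteness of $\multising{n}{X}_{0,\ldots,0}$.

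Finally, I would exhibit a unit $\eta_\X\colon\X\to\Pi_n|\X|$ and counit $\epsilon_X\colon|\Pi_n X|\to X$ and show they are an $n$-equivalence and a weak equivalence respectively. The counit is obtained by following the inclusion $\multising{n}{X}\hookrightarrow\Hp^n(X)\cong\diag_*\Sing(X)$, passing to the homotopy quotient, then applying the diagonal and the classical counit of geometric realization paired with the singular simplicial set. The unit $\eta_\X$ assigns to a multisimplex of $\X$ its realization, which after taking homotopy classes is a multisimplex of $\Pi_n|\X|$. That $\epsilon_X$ is a $\pi_k$-isomorphism for $k\le n$ and that $\eta_\X$ is an $n$-equivalence both reduce, via the key compatibility, to the corresponding $(n-1)$-dimensional statements on hom-groupoids and path spaces, closing the induction.

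The main obstacle, and technical heart of the argument, will be the key compatibility $|\X(a,b)|\simeq P_{|a|,|b|}|\X|$. This is where the Segal condition on $\X$ is really used: the Segal maps $\X_s\to\X_1\times_{\X_0}\cdots\times_{\X_0}\X_1$ being $(n-1)$-equivalences, combined with discreteness of $\X_0$, is what ensures the realization of $\X$ behaves like a classifying space with hom-spaces modelled by $|\X(a,b)|$. Establishing this will not be straightforward from scratch; the cleanest path is to follow Tamsamani's original proof in \cite{Tamsamani}, which combines a careful analysis of multisimplicial realization with the inductive hypothesis at level $n-1$.
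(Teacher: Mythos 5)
The first thing to note is that the paper does not prove this statement: \cref{UHH} is imported wholesale from Tamsamani, the text around it citing \cite[Thm.\ 8.0]{Tamsamani}, and the only ingredients the paper ever uses are quoted piecemeal later (the functorial zigzag $|\Pi_n(Z)|\to Z$ of \cite[Rmk.\ 10.8]{Tamsamani} in \cref{Pi_preserves_prod}, the natural $n$-equivalence $\X\to\Pi_n|\X|$ of \cite[\textsection 11]{Tamsamani}, and \cite[Prop.\ 11.2, Prop.\ 11.4]{Tamsamani}). So the benchmark for your proposal is Tamsamani's original argument, and in outline you have reconstructed its shape correctly: induct on $n$, control $\pi_0$ through $p^{(n-1)}$ and \cref{n_equiv_pi0}, and reduce everything to compatibility of the hom-objects $\X(a,b)$ with path spaces.

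As a proof, however, the proposal has two concrete defects. First, the counit $\epsilon_X\colon |\Pi_n X|\to X$ you describe does not exist as a map of spaces: $\Pi_n(X)$ is a \emph{quotient} of $\multising{n}{X}$, so the comparison maps all point out of $\multising{n}{X}$ (into $\Pi_n(X)$ on one side and into $\Hp^n(X)\cong\diag_*\Sing(X)$, hence into $X$, on the other), and nothing descends through the homotopy identifications. What actually exists is a functorial zigzag of weak equivalences relating $|\Pi_n(X)|$ and $X$ -- this is exactly why both Tamsamani and the paper phrase it that way -- and to use it one must in addition prove that the wrong-way leg is a weak equivalence, which is itself part of the technical content you have not addressed. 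Second, the ``key compatibility'' $|\X(a,b)|\simeq P_{|a|,|b|}|\X|$, together with the claimed identification $\Pi_n(X)(a,b)\simeq \Pi_{n-1}(P_{a,b}X)$ (which is not ``direct from the definition''; it is nontrivial that the Segal maps and the quotient interact correctly), is precisely the substance of the theorem, and you concede it rather than prove it, ultimately deferring to \cite{Tamsamani}. That deferral is no worse than what the paper itself does, since the theorem is cited and not proven there; but it means your text is an outline of the cited proof rather than an independent argument, and the counit step needs to be repaired to the standard zigzag before even the outline is accurate.
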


\section{Modelling stable types}
\label{Sec:Spectra}

After discussing how we model higher categories in this paper, we want to explain how we model stable $n$-types so that both sides of the stable homotopy hypothesis acquire a fixed meaning. We will use sequential spectra to model the stable homotopy category. Moreover, to model connective spectra, it will turn out to be convenient to use Bousfield--Friedlander model of $\Gamma$-spaces \cite{BousfieldFriedlander}. We recall both notions and some of the comparison results; this part is purely expository. We then observe that the corresponding equivalences of homotopy categories restrict to an equivalence of models for stable $n$-types. 

\subsection{The model category of spectra}
Our spectra are going to take values in simplicial sets since this aligns nicely with the simplicial nature of our arguments. Before introducing spectra, we remind the reader of some standard facts about simplicial sets.

Recall that $\partial \Delta[n]$ denotes the boundary of an $n$-simplex. In what follows, we write $S^1=\Delta[1]/\partial \Delta[1]$. Moreover, we write $S^n=(S^1)^{\wedge n}$. 
 
\begin{defn} 
Given a pointed simplicial set $X\in\ssetp$, we define its simplicial \emph{loop space} by $\Omega X=\Hom_{\ssetp}(S^1\wedge\Delta[-]_{+},X)\in\ssetp$, and its \emph{suspension} by $\Sigma X= S^1\wedge X$. Here $(-)_{+}$ denotes the functor $\sset\to \ssetp$ that freely adjoins a basepoint.
\end{defn}

\begin{rmk}
One can verify that these functors form an adjunction $\Sigma\dashv\Omega$.
\end{rmk}

\begin{lem}\label{lem3}
The simplicial loop functor commutes with filtered colimits; that is, 
\[\Omega\colim_i X_i\cong\colim_i\Omega X_i.\] 
\end{lem}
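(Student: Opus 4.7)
The plan is to reduce the statement to the fact that $\Omega X$ is defined as a mapping space out of a compact pointed simplicial set, and that $\Hom$-sets out of compact objects commute with filtered colimits. More precisely, by definition the $n$-simplices of $\Omega X$ are
\[
(\Omega X)_n = \Hom_{\ssetp}(S^1 \wedge \Delta[n]_+, X),
\]
and filtered colimits in $\ssetp$ are computed degreewise in $\set_*$ (which in turn are computed from filtered colimits in $\set$ after adjoining the basepoint). Hence, to produce the desired natural isomorphism $\Omega \colim_i X_i \cong \colim_i \Omega X_i$, it suffices to exhibit for each $n \geq 0$ a natural bijection
\[
\Hom_{\ssetp}(S^1 \wedge \Delta[n]_+,\, \colim_i X_i) \;\cong\; \colim_i \Hom_{\ssetp}(S^1 \wedge \Delta[n]_+,\, X_i).
\]

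The key observation is that for each $n$, the pointed simplicial set $K := S^1 \wedge \Delta[n]_+$ has only finitely many non-degenerate simplices. I would verify this directly from the definitions: $S^1 = \Delta[1]/\partial\Delta[1]$ has one non-basepoint non-degenerate simplex in each dimension $0$ and $1$, and $\Delta[n]_+$ has finitely many non-degenerate simplices in dimensions $0,\dots,n$, so the smash product inherits finiteness. I would then invoke the standard fact that any such finite pointed simplicial set is a compact (finitely presentable) object of $\ssetp$, meaning $\Hom_{\ssetp}(K,-)$ preserves filtered colimits. The argument for this is that a map $K \to \colim_i X_i$ is specified by the images of the finitely many non-degenerate simplices of $K$ together with finitely many simplicial-identity relations; by the universal property of filtered colimits in $\set$, each chosen simplex lifts to some $X_{i}$, and by filteredness one can find a single index $i$ at which all of the finitely many simplices and all of the finitely many relations are realized, giving a lift $K \to X_i$, unique up to further advancement in the diagram.

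Putting these pieces together yields the desired isomorphism levelwise, and naturality in the simplicial degree $n$ (i.e.,\ compatibility with the simplicial structure maps) follows automatically from the universal property of the colimit. I expect no substantive obstacle: the only point that requires a little care is keeping track of basepoints, ensuring that the colimit of pointed simplicial sets is indeed computed by taking the ordinary colimit in $\sset$ (with the cone of basepoints identified) and that $K$ remains compact in the pointed sense. Once that is noted, the rest is a direct application of the compactness principle.
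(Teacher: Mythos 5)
Your proof is correct and follows essentially the same route as the paper: the paper's proof is precisely the observation that $S^1\wedge\Delta[-]_+$ has finitely many non-degenerate simplices and is therefore a finite (compact) object, citing \cite[Lemma 3.1.2]{HoveyModelCategories} for the fact that mapping out of such an object commutes with filtered colimits. You simply spell out the compactness argument and the levelwise/naturality bookkeeping that the paper leaves implicit, so there is nothing to add.
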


\begin{proof}
Recall that every simplicial set with finitely many non-degenerate simplices (and thus, in particular, $S^1\wedge\Delta[-]_{+}$) is finite; see for example \cite[Lemma 3.1.2]{HoveyModelCategories}. 
\end{proof}

The reader should be aware that the simplicial loop space functor is not homotopically well-behaved in general. The situation changes if we restrict ourselves to Kan complexes, where we get the following well-known lemma. 

\begin{lem}\label{lem1}
If $X\in\ssetp$ is a Kan complex, then $\Omega\vert X\vert\simeq\vert\Omega X\vert$. 
\end{lem}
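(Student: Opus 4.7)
The plan is to produce a zigzag of weak equivalences connecting $|\Omega X|$ and $\Omega|X|$ going through $|\Sing \Omega|X||$, using the $|-|\dashv \Sing$ Quillen equivalence together with the fact that $\Omega$ is a right Quillen functor on $\ssetp$.

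First, since $X$ is a Kan complex, both $X$ and $\Sing|X|$ are fibrant in the Kan--Quillen model structure on $\ssetp$, and the unit $X\to \Sing|X|$ is a weak equivalence. The adjunction $\Sigma\dashv \Omega$ on $\ssetp$ is a Quillen adjunction (its left adjoint preserves monomorphisms and all weak equivalences), so its right adjoint $\Omega$ preserves weak equivalences between fibrant objects. Hence $\Omega X\to \Omega \Sing|X|$ is a weak equivalence, and applying the left Quillen functor $|-|$ (which preserves all weak equivalences, as every simplicial set is cofibrant) yields a weak equivalence $|\Omega X|\xrightarrow{\sim}|\Omega \Sing|X||$.

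Next, I would establish a natural isomorphism of simplicial sets $\Omega\Sing|X|\cong \Sing \Omega|X|$. Unraveling both sides at level $n$, the $n$-simplices identify with pointed continuous maps $|S^1|\wedge|\Delta[n]_+|\to |X|$: for the left-hand side this uses the adjunction $|-|\dashv \Sing$ together with the standard fact that $|-|$ preserves smash products in compactly generated spaces; for the right-hand side it uses the exponential law in $\topo$, identifying $\Hom_{\topo}(|\Delta[n]|,\Hom_{\topo_*}(S^1,|X|))$ with pointed maps from $S^1$ into the mapping space $\Hom_{\topo}(|\Delta[n]|, |X|)$.

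Finally, the counit $|\Sing Y|\to Y$ is a weak equivalence for every topological space $Y$, as part of the classical Quillen equivalence between $\ssetp$ and pointed topological spaces. Specializing to $Y=\Omega|X|$ produces a weak equivalence $|\Sing \Omega|X||\xrightarrow{\sim}\Omega|X|$, and concatenating the three maps yields the desired chain
\[
|\Omega X|\xrightarrow{\sim}|\Omega \Sing|X||\cong|\Sing \Omega|X||\xrightarrow{\sim}\Omega|X|.
\]
The main obstacle is verifying the natural isomorphism in the second step; it is essentially a bookkeeping exercise with adjunctions, but care is needed in tracking basepoints and distinguishing smash products from ordinary products.
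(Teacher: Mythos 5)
Your argument is correct, and it produces exactly the same zigzag as the paper, namely $|\Omega X|\xrightarrow{\;\sim\;}|\Omega\Sing|X||\cong|\Sing\Omega|X||\xrightarrow{\;\sim\;}\Omega|X|$, with the counit of $|-|\dashv\Sing$ handling the last leg; the difference lies in how the first two legs are justified. The paper collapses them into a single cited statement, the weak equivalence $\Sing\HOM_{\topo_*}(|Y|,|X|)\simeq\Hom_{\ssetp}(Y\wedge\Delta[-]_+,X)$ for Kan complexes $X$ (a Hirschhorn-type comparison of topological and simplicial mapping spaces, specialized to $Y=S^1$), whereas you re-derive this from first principles: you check that $\Sigma\dashv\Omega$ is a Quillen adjunction on $\ssetp$, apply Ken Brown's lemma to the unit $X\to\Sing|X|$ (this is where your Kan hypothesis enters, matching where the paper needs it), and then invoke the natural isomorphism $\Omega\Sing|X|\cong\Sing\Omega|X|$, which is precisely the paper's \cref{lem2} (proved there by uniqueness of adjoints, and used by the paper only later for the $Q$-construction). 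Your route is self-contained modulo standard model-categorical facts, at the cost of verifying that $S^1\wedge-$ preserves monomorphisms and all weak equivalences and of the levelwise bookkeeping with smash products and the exponential law in compactly generated spaces; the paper's route is shorter but outsources the central comparison to the literature. Both are complete proofs, and your intermediate isomorphism could even be replaced by a direct citation of \cref{lem2}.
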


 \begin{proof}
We write $ \HOM_{\topo_*}$ for the internal hom in the category of pointed topological spaces. This lemma is a special case of the weak equivalence $\Sing \HOM_{\topo_*} (\lvert Y\rvert, \lvert X\rvert) \simeq \Hom_{\ssetp}(Y\wedge\Delta[-]_{+},X)$ for Kan complexes $X$ (see e.g.\ \cite[Prop.\ 1.1.11]{Hirschhorn}) together with the counit of the adjunction $\vert - \vert\dashv \Sing$ being a weak equivalence: 
\[
\Omega\lvert X\rvert\xleftarrow{\simeq}\left| \Sing \Omega\lvert X\rvert\right|=\left| \Sing \HOM_{\topo_*} (\lvert S^1\rvert, \lvert X\rvert)\right|  \simeq \left| \Hom_{\ssetp}(S^1\wedge\Delta[-]_{+},X)\right|=\left| \Omega X\right|
\]
\end{proof}

\begin{lem}\label{lem2}
For any space $Y\in\topo$, we have a natural isomorphism $\Omega\Sing Y\cong\Sing\Omega Y$. 
\end{lem}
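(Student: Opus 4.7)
The plan is to establish the isomorphism levelwise and then observe that the construction is natural in the simplicial variable and in $Y$. Concretely, I would compute the set of $n$-simplices on each side and exhibit a chain of natural bijections induced by standard adjunctions.

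First, I unpack the definitions. By the definition of $\Sing$ and of the simplicial loop functor,
\[
(\Omega \Sing Y)_n \;=\; \Hom_{\ssetp}\bigl(S^1 \wedge \Delta[n]_+,\, \Sing Y\bigr),
\]
while, interpreting $\Omega Y = \HOM_{\topo_*}(\lvert S^1\rvert, Y)$ as the usual topological loop space (with basepoint the constant loop),
\[
(\Sing \Omega Y)_n \;=\; \Hom_{\topo}\bigl(\lvert\Delta[n]\rvert,\, \HOM_{\topo_*}(\lvert S^1\rvert, Y)\bigr).
\]

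Next I would chain four standard natural bijections. Starting from $(\Sing \Omega Y)_n$:
(i) unpointed maps out of $\lvert\Delta[n]\rvert$ into a pointed space are the same as pointed maps out of $\lvert\Delta[n]\rvert_+$, giving
$\Hom_{\topo_*}(\lvert\Delta[n]\rvert_+, \HOM_{\topo_*}(\lvert S^1\rvert, Y))$;
(ii) the smash-hom adjunction in $\topo_*$ (which is well-behaved because we work with compactly generated weak Hausdorff spaces, as noted in the paper) yields
$\Hom_{\topo_*}(\lvert\Delta[n]\rvert_+ \wedge \lvert S^1\rvert,\, Y)$;
(iii) geometric realization is strong monoidal for the smash product and satisfies $\lvert X_+\rvert \cong \lvert X\rvert_+$ and $\lvert X\wedge Y\rvert \cong \lvert X\rvert \wedge \lvert Y\rvert$, so this becomes
$\Hom_{\topo_*}(\lvert \Delta[n]_+ \wedge S^1\rvert,\, Y)$;
(iv) the pointed version of the $\lvert -\rvert \dashv \Sing$ adjunction identifies this with
$\Hom_{\ssetp}(\Delta[n]_+ \wedge S^1,\, \Sing Y) = (\Omega \Sing Y)_n$, using that $S^1 \wedge \Delta[n]_+ \cong \Delta[n]_+ \wedge S^1$.

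Finally, naturality: each step in the chain above is natural in $[n] \in \Delta$ (so the bijections assemble into a map of simplicial sets) and in $Y$ (so the whole isomorphism is natural in $Y \in \topo$). The main thing to be careful about is the pointed/unpointed bookkeeping and the compatibility of $\lvert -\rvert$ with smash products; both rely on working inside compactly generated weak Hausdorff spaces, which is the paper's standing convention. There is no genuine obstacle — the lemma is a formal consequence of adjunctions and the monoidality of geometric realization.
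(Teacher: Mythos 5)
Your argument is correct. It differs in style from the paper's proof, which is a purely formal one: the paper composes the adjunctions $\Sigma_s\dashv\Omega_s$, $|-|\dashv\Sing$, $\Sigma_t\dashv\Omega_t$, uses that realization commutes with suspension to get $|-|\circ\Sigma_s\cong\Sigma_t\circ|-|$, and then invokes uniqueness of right adjoints to conclude $\Omega_s\Sing\cong\Sing\Omega_t$ in one line. Your proof is essentially this same argument unrolled levelwise: the chain of bijections $(-)_+\dashv U$, the smash--hom adjunction in $\topo_*$, monoidality of $|-|$ for the smash product, and the pointed $|-|\dashv\Sing$ adjunction reproduces, simplex by simplex, exactly the natural isomorphism that uniqueness of adjoints produces abstractly. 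What your version buys is explicitness: the pointed/unpointed bookkeeping, the role of the compactly generated weak Hausdorff hypothesis (for $|X\wedge K|\cong|X|\wedge|K|$ and for the internal hom $\HOM_{\topo_*}$), and the compatibility with the simplicial structure maps are all visible, whereas the paper's proof is shorter but leaves these checks implicit in the cited adjunctions. One small point, common to both arguments: although the statement says $Y\in\topo$, the functors $\Omega$ and the adjunction used are the pointed ones, so $Y$ should be read as a pointed space — you handle this consistently by working with $\HOM_{\topo_*}(\lvert S^1\rvert, Y)$, which is how the lemma is used later in the construction of $Q$.
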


\begin{proof} 
From the adjunctions 
\[\begin{tikzcd}
\ssetp\rar[shift left=0.9ex,"\Sigma_s"] & \ssetp\rar[shift left=1ex,"\vert - \vert"]\lar[shift left=0.8ex,"\Omega_s","\bot"'] & \topo_* \lar[shift left=0.8ex,"\Sing","\bot"']\rar[shift left=0.8ex,"\Sigma_t"] & \topo_* \lar[shift left=0.8ex,"\Omega_t","\bot"']
\end{tikzcd}\] 
and the well-known fact that $\Sigma$ and $\vert- \vert$ commute, we see that $$(\Omega_s\Sing)\vdash \left(\vert -\vert\Sigma_s\right)\cong\left(\Sigma_t\vert - \vert\right)\dashv\left(\Sing\Omega_t\right),$$ where the subscripts are meant to differentiate the simplicial and topological functors. Therefore, we conclude that $\Omega_s\Sing\cong\Sing\Omega_t$.
\end{proof}

We now return to the main focus of this subsection: the category of spectra.

\begin{defn}
A \emph{sequential spectrum} $X$ consists of a sequence $X^n\in\ssetp$ with $n\geq 0$ together with suspension maps $\sigma^n\colon \Sigma X^n\to X^{n+1}$ of pointed simplicial sets. A morphism $f\colon X\to Y$ of spectra consists of a sequence of maps $f^n\colon X^n\to Y^n$ in $\ssetp$ for each $n\geq0$ satisfying $\sigma^n(\id_{S^1}\wedge f^n)=f^{n+1}\sigma^n$.
\end{defn}

As we only work with sequential spectra, we will henceforth refer to them simply as spectra.
We denote the category of spectra by $\spectra$.

\begin{rmk}
Given spectra $X,Y$, it is possible to define a mapping space between spectra $\HOM_\spectra(X,Y)\in\ssetp$, given by 
$$\HOM_\spectra(X,Y)_n=\Hom_\spectra (X\wedge \Delta[n]_{+}, Y),$$
where the smash product of a spectrum with a simplicial set is defined levelwise. This provides the category of spectra with a structure of a simplicial category \cite[Prop.\ 2.2]{BousfieldFriedlander}. 
\end{rmk}

For a detailed treatment of models for spectra, we refer the reader e.g.\ to \cite{BousfieldFriedlander}, \cite{SchwedeSymmetricSpectra}, \cite{Switzer}. In this paper, we limit ourselves to the concepts and examples we will need for our proofs. 

\begin{ex}
The sphere spectrum, denoted by $\S$, is the spectrum with $\S^n=S^n$ and suspension maps $\Sigma S^n\xrightarrow{\cong} S^{n+1}$. 
\end{ex}

Recall that the homotopy groups of a pointed simplicial set can be defined as the homotopy groups of its geometric realization. This will be useful in the brief reminder below on weak equivalences between spectra. 

\begin{defn}
The \emph{$i$th homotopy group} of a spectrum $X$ is defined as $$\pi_i X=\colim_j \pi_{i+j} X^j,$$ where the colimit is taken over the maps 
\begin{center}
    \begin{tikzcd}
    \pi_{i+j} X^j\rar["\Sigma"] & \pi_{i+j+1}\Sigma X^j\rar["\sigma^j_*"] & \pi_{i+j+1}X^{j+1}
    \end{tikzcd}
\end{center} 
and runs over all $j\geq 0$ when $i\geq 0$, and over $j+i\geq 0$ for $i<0$.
\end{defn}

\begin{defn}
A spectrum $X$ is called \emph{connective} if $\pi_i X=0$ for $i<0$. A spectrum $X$ is an \emph{$\Omega$-spectrum} if the maps $\vert\sigma^n\vert^\flat\colon \vert X^n\vert\to \Omega \vert X^{n+1}\vert$ adjoint to the structure maps $\Sigma\vert X^n\vert\cong \vert\Sigma X^n\vert \xrightarrow{\vert\sigma^n\vert} \vert X^{n+1}\vert$ are weak equivalences in $\topo_*$. 

We denote the full subcategory of $\spectra$ whose objects consist of $\Omega$-spectra by $\Omega\spectra$. 
\end{defn}

\begin{rmk}\label{htpy_omega}
Note that the homotopy groups of $\Omega$-spectra are substantially easier to compute than those of ordinary spectra. Indeed, if $X$ is an $\Omega$-spectrum, then for each $i\geq 0$ and $j\geq 1$ we have 
$$\pi_{i+j} X^j =\pi_{i+j}\vert X^j\vert\cong \pi_{i+j-1} \Omega \vert X^j\vert\cong \pi_{i+j-1} \vert X^{j-1}\vert\cong\dots\cong \pi_i \vert X^0\vert =\pi_i X^0,$$ 
and this isomorphism is compatible with the maps in the colimit. Thus, $\pi_i X\cong \pi_i X^0$. Similarly, if $i<0$, we see that $\pi_i X\cong \pi_0 X^{-i}$. 
\end{rmk}

The category of spectra admits two natural model category structures, whose weak equivalences we now introduce.

\begin{defn}
A morphism of spectra $f\colon X\to Y$ is a \emph{strict weak equivalence} if each component $f^n\colon X^n\to Y^n$ is a weak equivalence in $\ssetp$. The morphism is a \emph{stable weak equivalence} if it induces isomorphisms $f_*\colon \pi_n X\xrightarrow{\cong} \pi_n Y$ for every $n$.
\end{defn}

According to \cite[Prop.\ 2.2]{BousfieldFriedlander}, there exists a model category structure on $\spectra$ having strict weak equivalences as its weak equivalences; this is called the \emph{strict model structure}, and we denote its homotopy category by $\Ho(\spectra)^\text{strict}$. Moreover, there exists another model structure on $\spectra$ whose weak equivalences are the stable weak equivalences, called the \emph{stable model structure}, and whose homotopy category we denote by $\Ho(\spectra)^\text{stable}$; this is shown in \cite[Prop.\ 2.3]{BousfieldFriedlander}. This is the stable homotopy category we are interested in. 

In fact, Bousfield--Friedlander construct the stable model category from the strict model category structure. In order to do so, they make use of a functor $Q\colon\spectra\to\Omega\spectra$ together with a natural transformation $\eta\colon 1\Rightarrow Q$ such that each component $\eta_X\colon X\to QX$ is a stable weak equivalence. Any such functor $Q$ satisfying certain technical conditions allows them to obtain a stable model category structure through a $Q$-localization theorem \cite[Thm.\ A.7]{BousfieldFriedlander}. 

For example, one could consider the functor $Q\colon \spectra\to\Omega\spectra$ given by 
$$(QX)^n=\colim_{i\geq 0} \Sing \Omega^i\vert X^{n+i}\vert,$$
where the sequential colimit is taken over the maps
$$\Sing\Omega^i\vert X^{n+i}\vert\to\Sing\Omega^{i+1}\vert X^{n+i+1}\vert$$
induced by the adjoints of the structure maps, 
$\vert X^{n+i}\vert\xrightarrow{\vert\sigma^{n+i}\vert^\flat} \Omega\vert X^{n+i+1}\vert$. 

Using \cref{lem2,lem3}, we can define the structure maps of $QX$ to be adjoint to the following isomorphisms of simplicial sets, where the first step is just re-indexing:
\[
\begin{tikzcd}[column sep=0.7cm]
\colim_{i\geq 0} \Sing \Omega^i\vert X^{n+i}\vert \arrow[r, "\cong"]& \colim_{i\geq 0} \Sing \Omega^{i+1}\vert X^{n+i+1}\vert \arrow[r, "\cong"] & \Omega\colim_{i\geq 0} \Sing \Omega^i\vert X^{n+1+i}\vert.
\end{tikzcd}
\] 
We observe that each level $(QX)^n$ is a filtered colimit of Kan complexes, thus a Kan complex itself. Hence, we can apply \cref{lem1} to conclude that $QX$ is an $\Omega$-spectrum.

For the natural transformation $\eta\colon \id\Rightarrow Q$, let $(\eta_X)^n\colon X^n\to (QX)^n$ be the colimit leg 
$$X^n\to\Sing\Omega^0 \vert X^{n+0}\vert\to\colim_{i\geq 0}\Sing\Omega^i \vert X^{n+i}\vert=(QX)^n,$$ 
where the first map is given by the unit $\id\Rightarrow\Sing\vert -\vert$. One can check that each $\eta_X$ is a morphism of spectra, and that $\eta$ is natural. Furthermore, we have an isomorphism 
\begin{align*}
    \pi_n X & = \colim_i \pi_{n+i} X^i\\
    & \cong \colim_i\pi_0\Omega^{n+i}\vert X^i\vert\\
    & \cong \colim_i\pi_0\Sing\Omega^{n+i}\vert X^i\vert\\
    & \cong \colim_i\pi_0\Omega^n\Sing\Omega^i\vert X^i\vert\\
    & \cong \pi_0\Omega^n\colim_i\Sing\Omega^i\vert X^i\vert\\
    & = \pi_0 \Omega^n(QX)^0\\
    & \cong \pi_n QX
\end{align*}
where we have used \cref{lem3}, \cref{htpy_omega}, the facts that $\pi_0$ is a left adjoint, and that $QX$ is a levelwise Kan $\Omega$-spectrum. As one can check that the map is precisely the one induced by $\eta_X$, we conclude that $\eta_X$ is a stable weak equivalence.

The following well-known lemma shows that stable weak equivalences and strict weak equivalences agree when working in $\Omega\spectra$.

\begin{lem}\label{stable_iff_strict} 
Let $f\colon X\to Y$ be a morphism in $\Omega\spectra$. Then $f$ is a stable weak equivalence if and only if it is a strict weak equivalence.
\end{lem}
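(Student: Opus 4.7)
The plan is to prove the two implications separately, with the direction ``strict $\Rightarrow$ stable'' being routine and the converse requiring the $\Omega$-spectrum hypothesis in an essential way.

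First, assume $f$ is a strict weak equivalence. Then every component $f^n\colon X^n \to Y^n$ is a weak equivalence in $\ssetp$, so each induced map $\pi_{i+n}X^n \to \pi_{i+n}Y^n$ is an isomorphism. Since filtered colimits preserve isomorphisms and $\pi_i f$ is the sequential colimit over $n$ of these component maps, $\pi_i f$ is an isomorphism for every $i$, showing that $f$ is a stable weak equivalence. This direction does not use the $\Omega$-spectrum hypothesis.

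For the converse, assume $f$ is a stable weak equivalence in $\Omega\spectra$. The key observation, already implicit in \cref{htpy_omega}, is that for an $\Omega$-spectrum $X$ the transition maps $\pi_{i+n}X^n \to \pi_{i+n+1}X^{n+1}$ in the colimit system defining $\pi_i X$ are themselves isomorphisms: they factor as the suspension isomorphism followed by the map induced by $|\sigma^n|^\flat$, which is a weak equivalence in $\topo_*$ by the $\Omega$-spectrum condition (together with \cref{lem1} to move between simplicial and topological loops). Consequently each leg map $\pi_k X^n \to \pi_{k-n}X$ is an isomorphism for all $k,n \geq 0$, and similarly for $Y$. By naturality, $\pi_k f^n$ is identified under these isomorphisms with $\pi_{k-n}f$, which is an isomorphism by assumption. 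Thus $|f^n|$ induces an isomorphism on $\pi_k$ at the basepoint for every $k \geq 0$ and every $n \geq 0$.

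It remains to upgrade the basepoint homotopy group isomorphisms to an actual weak equivalence of pointed simplicial sets. Here the $\Omega$-spectrum hypothesis is used a second time: since $|X^n|$ and $|Y^n|$ are weakly equivalent to loop spaces $\Omega|X^{n+1}|$ and $\Omega|Y^{n+1}|$, they are grouplike $H$-spaces whose path components are all mutually homotopy equivalent via translation. The map $|f^n|$ restricts to a map of basepoint components inducing $\pi_k$-isomorphisms for all $k \geq 1$, and since geometric realizations are CW complexes, Whitehead's theorem gives a weak equivalence there. Combining this with the $\pi_0$-bijection (the $k=0$ case) and the $H$-space translation between components, $|f^n|$ is a weak equivalence on each component, hence a weak equivalence overall, so $f^n$ is a weak equivalence in $\ssetp$.

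The main obstacle I anticipate is the last step: passing from $\pi_k$-isomorphisms only at the basepoint to a global weak equivalence. For an arbitrary pointed space such a passage can fail, but the $\Omega$-spectrum condition supplies exactly the grouplike $H$-space structure needed to transport the basepoint information to every other component, at which point Whitehead's theorem concludes the argument.
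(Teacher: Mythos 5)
Your forward direction and the core of your converse are essentially the paper's own argument: the paper also uses the $\Omega$-spectrum condition, in the form of \cref{htpy_omega}, to identify every basepoint homotopy group $\pi_k X^n$ of a level with a group controlled by the stable equivalence ($\pi_{k-n}X^0$ when $k\geq n$, and $\pi_0 X^{n-k}$ when $k\leq n$), and concludes that a stable equivalence of $\Omega$-spectra induces isomorphisms on all basepoint homotopy groups in every level. Your reformulation via the colimit legs $\pi_k X^n\to \pi_{k-n}X$ being isomorphisms, plus naturality, is the same mechanism in different clothing.

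The one step that is not justified as written is the final translation step. For an arbitrary continuous map between grouplike $H$-spaces, a $\pi_0$-bijection together with isomorphisms on all homotopy groups at the basepoint does \emph{not} imply a weak equivalence: consider the self-map of the topological group $\mathds{Z}/2\times S^1$ which is the identity on the unit component and constant on the other component. To translate the basepoint information to the other components you need $|f^n|$ to be compatible, at least up to homotopy, with the $H$-space structures, and this is where the spectrum structure must be invoked once more: the square formed by $|f^n|$, $\Omega|f^{n+1}|$ and the adjoint structure maps $|\sigma^n|^\flat$ commutes, its horizontals are weak equivalences by the $\Omega$-spectrum condition, so $|f^n|$ is a weak equivalence if and only if the loop map $\Omega|f^{n+1}|$ is; a loop map commutes strictly with the translations given by concatenation, so for it your component-by-component argument is legitimate. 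With that supplement your last paragraph is correct, and in fact more explicit than the paper, which passes over non-basepoint components silently. (Also, Whitehead's theorem is not needed there: inducing isomorphisms on all homotopy groups at a basepoint of a path-connected component is already what it means to be a weak equivalence on that component.)
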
 

\begin{proof}
A strict weak equivalence always induces an equivalence on homotopy groups, so it is in particular a stable equivalence.

For the converse, assume $f$ is a stable weak equivalence; thus it induces isomorphisms $\pi_i X^0\cong\pi_i Y^0$ and $\pi_0 X^i\cong\pi_0 Y^i$ for all $i\geq 0$.

Now, consider $\pi_i X^n$ for any $i\geq 0, n\geq 0$; we claim that this can be reduced to one of the above cases. Indeed, if $n\geq i$, then 
\begin{align*}
    \pi_i X^n \cong \pi_0 \Omega^i \vert X^n\vert 
    \cong  \pi_0 X^{n-i},
\end{align*} 
and if $n\leq i$, then
\begin{align*}
    \pi_i X^n \cong  \pi_0 \Omega^i \vert X^n\vert 
    \cong  \pi_0 \Omega^{i-n}\Omega^n \vert X^n\vert
    \cong  \pi_0 \Omega^{i-n} \vert X^0\vert 
    \cong  \pi_{i-n} X^0
\end{align*} 
and thus one can see that $f$ induces isomorphisms on all homotopy groups.
\end{proof}

Let $\Ho(\Omega\spectra)^\text{strict}$ denote the full subcategory of $\Ho(\spectra)^\text{strict}$ whose objects are the $\Omega$-spectra. We can then show the following, as observed in \cite[\textsection 2.4]{BousfieldFriedlander}.

\begin{prop}\label{Ho_spectra_equiv_Ho_omega_spectra}
The functor $Q$ and the forgetful functor $U$ induce an equivalence 
\[\begin{tikzcd}
Q\colon \Ho(\spectra)^\text{stable}\rar[shift left=0.8ex] & \Ho(\Omega\spectra)^\text{strict}\colon U.\lar[shift left=0.8ex]
\end{tikzcd}\] 
\end{prop}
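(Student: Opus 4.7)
The plan is to use the natural transformation $\eta\colon \id \Rightarrow Q$, whose components are already shown to be stable weak equivalences, to witness both sides of the equivalence. The entire argument hinges on \cref{stable_iff_strict}: on $\Omega$-spectra, stable and strict weak equivalences coincide, and this is precisely what allows $Q$ and $U$ to pass cleanly between the two localizations.

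First I would check that $Q$ descends to a functor $\Ho(\spectra)^{\text{stable}} \to \Ho(\Omega\spectra)^{\text{strict}}$. Given a stable weak equivalence $f\colon X \to Y$ in $\spectra$, the naturality square of $\eta$ has both vertical legs $\eta_X$ and $\eta_Y$ as stable weak equivalences, so by the 2-out-of-3 property $Qf$ is a stable weak equivalence between the $\Omega$-spectra $QX$ and $QY$. By \cref{stable_iff_strict}, $Qf$ is then a strict weak equivalence, hence an isomorphism in $\Ho(\Omega\spectra)^{\text{strict}}$, and the universal property of localization produces the required functor. In the other direction, $U$ descends to $\Ho(\Omega\spectra)^{\text{strict}} \to \Ho(\spectra)^{\text{stable}}$ by the easy half of \cref{stable_iff_strict}: every strict weak equivalence induces isomorphisms on all homotopy groups, and is therefore stable.

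For the equivalence itself, $\eta$ provides the natural isomorphisms after localization. Since each component $\eta_X\colon X \to UQX$ is by construction a stable weak equivalence, $\eta$ becomes a natural isomorphism $\id \cong UQ$ on $\Ho(\spectra)^{\text{stable}}$. On the other side, for $X$ an $\Omega$-spectrum, $\eta_X\colon X \to QUX = QX$ is a stable weak equivalence between two $\Omega$-spectra, and \cref{stable_iff_strict} upgrades it to a strict weak equivalence, yielding a natural isomorphism $\id \cong QU$ on $\Ho(\Omega\spectra)^{\text{strict}}$. The main point requiring care is the well-definedness of the two functors on homotopy categories rather than any deep new input: all the heavy lifting, namely the construction of $Q$, the verification that each $\eta_X$ is a stable weak equivalence, and \cref{stable_iff_strict}, has already been carried out, so the proposition reduces to a formal assembly of these facts.
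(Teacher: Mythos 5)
Your proposal is correct and follows essentially the same route as the paper: use the naturality square of $\eta$ together with \cref{stable_iff_strict} to see that $Q$ and $U$ preserve the relevant weak equivalences and hence descend to the localizations, then use $\eta$ itself (a stable weak equivalence, and a strict one on $\Omega$-spectra) to furnish the two natural isomorphisms. The only cosmetic difference is that the paper phrases the second isomorphism as a counit $\epsilon_X=\eta_X^{-1}$ in $\Ho(\Omega\spectra)^{\text{strict}}$, whereas you invert implicitly; the content is identical.
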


\begin{proof}
First, note that if $f\colon X\to Y$ is a stable weak equivalence in $\spectra$, then the diagram 
\[\begin{tikzcd}
    X\dar["f"']\rar["\eta_X"] & QX\dar["Qf"]\\
    Y\rar["\eta_Y"] & QY
\end{tikzcd}\] 
together with the fact that $\eta_X$ and $\eta_Y$ are stable weak equivalences, imply that $Qf$ is a stable weak equivalence. Hence, by  \cref{stable_iff_strict}, we see that $Q$ preserves weak equivalences and therefore induces a functor on homotopy categories. Similarly, the same lemma shows that $U$ preserves weak equivalences. 

To obtain the equivalence, we define $\eta\colon 1\Rightarrow UQ$ to be the unit (note that we slightly abuse the notation by sometimes making the forgetful functor explicit). The natural transformation $\eta$ is a degreewise stable weak equivalence and thus a natural isomorphism in $\Ho(\spectra)^\text{stable}$. For the counit, let $\epsilon\colon QU\Rightarrow 1$ be defined by $\epsilon_X=\eta_X^{-1}$; this inverse exists in $\Ho(\Omega\spectra)^\text{strict}$ since, by \cref{stable_iff_strict}, the maps $\eta_X$ are strict weak equivalences when restricted to $\Omega$-spectra $X$. This shows that we indeed obtain an equivalence of homotopy categories.
\end{proof}

\subsection{The model category of $\Gamma$-spaces}

Let $\Gamma$ denote the category of finite pointed sets and pointed maps. For $n\geq 0$, we denote by $\langle n\rangle$ the set $\{0,1,\dots,n\}$ with $0$ as the basepoint. We write $\und{n}$ for the subset of $\ord{n}$ consisting of $\{1,\ldots,n\}$. 

\begin{defn}
Let $\C$ be a pointed category, with $*$ as its zero object, i.e., an object which is both inital and terminal. A \emph{$\Gamma$-object} in $\C$ is a functor $\Gamma\to\C$ mapping $\langle 0\rangle$ to $*$. We denote the category of $\Gamma$-objects in $\C$ and natural transformations by $\Gamma\C$. In particular, a \emph{$\Gamma$-space} is a $\Gamma$-object in the category $\ssetp$. 
\end{defn}

We are interested in a specific type of $\Gamma$-spaces, the \emph{very special} ones, which we now recall. Since the definition makes sense in a broader context, we will extend it. We will later use it for Tamsamani $n$-categories as well.

\begin{defn} \label{DefSpecial}
Let $\C$ be a pointed category with a distinguished class $\clW$ of weak equivalences. 
A $\Gamma$-object $A$ in $\C$ is called \emph{special} if, for each $n\geq 0$, the map $$A\langle n\rangle \to A\langle 1 \rangle\times\dots\times A \langle 1 \rangle =A\langle 1 \rangle^{\times n},$$ induced by the indicator maps $\nu_j\colon\langle n\rangle\to\langle 1\rangle$ taking only the element $j$ to $1$ and all the other elements to $0$, is a weak equivalence in $\clW$.
\end{defn}

We will explain the relation between these $\nu_j$ and the maps $\nu^j$ from \cref{def-seg-map} later. For the moment, we acknowledge that the similarity in the notation is not a coincidence.

We call a $\Gamma$-space \emph{special} applying this definition to the category of pointed simplicial sets and weak equivalences. Note that applying this definition to the category of pointed sets with isomorphisms as equivalences gives us precisely an abelian monoid $A\langle 1\rangle$. This is the main motivation for the definition of special $\Gamma$-objects. The next remark exhibits a weaker but similar behaviour of $\Gamma$-spaces. 

\begin{rmk}\label{RmkMonoid}
For a special $\Gamma$-space $A$, the set $\pi_0 A\langle 1 \rangle$ becomes an abelian monoid, with multiplication 
$$\pi_0 A\langle 1 \rangle\times \pi_0 A\langle 1 \rangle\xleftarrow{(\nu_1, \nu_2)}\pi_0 A\langle 2 \rangle\xrightarrow{m}\pi_0 A\langle 1 \rangle$$ 
where $m\colon\langle 2\rangle\to\langle 1\rangle$ is the map in $\Gamma$ that sends $1,2$ to $1$, and the left-hand map is invertible by definition of a special $\Gamma$-space. 
\end{rmk}

\begin{defn}
A $\Gamma$-space $A$ is \emph{very special} if it is special, and the abelian monoid $\pi_0 A\langle 1\rangle$ is a group. 
\end{defn}

\begin{defn}
A morphism of $\Gamma$-spaces $f\colon A\to B$ is a \emph{strict weak equivalence} if each component $f_n\colon A\langle n\rangle\to B\langle n\rangle$ is a weak equivalence in $\ssetp$.
\end{defn}

\begin{thm}[{\cite[Thm.\ 3.5]{BousfieldFriedlander}}]
The category $\Gsset$ admits a model category structure, in which the weak equivalences are the strict weak equivalences. This is called the strict model structure, and we will denote its homotopy category by $\Ho\Gsset^\text{strict}$.
\end{thm}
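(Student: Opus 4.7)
The plan is to establish the strict model structure on $\Gsset$ via a projective-style construction, transferring the Kan--Quillen model structure on $\ssetp$ levelwise. First I would identify $\Gsset$ with the category $[\Gamma_{>0},\ssetp]$ of all functors from the full subcategory $\Gamma_{>0}\subset \Gamma$ (obtained by omitting $\ord{0}$) to $\ssetp$: every such functor extends uniquely to a pointed functor on $\Gamma$ by sending $\ord{0}$ to the zero object $*$. Under this identification, limits and colimits of $\Gamma$-spaces are computed objectwise in $\ssetp$, so $\Gsset$ is complete and cocomplete.

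Next, I would define a morphism $f\colon A\to B$ in $\Gsset$ to be a \emph{strict fibration} when each component $f_{\ord{n}}$ is a Kan fibration in $\ssetp$, and take cofibrations to be those maps with the left lifting property against strict trivial fibrations. The two-out-of-three axiom and closure under retracts for strict weak equivalences and strict fibrations are immediate from the pointwise definitions and the corresponding axioms in $\ssetp$.

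For the factorization and lifting axioms, I would apply the small object argument to the generating sets $\{F_n(i)\mid n\geq 1,\ i\in I\}$ and $\{F_n(j)\mid n\geq 1,\ j\in J\}$, where $I$ and $J$ are the generating cofibrations and trivial cofibrations of $\ssetp$, and $F_n\colon \ssetp\to \Gsset$ is the left adjoint to evaluation at $\ord{n}$ (extending by $*$ at $\ord{0}$). By the adjunction $F_n\dashv \mathrm{ev}_n$, a map in $\Gsset$ has the right lifting property against these sets if and only if each of its levels has the right lifting property against $I$ (resp.\ $J$) in $\ssetp$, i.e., is a strict (trivial) fibration; in particular, the two classes of generators cofibrantly generate the candidate model structure.

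The main obstacle is verifying the smallness of the domains of the generating (trivial) cofibrations relative to the classes of relative cell complexes, so that the small object argument produces the required functorial factorizations. This reduces to smallness of the corresponding domains in $\ssetp$ (the simplices $\partial\Delta[k]_+,\Delta[k]_+,\Lambda^i[k]_+$ are all finite) together with the fact that $F_n$ preserves colimits and that sequential colimits in $\Gsset$ are formed objectwise; once the factorizations are in place, the characterization of strict (trivial) fibrations by lifting against the generators supplies the remaining lifting axioms and closes the verification.
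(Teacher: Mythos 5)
The paper itself gives no argument for this statement: it is imported verbatim from Bousfield--Friedlander, so any self-contained construction is by definition a different route. Your route --- the cofibrantly generated, levelwise-fibration (projective-type) structure created from the Kan--Quillen structure on $\ssetp$ via the adjunctions $F_n\dashv \mathrm{ev}_n$ and the small object argument --- is a standard and workable way to produce a model structure on $\Gsset$ whose weak equivalences are exactly the strict weak equivalences, which is all the quoted statement asserts; and since the homotopy category $\Ho(\Gsset)^{\text{strict}}$ depends only on the class of weak equivalences, it serves the paper's later uses. However, two steps of your sketch need repair.

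First, the opening identification $\Gsset\cong[\Gamma_{>0},\ssetp]$ is false. A morphism $\ord{n}\to\ord{m}$ that factors through $\ord{0}$ in $\Gamma$ is still a morphism of $\Gamma_{>0}$, and an arbitrary functor $B\colon\Gamma_{>0}\to\ssetp$ need not send it to the constant map; for instance the constant functor at some $X\neq *$ with all structure maps the identity is a functor on $\Gamma_{>0}$ that admits no extension to a reduced functor on $\Gamma$. So reduced $\Gamma$-spaces form only a full subcategory of $[\Gamma_{>0},\ssetp]$, namely the functors preserving zero morphisms. The fix is to work with $\Gsset$ directly: it is complete and cocomplete with levelwise (co)limits because $*$ is a zero object of $\ssetp$, and evaluation at $\ord{n}$ still has a left adjoint, given by $F_n(K)\ord{m}=\Gamma(\ord{n},\ord{m})\wedge K$ with the hom-set pointed by the zero map (so in particular $F_n(K)\ord{0}=*$); this is exactly what your adjunction argument characterizing maps with the right lifting property against $\{F_n(i)\}$ and $\{F_n(j)\}$ requires.

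Second, the ``main obstacle'' is misplaced: smallness of the domains is immediate, precisely for the reasons you give (finite simplicial sets, colimits in $\Gsset$ computed levelwise, $F_n$ a left adjoint). The substantive step your sketch omits is the verification that relative $F_n(J)$-cell complexes are strict weak equivalences. This is needed to see that the second factorization is (trivial cofibration, fibration) and, via the retract argument, that every trivial cofibration has the left lifting property against strict fibrations; without it the lifting axiom does not ``close the verification'' automatically. It does hold: each $F_n(j)$ is, in every level $\ord{m}$, a wedge of copies of $j$ indexed by the nonzero maps $\ord{n}\to\ord{m}$, hence a levelwise trivial cofibration, and levelwise trivial cofibrations are closed under pushouts and transfinite composition because colimits in $\Gsset$ are levelwise. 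With these two repairs your construction is complete; whether its fibrations and cofibrations literally coincide with the explicitly described (and $\Sigma_n$-sensitive) classes of Bousfield--Friedlander is a separate question that the quoted statement does not require you to settle.
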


\begin{rmk}
This model structure should not  be confused with the stable model structure on $\Gsset$, in which the weak equivalences are the maps that induce isomorphisms on the homotopy groups of the $\Gamma$-spaces, as opposed to these strict weak equivalences which induce isomorphisms on the homotopy groups of each of the component spaces.
\end{rmk}

Both fibrations and cofibrations admit an explicit description in this model structure, which we will not give here; all details can be found in \cite[\textsection 3]{BousfieldFriedlander}. 

The careful reader might note that strict weak equivalences in \cite{BousfieldFriedlander} involve a $\Sigma_n$-equivariance condition which we have chosen not to include in our definition above. The reason we can omit it is that this equivariance is not an additional property of a map of $\Gamma$-spaces, it is automatic. Indeed, the automorphism group of $\langle n\rangle$ in $\Gsset$ is precisely $\Sigma_n$, and so a natural transformation between functors from $\Gamma$ to $\ssetp$ must have $\Sigma_n$-equivariant maps as its component maps. The symmetric group needs to be involved, however, when defining the cofibrations.

\subsection{Equivalences of homotopy categories of spectra and $\Gamma$-spaces}

Any $\Gamma$-space $A\colon\Gamma\to\ssetp$ can be extended to a functor $A\colon \ssetp\to\ssetp$ (cf.\ \cite{BousfieldFriedlander}, \cite[Const.\ B.20]{SchwedeGlobal}) by letting, for each $K\in\ssetp$, 
$$A(K)=\int^{\langle n\rangle\in\Gamma} K^{\times n}\times A\langle n\rangle.$$ 

More explicitly, we can first extend the domain of $A$ to all pointed sets and obtain a functor $$A\colon\set_*\to\ssetp$$ by defining, for each $W\in\set_*$, $$A(W)=\colim_{V\subseteq W} A(V)$$ where $V$ ranges among finite pointed sets. It can then be further extended to a functor $$A\colon \ssetp\to\ssetp$$ by setting $(AK)_n=(AK_n)_n$ for $n\geq 0$ and $K\in\ssetp$, with obvious face and degeneracy maps.

Finally, our functor on pointed simplicial sets defines a functor on spectra $$A\colon \spectra\to\spectra$$ by defining, for a spectrum $X$, $(AX)^n=A(X^n)$, with suspension maps 
$$\Sigma A(X^n)\to A(\Sigma X^n)\to A(X^{n+1}),$$
where the first map is induced by the natural map of the form $K\wedge A(L)\to A(K\wedge L)$. (For a further explanation of this assembly map in the case of based topological spaces, we refer the reader to \cite[Prop.\ B.27]{SchwedeGlobal}.) 

The discussion in the above paragraphs allows us to define a functor $$S\colon \Gsset\to\spectra$$ mapping a $\Gamma$-space $A$ to the spectrum $A(\S)$, where $\S$ is the sphere spectrum. 
Moreover, there exists a functor in the other direction, $$\Phi(\S,-)\colon \spectra\to\Gsset$$ defined on each finite set $V$ and each spectrum $X$ by $\Phi(\S,X)(V)=\HOM(\S^V,X)$, where $\S^V=\S\times\S\times\dots\times\S$ is indexed by the non-basepoint elements in $V$.

\begin{lem}[{\cite[Lemma 4.6]{BousfieldFriedlander}}]\label{adjunction}
The functors $S$ and $\Phi(\S,-)$ are adjoints 
\[\begin{tikzcd}[column sep=small]
S\colon \Gsset\ar[rr, shift left=1.1ex] & \mbox{\scalebox{0.8}{$\bot$}} & \spectra\colon\Phi(\S,-).\ar[ll, shift left=1.1ex] 
\end{tikzcd}\]
\end{lem}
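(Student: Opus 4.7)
The plan is to exhibit a natural bijection
\[
\Hom_{\spectra}(S(A), X) \cong \Hom_{\Gsset}(A, \Phi(\S, X))
\]
for every $\Gamma$-space $A$ and every spectrum $X$, by unpacking both sides and recognising them as the same collection of data under standard enriched hom-adjunctions. The abstract reason behind the adjunction is that $S$ can be viewed as a tensor product over $\Gamma$ with the functor $\ord{m} \mapsto \S^{\wedge m}$, whose enriched right adjoint is precisely $\Phi(\S,-)$.

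Unpacking the left-hand side, a morphism $f \colon S(A) \to X$ is a sequence of pointed maps $f^n \colon A(S^n) \to X^n$ compatible with the suspension maps of $S(A) = A(\S)$ and of $X$. Using the coend formula $A(K) = \int^{\ord{m}} K^{\wedge m} \wedge A\ord{m}$ defining the extension of $A$ to $\ssetp$, together with the smash--mapping space adjunction in $\ssetp$, giving $f^n$ is the same as specifying a family of pointed maps $A\ord{m} \to \HOM((S^n)^{\wedge m}, X^n)$ natural in $\ord{m} \in \Gamma$. The compatibility with suspension across $n$ then translates into the statement that, for each $\ord{m}$, these maps over all $n$ assemble into a single map of pointed simplicial sets into the mapping spectrum component $\HOM(\S^{\wedge m}, X)$.

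Unpacking the right-hand side, a morphism $g \colon A \to \Phi(\S, X)$ of $\Gamma$-spaces is by definition a natural family of maps $A\ord{m} \to \HOM(\S^{\wedge m}, X)$; this is exactly the data just isolated on the left. The assignment $f \leftrightarrow g$ produced by this identification is the desired bijection, and naturality in $A$ and $X$ is automatic from naturality of the coend and of the smash--hom adjunctions used.

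The main obstacle will be verifying the coherence between two a priori different structures: on the left, the suspension $\Sigma A(S^n) \to A(S^{n+1})$ on $S(A)$ is built from the natural assembly map $K \wedge A(L) \to A(K \wedge L)$, while on the right, the $\Gamma$-structure on $\Phi(\S, X)$ comes from functoriality of $V \mapsto \S^{V}$ together with the suspension maps of $X$. Checking that the bijection respects both structures reduces to a diagram chase showing that the assembly map is exactly what is needed to intertwine the suspension-compatibility on the spectrum side with the $\Gamma$-naturality on the $\Gamma$-space side. This step is routine, but the bookkeeping is where essentially all the content of the lemma lies.
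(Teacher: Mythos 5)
The paper itself does not prove this lemma: it is quoted from Bousfield--Friedlander, and the only commentary offered is that their Lemma 4.6 in fact gives a simplicially enriched adjunction isomorphism $\HOM_{\spectra}(SA,X)\cong\HOM_{\Gsset}(A,\Phi(\S,X))$, from which the stated adjunction of categories follows by passing to $0$-simplices. So your choice to argue directly, via the coend description of the prolongation and the tensor/cotensor (co-Yoneda) correspondence, is a legitimately different route from the paper's citation, and in outline it is the standard argument.

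Two points in your sketch do not typecheck in the setting of this paper and must be repaired before the plan can be executed. First, the extension of $A$ is $A(K)=\int^{\ord{m}\in\Gamma}K^{\times m}\times A\ord{m}$, where $K^{\times m}$ is the $m$-fold \emph{product}, i.e.\ levelwise the set of pointed maps $\ord{m}\to K$, contravariant in $\ord{m}$ by precomposition; the smash power $K^{\wedge m}$ you use instead is not contravariantly functorial on $\Gamma$ (for instance the pointed map $\ord{1}\to\ord{2}$ sending $1$ to $1$ induces on products the first projection $K\times K\to K$, which does not descend to a map $K\wedge K\to K$), so the coend $\int^{\ord{m}}K^{\wedge m}\wedge A\ord{m}$ is not defined. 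Second, sequential spectra carry no smash product, so $\HOM_{\spectra}(\S^{\wedge m},X)$ has no meaning here; the right-hand $\Gamma$-space is $\Phi(\S,X)(V)=\HOM_{\spectra}(\S^{V},X)$ with $\S^{V}$ the \emph{product} of copies of $\S$ indexed by the non-basepoint elements of $V$, again contravariant in $V$. With the product-based formulas the levelwise identification you describe does go through, but the step you defer as ``routine bookkeeping'' --- that the assembly maps giving the suspension of $A(\S)$ correspond, under the hom adjunctions, to the structure maps of $X$ and the functoriality of $V\mapsto\S^{V}$ --- is precisely the content of the cited lemma, and the basepoint care forced by using products rather than smashes lives exactly there. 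If your argument is intended to replace the citation rather than merely motivate it, that verification has to be carried out explicitly; otherwise the honest proof is the one the paper gives, namely the reference to Bousfield--Friedlander.
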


In fact, \cite[Lemma 4.6]{BousfieldFriedlander}  is more powerful, as it proves a simplicial version of this, which reduces to our statement by restricting to level 0. 

Once we have established these functors, it is possible to make the following definition.

\begin{defn}
The \emph{$i$th homotopy group} of a $\Gamma$-space $A$ is defined as $$\pi_i A=\pi_i SA;$$ that is, as the $i$th homotopy group of the spectrum obtained by applying the functor $S$ to $A$. 
\end{defn}

Recalling that the categories $\spectra$ and $\Gsset$ admit a strict model structure as described in the previous subsections,  it is not hard to show (see the discussion on p.\ 103 of \cite{BousfieldFriedlander}) that the adjunction $S\dashv\Phi(\S,-)$ from \cref{adjunction} is in fact a Quillen equivalence between the two strict model structures; in particular, we get equivalences of homotopy categories
\[\begin{tikzcd}
LS\colon \Ho(\Gsset)^\text{strict}\rar[shift left=0.8ex] & \Ho(\spectra)^\text{strict}\colon R\Phi(\S,-).\lar[shift left=0.8ex]
\end{tikzcd}\] 
Here $LS=S$ and $R\Phi(\S,X)=\Phi(\S,\hat{X})$ where $\hat{X}$ is a functorial fibrant replacement of $X$ in the strict model structure.

Furthermore, if we restrict these homotopy categories to the full subcategories given by very special $\Gamma$-spaces and connective $\Omega$-spectra, which we respectively denote as $\vsGsset$ and $\cospectra$, we obtain the following.

\begin{thm}[{\cite[Thm.\ 5.1]{BousfieldFriedlander}}]\label{BF}
The adjoint functors $LS\dashv R\Phi(\S,-)$ restrict to an adjoint equivalence 
\[\begin{tikzcd}
LS\colon \Ho(\vsGsset)^\text{strict}\rar[shift left=0.8ex] & \Ho(\cospectra)^\text{strict}\colon R\Phi(\S,-).\lar[shift left=0.8ex]
\end{tikzcd}\]
\end{thm}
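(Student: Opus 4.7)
The plan is to bootstrap from the already-established Quillen equivalence $S \dashv \Phi(\S,-)$ between $\Gsset$ and $\spectra$ in the strict model structures. Since we have natural weak equivalences $A \to R\Phi(\S, LSA)$ and $LS R\Phi(\S, X) \to X$ for general $\Gamma$-spaces $A$ and spectra $X$, it suffices to show that the two functors restrict to the indicated subcategories (up to strict weak equivalence). Concretely, I would establish two containments: (i) if $A \in \vsGsset$ is very special, then $SA$ lies in $\cospectra$, and (ii) if $X \in \cospectra$, then a strict fibrant replacement $\hat X$ satisfies $\Phi(\S, \hat X) \in \vsGsset$.

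For step (i), the connectivity of $SA$ is immediate: since $(SA)^n = A(S^n)$ is built as a coend / colimit of pointed simplicial sets starting in simplicial degree $0$, we have $\pi_i SA = 0$ for $i < 0$. The $\Omega$-spectrum condition is the classical theorem of Segal: for a special $\Gamma$-space $A$, the adjoints $|A(S^n)| \to \Omega |A(S^{n+1})|$ are weak equivalences for $n \geq 1$, and for $n = 0$ the very special hypothesis (namely, that $\pi_0 A\langle 1\rangle$ is a group, so group completion is trivial) upgrades this to a weak equivalence as well. I would invoke this as the key input, following the proof strategy that uses the bar construction $B(A(S^0)) \simeq A(S^1)$ and the fact that $A$ is special to identify the iterated Segal maps with weak equivalences.

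For step (ii), given a connective $\Omega$-spectrum $X$, the defining Segal-type map for $\Phi(\S, \hat X)$ at level $\ord{n}$ is
\[
\HOM(\S^n, \hat X) \to \HOM(\S, \hat X)^{\times n},
\]
which is a weak equivalence because $\S^n = \S \vee \cdots \vee \S$ splits and $\HOM(-,\hat X)$ sends finite wedges to products (this uses that $\hat X$ is fibrant in the strict model structure, hence levelwise Kan). Specialness is therefore clear. To see that $\pi_0 \Phi(\S, \hat X)\langle 1 \rangle$ is a group, note that $\pi_0 \Phi(\S, \hat X)\langle 1\rangle \cong \pi_0 \hat X^0 \cong \pi_0 X$, and since $X$ is an $\Omega$-spectrum, $\pi_0 X \cong \pi_n \Omega^n X^0 \cong \pi_n X^n$, which is an abelian group for $n \geq 1$. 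So the $\Gamma$-space $\Phi(\S, \hat X)$ is very special.

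The main obstacle is step (i): proving that $SA$ is actually an $\Omega$-spectrum when $A$ is very special. The connectivity and specialness parts are formal, but the structure-map weak equivalence at the bottom level $|A(S^0)| \to \Omega |A(S^1)|$ requires the full strength of Segal's group-completion / delooping argument, where the hypothesis that $\pi_0 A\langle 1\rangle$ is a group (not merely an abelian monoid) is exactly what prevents the appearance of a nontrivial group completion. Once both (i) and (ii) are in place, the equivalence on homotopy categories is immediate: the unit $A \to R\Phi(\S, SA)$ and counit $LS R\Phi(\S, \hat X) \to \hat X$ from the ambient Quillen equivalence are strict weak equivalences, and the previous two steps show the relevant objects stay within $\vsGsset$ and $\cospectra$ after applying $LS$ or $R\Phi(\S,-)$, so the restriction is an adjoint equivalence as claimed.
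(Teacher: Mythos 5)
This theorem is not proved in the paper at all: it is imported verbatim as Theorem 5.1 of Bousfield--Friedlander, so the only ``paper proof'' to compare against is the citation itself. Your sketch is, in substance, a pr\'ecis of Bousfield--Friedlander's (and Segal's) own argument: restrict the ambient strict Quillen equivalence $S\dashv\Phi(\S,-)$, check that $S$ sends very special $\Gamma$-spaces to connective $\Omega$-spectra, and that $\Phi(\S,-)$ of a strictly fibrant connective $\Omega$-spectrum is very special. That is the right decomposition, but note that your step (i) -- the delooping statement that $|A(S^0)|\to\Omega|A(S^1)|$ is a weak equivalence when $\pi_0A\ord{1}$ is a group, and $|A(S^n)|\to\Omega|A(S^{n+1})|$ is one for $n\geq 1$ when $A$ is special -- is precisely the content of the cited result (Segal's delooping / BF Thm.\ 4.2), and you invoke it as a black box. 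So your proposal reduces the theorem to itself plus formal bookkeeping; as an outline it is faithful to the source, but it is not an independent proof, which is exactly the status the paper assigns to the statement.

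Two smaller points deserve tightening. In step (ii), specialness of $\Phi(\S,\hat X)$ does not follow from $\hat X$ being levelwise Kan alone: with the paper's definition $\S^V=\S\times\cdots\times\S$, the comparison with the wedge $\S\vee\cdots\vee\S$ is only a \emph{stable} equivalence, so you need $\hat X$ to be a strictly fibrant $\Omega$-spectrum for $\HOM(-,\hat X)$ to carry it to a weak equivalence; this hypothesis is available in your setting (you start from a connective $\Omega$-spectrum), but it, rather than Kan-ness, is what makes the Segal maps equivalences. Similarly, connectivity of $SA$ is not because the levels ``start in simplicial degree $0$'': an arbitrary spectrum of simplicial sets need not be connective, since $\pi_0 X^j$ contributes to $\pi_{-j}X$. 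The correct reason is that $A(S^n)$ has only the basepoint in simplicial degrees below $n$, hence is $(n-1)$-connected, which is Bousfield--Friedlander's Lemma 4.2 (the same lemma the paper later quotes for this purpose). With these repairs your outline matches the standard proof; the genuinely nontrivial input remains the group-completion/delooping theorem, which neither you nor the paper reprove.
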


In particular, we have that $SA$ is always an $\Omega$-spectrum when applied to a very special $\Gamma$-space $A$, and that $R\Phi(\S,X)=\Phi(\S,\hat{X})$ is a very special $\Gamma$-space when applied to a strictly fibrant $\Omega$-spectrum $X$. This allows us to deduce the following useful result.

\begin{prop}\label{htpy_level1}
If $A$ is a very special $\Gamma$-space, then $\pi_i A\cong\pi_i A\langle 1\rangle$.
\end{prop}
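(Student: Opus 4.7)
The plan is to string together three observations: very specialness of $A$ forces $SA$ to be an $\Omega$-spectrum, the homotopy groups of an $\Omega$-spectrum are controlled by its zeroth level, and the zeroth level of $SA$ is canonically $A\langle 1\rangle$.

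First, I would invoke \cref{BF}: since $A$ is very special, the spectrum $SA = A(\S)$ lies in $\cospectra$, in particular it is an $\Omega$-spectrum. By definition $\pi_i A = \pi_i SA$, so we may apply \cref{htpy_omega} and conclude that $\pi_i SA \cong \pi_i (SA)^0$ for $i\geq 0$, and $\pi_i SA \cong \pi_0 (SA)^{-i} = 0$ for $i<0$ (the latter vanishing because $SA$ is connective, and matching $\pi_i A\langle 1\rangle = 0$ by the usual convention on homotopy groups of simplicial sets in negative degrees). So it only remains to identify $(SA)^0$ with $A\langle 1\rangle$ as pointed simplicial sets, at least up to weak equivalence.

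For this, recall that $(SA)^0 = A(\S^0) = A(S^0)$, where $A$ on the right is the extension of the $\Gamma$-space $A$ to pointed simplicial sets. I would chase the definition: the simplicial set $S^0$ is discrete with $S^0_n = \langle 1\rangle$ in every degree, so the first extension $A\colon\set_*\to\ssetp$ evaluated at $\langle 1\rangle$ (which is already a finite pointed set, hence the filtered colimit is trivial) yields $A\langle 1\rangle$. The second extension then produces the simplicial set with $n$-simplices $(A\langle 1\rangle)_n$ and the obvious face and degeneracy maps induced from the simplicial structure of $A\langle 1\rangle$, giving $A(S^0) \cong A\langle 1\rangle$ in $\ssetp$.

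Combining these steps, we obtain $\pi_i A = \pi_i SA \cong \pi_i (SA)^0 = \pi_i A\langle 1\rangle$ for $i \geq 0$, and both sides vanish for $i<0$. The main obstacle is the bookkeeping in the last paragraph: one must be careful that the two successive extensions of $A$ (from $\Gamma$ to $\set_*$ and then from $\set_*$ to $\ssetp$) really do reproduce $A\langle 1\rangle$ on the discrete simplicial set $S^0$, rather than some coend or colimit that only agrees up to weak equivalence. Once this identification is in place, the rest of the argument is a direct application of the results already in the excerpt.
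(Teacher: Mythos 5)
Your argument is correct and follows the same route as the paper's proof: the chain $\pi_i A = \pi_i SA \cong \pi_i (SA)^0 = \pi_i A(S^0) = \pi_i A\langle 1\rangle$, using that $SA$ is a connective $\Omega$-spectrum for very special $A$ together with \cref{htpy_omega}, with negative degrees vanishing by connectivity. The only difference is that you spell out the identification $A(S^0)\cong A\langle 1\rangle$, which the paper simply records as holding by definition of the extension of $A$.
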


\begin{proof}
For each $i\geq0$, we see that 
    \begin{align*}
        \pi_i A  = \pi_i SA \cong \pi_i (SA)^0 = \pi_i A(\S)^0 = \pi_i A(S^0) =\pi_i A\langle 1\rangle,
    \end{align*}
where the second isomorphism uses \cref{htpy_omega} and the fact that $SA$ is an $\Omega$-spectrum by \cite[Pf.\ of Thm.\ 5.1]{BousfieldFriedlander}, and the equalities hold by definition. Note that $\pi_i A=\pi_i SA=0$ for $i<0$, since $SA$ is connective. 
\end{proof}

We wish to show that the above adjoint equivalence further restricts to $n$-types on both sides. Let $\ospectran$ denote the full subcategory of connective $\Omega$-spectra whose homotopy groups are concentrated in $[0,n]$. Similarly, let $\vsGssetn$ denote the full subcategory of very special $\Gamma$-$n$-types; that is, the subcategory of very special $\Gamma$-spaces $A$ such that the homotopy groups of each space $A\langle m\rangle$ are concentrated in $[0,n]$. This is indeed unambiguous, as we observe next.

\begin{rmk}\label{htpy_special}
If $A$ is a special $\Gamma$-space, then for any $m\geq 0$, the homotopy groups of the pointed space $A\langle m\rangle$ are concentrated in $[0,n]$ if and only if the homotopy groups of $A\langle 1 \rangle$ are.

Together with \cref{htpy_level1}, this implies that our category $\vsGssetn$ is the same as the category $(\vsGsset)_{[0,n]}$.
\end{rmk}

With this phrasing, we can now formulate the following result.
 
\begin{thm}\label{equiv_gamma_and_spectra}
The adjoint functors $LS\dashv R\Phi(\S,-)$ restrict to an adjoint equivalence 
\[\begin{tikzcd}
LS\colon \Ho(\vsGssetn)^\text{strict}\rar[shift left=0.8ex] & \Ho(\ospectran)^\text{strict}\colon R\Phi(\S,-).\lar[shift left=0.8ex]
\end{tikzcd}\]
\end{thm}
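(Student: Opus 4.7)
The plan is to deduce this theorem directly from \cref{BF} by showing that the adjoint equivalence $LS \dashv R\Phi(\S,-)$ sends the full subcategories $\vsGssetn \subset \vsGsset$ and $\ospectran \subset \cospectra$ to one another, and then invoking the standard fact that an adjoint equivalence restricts to any pair of full subcategories preserved by both functors (up to weak equivalence).

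First, I would verify that $LS$ takes $\vsGssetn$ into $\ospectran$. Let $A \in \vsGssetn$. By \cref{BF}, $SA$ is already a connective $\Omega$-spectrum, so it suffices to compute its homotopy groups. By definition $\pi_i SA = \pi_i A$, and by \cref{htpy_level1} we have $\pi_i A \cong \pi_i A\langle 1\rangle$, which is concentrated in $[0,n]$ by hypothesis on $A$. Thus $SA \in \ospectran$.

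Next, I would verify that $R\Phi(\S,-)$ takes $\ospectran$ into $\vsGssetn$. Let $X \in \ospectran$ and set $R\Phi(\S,X) = \Phi(\S,\hat X)$, where $\hat X$ is a strictly fibrant replacement of $X$. By \cref{BF}, $\Phi(\S, \hat X)$ is very special; in view of \cref{htpy_special}, we only need to check that the homotopy groups of $\Phi(\S,\hat X)\langle 1\rangle$ are concentrated in $[0,n]$. By \cref{htpy_level1} applied to the very special $\Gamma$-space $\Phi(\S,\hat X)$, this is the same as the homotopy of $\Phi(\S,\hat X)$ as a $\Gamma$-space, which equals $\pi_i SR\Phi(\S,\hat X)$ by definition. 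But the counit $\epsilon_X : LSR\Phi(\S,X) \to X$ of the adjoint equivalence of \cref{BF} is a strict weak equivalence on connective $\Omega$-spectra, so $\pi_i SR\Phi(\S,\hat X) \cong \pi_i X$, which lies in $[0,n]$. Hence $R\Phi(\S,X) \in \vsGssetn$.

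Finally, I would assemble the equivalence. Since $\vsGssetn$ and $\ospectran$ are full subcategories defined by a property (location of homotopy groups) invariant under strict weak equivalences, the homotopy categories $\Ho(\vsGssetn)^{\text{strict}}$ and $\Ho(\ospectran)^{\text{strict}}$ embed as full subcategories of $\Ho(\vsGsset)^{\text{strict}}$ and $\Ho(\cospectra)^{\text{strict}}$, respectively. By the two previous steps, the derived functors $LS$ and $R\Phi(\S,-)$ send these full subcategories into each other. The unit and counit of the equivalence of \cref{BF} restrict to natural isomorphisms on these subcategories, so we obtain the desired adjoint equivalence. The main obstacle, which is mild, is the bookkeeping on homotopy groups through the fibrant replacement $\hat X$; this is handled cleanly by combining \cref{htpy_level1}, \cref{htpy_special}, and the fact that the counit $\epsilon_X$ is a strict equivalence, so no delicate computation is required.
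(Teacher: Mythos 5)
Your proposal is correct and follows essentially the same route as the paper: both directions are checked by the same homotopy-group computations, using \cref{htpy_level1} and \cref{htpy_special} together with the fact that $SA$ is a connective $\Omega$-spectrum and that the derived counit $S\Phi(\S,\hat X)\to\hat X$ and the fibrant replacement $X\to\hat X$ are strict weak equivalences. Apart from a harmless notational slip ($\pi_i SR\Phi(\S,\hat X)$ should read $\pi_i S\Phi(\S,\hat X)$), the argument matches the paper's proof.
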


\begin{proof}
Let $A\in \vsGssetn$, and consider $SA\in \cospectra$ (by \cite[Lemma 4.2]{BousfieldFriedlander}). We have 
    \begin{align*}
        \pi_i SA  =\pi_i A  \cong \pi_i A\langle 1\rangle,
    \end{align*}
where the equality is the definition of the homotopy groups of a $\Gamma$-space, and the isomorphism is due to \cref{htpy_level1}. Since $A$ was assumed to take values in ${\ssetp}_{[0,n]}$, we know that the homotopy groups of $A\langle 1\rangle$ are concentrated in $[0,n]$; thus, the same is true for the homotopy groups of the spectrum $SA$.

For the other direction, let $X\in \ospectran$, and consider $R\Phi(\S,X)=\Phi(\S,\hat{X})\in \vsGsset$ for a fibrant replacement $\hat{X}$ of $X$. Because of \cref{htpy_special}, we know that for each $m$, the pointed space $\Phi(\S,\hat{X})\langle m\rangle$ will have homotopy groups concentrated in $[0,n]$ precisely when $\Phi(\S,\hat{X})\langle 1\rangle$ does. In addition,
    \begin{align*}
        \pi_i\Phi(\S,\hat{X})\langle 1\rangle \cong \pi_i \Phi(\S,\hat{X}) =\pi_i S\Phi(\S,\hat{X}) \cong \pi_i \hat{X}\cong \pi_i X,
    \end{align*}
where the first isomorphism is due to \cref{htpy_level1}, and the equality is by definition of the homotopy groups of the $\Gamma$-space $\Phi(\S,\hat{X})$. The last two isomorphisms are induced by strict weak equivalences $$S\Phi(\S,\hat{X})\xrightarrow{\sim}\hat{X} \xleftarrow{\sim} X$$ given by the counit of the derived adjunction and by the fibrant replacement (recall that strict weak equivalences induce isomorphisms on homotopy groups).
\end{proof}

\begin{cor}\label{half_of_SHH_from_BF}
There exists an equivalence of homotopy categories
\[\begin{tikzcd}
U\circ LS\colon \Ho(\vsGssetn)^\text{strict}\rar[shift left=0.8ex] & \Ho(\spectran)^\text{stable}\colon R\Phi(\S,-)\circ Q.\lar[shift left=0.8ex]
\end{tikzcd}\]
\end{cor}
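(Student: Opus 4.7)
The plan is to deduce the corollary by composing the two equivalences of homotopy categories that are already in hand, namely the restricted Quillen equivalence from \cref{equiv_gamma_and_spectra}
\[
LS\colon \Ho(\vsGssetn)^{\text{strict}} \rightleftarrows \Ho(\ospectran)^{\text{strict}} \colon R\Phi(\S,-)
\]
and the equivalence from \cref{Ho_spectra_equiv_Ho_omega_spectra}
\[
Q\colon \Ho(\spectra)^{\text{stable}} \rightleftarrows \Ho(\Omega\spectra)^{\text{strict}} \colon U.
\]
The only genuine content to check is that the second equivalence restricts to the full subcategories of $n$-types on both sides, yielding an equivalence
\[
Q\colon \Ho(\spectran)^{\text{stable}} \rightleftarrows \Ho(\ospectran)^{\text{strict}} \colon U.
\]
Composing this restricted equivalence with the one from \cref{equiv_gamma_and_spectra} produces the adjoint pair $U\circ LS \dashv R\Phi(\S,-)\circ Q$ as claimed.

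To verify the restriction, I would argue as follows. For $Q$: given $X \in \spectran$, the natural map $\eta_X\colon X \to QX$ is a stable weak equivalence, hence induces isomorphisms $\pi_i X \cong \pi_i QX$ for every $i \in \mathbb{Z}$. Since the homotopy groups of $X$ are concentrated in $[0,n]$ by assumption, the same holds for $QX$; combined with the already-observed fact that $QX$ is an $\Omega$-spectrum, this gives $QX \in \ospectran$. For $U$: the forgetful functor does not alter the underlying spectrum, and in particular preserves homotopy groups, so it sends $\ospectran$ into $\spectran$. Consequently, both functors in \cref{Ho_spectra_equiv_Ho_omega_spectra} restrict to the subcategories of objects whose homotopy groups are concentrated in $[0,n]$. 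The unit $\eta$ and the counit $\epsilon = \eta^{-1}$ from the proof of \cref{Ho_spectra_equiv_Ho_omega_spectra} likewise restrict, so the restricted pair is still an adjoint equivalence.

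Finally, composing the two equivalences gives an equivalence
\[
\Ho(\vsGssetn)^{\text{strict}} \xrightarrow{LS} \Ho(\ospectran)^{\text{strict}} \xrightarrow{U} \Ho(\spectran)^{\text{stable}},
\]
with quasi-inverse $R\Phi(\S,-)\circ Q$; this is precisely the stated adjoint equivalence. I do not expect any substantive obstacle here, as the work has already been done in assembling \cref{equiv_gamma_and_spectra} and \cref{Ho_spectra_equiv_Ho_omega_spectra}; the only point requiring care is the bookkeeping that the homotopy-group restriction is preserved by $Q$, which follows immediately from $\eta_X$ being a stable weak equivalence.
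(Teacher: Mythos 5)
Your argument is correct and is essentially the paper's own proof: the corollary is obtained by composing \cref{equiv_gamma_and_spectra} with \cref{Ho_spectra_equiv_Ho_omega_spectra}, the only point to check being that $Q$ restricts to $\spectran\to\ospectran$, which follows exactly as you say from $\eta_X$ being a stable weak equivalence (and $U$ restricting trivially).
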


\begin{proof}
This is an immediate consequence of \cref{equiv_gamma_and_spectra} and \cref{Ho_spectra_equiv_Ho_omega_spectra}, once we recall that $\eta\colon 1\Rightarrow Q$ is componentwise a stable weak equivalence and thus $Q\colon\spectra\to\Omega\spectra$ restricts to $Q\colon \spectran\to\ospectran$.
\end{proof}

\section{Stable homotopy hypothesis}
\label{Sec:SHH}

The aim of this section is to put together all the ingredients prepared so far. We first need to define the objects which will play the role of fully groupoidal weak symmetric monoidal $n$-categories. The rest of the section will be devoted to proving the stable homotopy hypothesis.

\subsection{Picard--Tamsamani $n$-categories}

We now define the main player in our version of the stable homotopy hypothesis. As mentioned in the introduction, the stable homotopy hypothesis should relate stable homotopy $n$-types with fully groupoidal symmetric monoidal weak $n$-categories. Since both the definition of a symmetric monoidal bicategory and of a tricategory including all coherences is quite challenging, an explicit fully algebraic definition of symmetric monoidal weak $n$-categories for all $n$ seems out of reach at the moment. However, once we decide to use Segal-type models, the definition ends up being fairly simple.

Before we can phrase the definition, recall that there is a functor relating the categories $\Dop$ and $\Gamma$. More precisely, there is a functor $\phi\colon \Dop\to \Gamma$ given by $\phi([n])=\ord{n} $, for all $n\geq 0$, and which sends an order-preserving map $\alpha\colon [m]\to [n]$ to the pointed map 
\[ \phi(\alpha)\colon \langle n\rangle \to \langle m\rangle, \;\;
j\mapsto \begin{cases} 
i & \text{if } j\in [\alpha(i-1)+1, \alpha(i)], \\
0 & \text{else}.
\end{cases} \] 

\begin{ex}
Recall the maps $\nu^j\colon [1]\to [n]$ from \cref{def-seg-map} of the Segal maps. We find that $\phi(\nu^j)=\nu_j\colon \ord{n}\to \ord{1}$, justifying the notation. In particular, the underlying simplicial object of a $\Gamma$-object has its Segal maps of \cref{def-seg-map} being weak equivalences if and only if it is special in the sense of \cref{DefSpecial}.

Moreover, we have $\phi(d^1)=m$ for $d^1\colon [1] \to [2]$ and the map $m$ from \cref{RmkMonoid}. Thus, the monoid structure on $\pi_0A\ord{1}$ is determined by the underlying simplicial object of a $\Gamma$-space $A$. 
\end{ex}

The functor $\phi$ makes it possible to define the following:

\begin{defn}\label{DefUnderlyingDelta}
Let $A\colon \Gamma \to \C$ be a $\Gamma$-object in a pointed category $\C$. Then we call the composite functor $\Dop \xrightarrow{\phi} \Gamma \xrightarrow{A} \C$ the \emph{underlying simplicial object} of $A$.
\end{defn}

Now we are ready to define our fully groupoidal, symmetric monoidal weak $n$-categories.

\begin{defn}\label{Definition PicTam}
A \emph{Picard--Tamsamani $n$-category} is a functor $\XX\colon \Gamma \times \Dsop{n} \to \set$ such that the restriction to $\Dsop{n+1}$ is a Tamsamani $(n+1)$-groupoid with one object.
\end{defn}

We denote the category of Picard--Tamsamani $n$-categories and all natural transformations between them 
by $\PicGTam^n$. 

\begin{rmk}
The reader should be aware that some authors (e.g.\ \cite{JohnsonOsorno}) use the notion of a Picard \emph{groupoid} for what we would call Picard category, to emphasize the invertibility of the morphisms. 
\end{rmk}

Next, we define the appropriate notion of equivalence between Picard--Tamsamani $n$-categories.

\begin{defn}\label{equivPic} A morphism of Picard--Tamsamani $n$-categories $f\colon \XX\to \YY$ is a \emph{weak equivalence} if the induced map between the restrictions of $\XX$ and $\YY$ to $\Dnop$ is an $(n+1)$-equivalence of Tamsamani $(n+1)$-groupoids.
\end{defn}

Recall that there is a notion of $\pi_0$ for Tamsamani $n$-groupoids. Its properties, discussed in \cref{PropertiesTamsamani}, allow us to mimic the discussion for spaces and define very special $\Gamma$-objects in Tamsamani $n$-groupoids as follows.

\begin{rmk}
For a special $\Gamma$-object $A$ in Tamsamani $n$-groupoids, the set $\pi_0 A\langle 1 \rangle$ becomes an abelian monoid, with multiplication 
$$\pi_0 A\langle 1 \rangle\times \pi_0 A\langle 1 \rangle\xleftarrow{(\nu_1, \nu_2)}\pi_0 A\langle 2 \rangle\xrightarrow{m}\pi_0 A\langle 1 \rangle$$ 
where $m\colon\langle 2\rangle\to\langle 1\rangle$ is the map that sends $1,2$ to $1$, and the left-hand map is invertible by definition of a special $\Gamma$-object. 
\end{rmk}

\begin{defn}
A $\Gamma$-object in Tamsamani $n$-groupoids $A$ is \emph{very special} if it is special, and the abelian monoid $\pi_0 A\langle 1\rangle$ is a group.
\end{defn}

\begin{lem}\label{PicTam_are_vsGammaGTam}
A functor $\XX\colon \Gamma \times (\Delta^{\op})^n \to \set$ is a Picard--Tamsamani $n$-category if and only if it lifts to a very special $\Gamma$-object in Tamsamani $n$-groupoids. 
\end{lem}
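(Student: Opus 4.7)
The plan is to restrict $\XX$ via the functor $\phi\times \id_{\Dsop{n}}$ and read off both sides of the asserted equivalence from the definitions, using the identities $\phi(\nu^j)=\nu_j$ and $\phi(d^1)=m$ recalled in the example preceding \cref{DefUnderlyingDelta}. Concretely, I would set $\Y := \XX\circ(\phi\times \id_{\Dsop{n}})\colon \Dsop{n+1}\to \set$, so that $\Y_s = \XX(\ord{s},-)\in\funcat{n}{\set}$, and show that $\Y$ is a Tamsamani $(n+1)$-groupoid with one object if and only if $\XX$ lifts to a very special $\Gamma$-object in $\GTam^n$.

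First I would handle the underlying Tamsamani-category data. The one-object condition $\Y_0 = *$ (a discrete singleton) is the same as $\XX(\ord{0},-) = *$, i.e.,\ the $\Gamma$-object condition, since $\ord{0}$ is the zero object of $\Gamma$. The condition $\Y_s\in\Tam^n$ for $s\geq 1$ is exactly $\XX(\ord{s},-)\in\Tam^n$. Because $\Y_0 = *$, the iterated pullback $\pro{\Y_1}{\Y_0}{s}$ collapses to $\Y_1^{\times s}$, and $\phi(\nu^j)=\nu_j$ identifies the Segal map from \cref{def-seg-map} with the specialness comparison $\XX(\ord{s},-)\to\XX(\ord{1},-)^{\times s}$ of \cref{DefSpecial}. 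Imposing condition (i) of \cref{Definition GTa} for $\Y$ additionally reads $\XX(\ord{k},-)\in\GTam^n$ for all $k$, so altogether these pieces match the statement that $\XX$ lifts to a special $\Gamma$-object in $\GTam^n$.

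The main obstacle is matching the remaining groupoid datum with grouplikeness of $\pi_0\XX(\ord{1},-)$. For $n = 0$, the assertion $\Y\in\GTam^1=\gpd$ is the statement that the one-object category $\Y$ is a groupoid, i.e.,\ that $\Y_1 = \XX(\ord{1},-)$ is a group. For $n \geq 1$, the condition to match is (ii) of \cref{Definition GTa}, namely $\p{n}\Y\in\GTam^n$, and I would argue by induction. The functor $\p{n}$ preserves pullbacks over discrete objects and sends $n$-equivalences to $(n-1)$-equivalences, so $\p{n}\Y$ is again a one-object Tamsamani $n$-category with levels $\p{n-1}\XX(\ord{s},-)\in\GTam^{n-1}$ and special Segal maps. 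In the base case $n = 1$, $\p{1}\Y\in\gpd$ amounts to the one-object category $\p{1}\Y$ being a groupoid, and its morphism monoid $p\XX(\ord{1},-) = \pi_0\XX(\ord{1},-)$ (with multiplication induced by $\phi(d^1)=m$) is a group precisely when this holds. For the inductive step, applying the inductive hypothesis to $\p{n}\Y$ shows $\p{n-1}(\p{n}\Y)=\p{n-1}\Y\in\GTam^{n-1}$ iff $\pi_0\p{n-1}\XX(\ord{1},-)$ is a group, which by the factorization of \cref{lem-multsimpsets-2R} coincides with $\pi_0\XX(\ord{1},-)$. This closes the induction and hence the equivalence claimed by the lemma.
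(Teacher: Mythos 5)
Your proposal is correct and follows essentially the same route as the paper: both match the one-object and Segal conditions with specialness via $\phi(\nu^j)=\nu_j$, and reduce the remaining groupoid condition through the truncations $\p{n},\p{n-1},\dots$ (using \cref{lem-multsimpsets-2R}) to the statement that the one-object category $\p{1}\Y$ with morphism monoid $\pi_0\XX\ord{1}$ is a groupoid. The only difference is presentational: you package the paper's ``repeating this argument'' reduction as an explicit induction on $n$, which is the same argument.
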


\begin{proof}
Assume we are given a functor $\XX\colon \Gamma \times \Dsop{n} \to \set$. We can also consider $\XX$ as a functor $\Gamma \to [\Dsop{n}, \set]$. Recall that we write $\XX_n$ for $\XX\langle n\rangle$ when we consider $\XX$ as its underlying $\Dsop{}$-object.

Assume first that $\XX$ is a Picard--Tamsamani $n$-category. As such, it takes by assumption values in Tamsamani $n$-groupoids by \cref{Definition GTa,,Definition PicTam}. Moreover, note that the condition of being special only concerns the underlying $\Delta^{\op}$-object and is satisfied by the definition since $\XX$ is a Tamsamani $(n+1)$-groupoid with one object.

Next, we observe that the monoid structure of $\pi_0\XX_1$ again only depends on the underlying $\Delta^{\op}$-object, and thus so does the question of existence of inverses. So we are left to show that $\pi_0\XX_1$ is a group for a Tamsamani $(n+1)$-groupoid $\XX$. Recall from \cref{lem-multsimpsets-2R} that $(p^{(1)}\XX)_1=p^{(0)}\XX_1=\pi_0\XX_1$. Moreover, observe that we can conclude inductively from \cref{Definition GTa} that $p^{(1)}\XX$ is the nerve of a groupoid. Applying \cref{lem-multsimpsets-2R}, we see that $(p^{(1)}\XX)_0=p^{(0)}\XX_0=*$ since a Picard--Tamsamani $n$-category has only one object by definition. Thus, the groupoid $p^{(1)}\XX$ is actually the group $\pi_0\XX_1$, so that $\XX$ is indeed a very special $\Gamma$-object in Tamsamani $n$-groupoids.

Conversely, assume now $\XX$ to be a very special $\Gamma$-object in Tamsamani $n$-groupoids. In particular, it has $\XX_0=*$ and is discrete. Moreover, since the Segal condition comes precisely from being special, we can conclude by \cref{DefTamsamani} that the underlying $\Dsop{n+1}$-object of $\XX$ is indeed a Tamsamani $(n+1)$-category with one object. We still have to check that it is a Tamsamani $(n+1)$-groupoid. 

Since $\XX_k$ is a Tamsamani $n$-groupoid for all $k\geq 0$ by the assumption of the lemma, we only need to check that $p^{(n)}\XX$ is a Tamsamani $n$-groupoid by \cref{Definition GTa}. By \cref{lem-multsimpsets-2R}, we have $(p^{(n)}\XX)_k=p^{(n-1)}\XX_k$ for all $k\geq 0$, and this is a Tamsamani $(n-1)$-groupoid since $\XX_k$ is a Tamsamani $n$-groupoid. Repeating this argument, we are left to show that $p^{(1)}\XX$ is a groupoid. 

Now we can reverse the argument above. We already know that $p^{(1)}\XX$ is a category with one object, and moreover its set of morphisms is precisely $(p^{(1)}\XX)_1=p^{(0)}\XX_1=\pi_0\XX_1$. Since this monoid was assumed to be a group, we have indeed shown that $p^{(1)}\XX$ is a groupoid. This completes the proof. 
\end{proof}

\begin{rmk}\label{we_PicTam_vsGammaGTam}
By \cref{equiv_Tam_cats_woo} and \cref{PicTam_are_vsGammaGTam}, a morphism $f\colon \XX\to \YY$ is a weak equivalence of Picard--Tamsamani $n$-categories if and only if each component $f_m\colon \XX\langle m\rangle\to \YY\langle m\rangle$ is an $n$-equivalence in $\GTam^n$.
\end{rmk}

\subsection{Promoting Tamsamani's homotopy hypothesis to $\Gamma$-objects}
Since we opted to work with homotopy types seen in the category of simplicial sets instead of topological spaces, it will be necessary to consider the slightly modified functors shown below. 
\[
	\begin{tikzcd}
		\GTam^n\ar[r, hook]\ar[rr, dashed, bend left=20, "B"]\ar[rrr, bend left=30, "|-|"]		&\left[(\Dop)^n,\set\right]\ar[r, "\diag"]		&\left[\Dop,\set\right]\ar[r, "|-|"]\ar[ll, dashed, bend left=20, "\Pi_n\circ|-|=P_n"]		&\topo\ar[lll, bend left=35, "\Pi_n"]
	\end{tikzcd}
\]

In what follows, we show that the functors 
\[\begin{tikzcd}[column sep=large]
\GammaGTam	\ar[r, shift left=1.1ex, "B_*"] &	\GammasSet\ar[l, shift left=1.1ex, "(P_n)_*"] 
\end{tikzcd}\] 
restrict to very special $\Gamma$-objects, and that they induce an equivalence of the homotopy categories if the weak equivalences are defined levelwise.

We begin by studying the compatibility of the $0$-th homotopy groups with the two functors defined above.

\begin{lem}\label{pi0_GTam_compatibility}Let $X$ be a topological space, and let $\XX$ be a Tamsamani $n$-groupoid. Then the following holds:
    \begin{enumerate}
        \item\label{part_1} There are natural isomorphisms $\pi_0(X)\cong \pi_0(\Pi_1(X))$ and $p^{(i)}(\Pi_n(X))\cong\Pi_i(X)$ for all $1\leq i\leq n-1$. In consequence, $\pi_0(X)\cong\pi_0(\Pi_n(X))$.
        \item\label{part_2} There is a natural isomorphism $\pi_0(\XX)\cong\pi_0(|\XX|)$.
    \end{enumerate}
\end{lem}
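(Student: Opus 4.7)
The plan for part (1) proceeds by first handling the case $n=1$ and then reducing the general identification $p^{(i)}\Pi_n(X) \cong \Pi_i(X)$ to a single-step claim. For $\pi_0(X) \cong \pi_0 \Pi_1(X)$, recall from \cref{ex_Poincare_gpd} that $\Pi_1(X)$ is the nerve of the fundamental groupoid of $X$: its objects are the points of $X$, and two of them are connected by a morphism precisely when they lie in the same path component of $X$. Since $\pi_0 = p$ of a Tamsamani $1$-groupoid (i.e.\ an ordinary groupoid) is by definition the set of isomorphism classes of its objects, the natural bijection follows immediately.

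For the isomorphism $p^{(i)}\Pi_n(X) \cong \Pi_i(X)$ with $1 \le i \le n-1$, I would induct on $n - i$, using the factorization of \cref{lem-multsimpsets-2R}(b) that writes $p^{(i)}$ as an iterated composite of single-step reductions $p^{(j)}\colon \funcat{j+1}{\set} \to \funcat{j}{\set}$. It therefore suffices to establish $p^{(n-1)}\Pi_n(X) \cong \Pi_{n-1}(X)$, naturally in $X$. By \cref{lem-multsimpsets-2R}(a), evaluating $p^{(n-1)}\Pi_n(X)$ at an index $[s]$ amounts to applying $p$ to the last simplicial direction of $\Pi_n(X)_s$; unravelling the definitions of $\multising{n}{X}$ and of the homotopy quotient defining $\Pi_n$, this $p$ identifies two multi-simplices whenever they are connected by an $\multising{n+1}{X}$-cell in that final direction, which is exactly the homotopy relation used to produce $\Pi_{n-1}(X)$. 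The consequence $\pi_0(X) \cong \pi_0 \Pi_n(X)$ then follows by setting $i = 1$ and composing with the first isomorphism: $\pi_0 \Pi_n X = p^{(0)} \Pi_n X \cong p^{(0)} \Pi_1 X = \pi_0 \Pi_1 X \cong \pi_0 X$.

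For part (2), the plan is to reduce to this consequence via Tamsamani's unstable homotopy hypothesis. By \cref{UHH}, the functors $|-|$ and $\Pi_n$ induce an equivalence of homotopy categories, so $\XX$ and $\Pi_n|\XX|$ become naturally isomorphic in $\Ho(\GTam^n)$. Since $n$-equivalences induce bijections on $\pi_0$ by \cref{n_equiv_pi0}, the functor $\pi_0$ factors through $\Ho(\GTam^n)$, giving a natural bijection $\pi_0 \XX \cong \pi_0 \Pi_n|\XX|$. Combining this with part (1) applied to the space $|\XX|$ yields the desired natural isomorphism $\pi_0(\XX) \cong \pi_0(\Pi_n|\XX|) \cong \pi_0(|\XX|)$.

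The main obstacle is the single-step reduction $p^{(n-1)}\Pi_n(X) \cong \Pi_{n-1}(X)$ in part (1): carefully matching the quotient by $p$ on the final simplicial direction with Tamsamani's homotopy relation on $\multising{n}{X}$, while keeping all the multi-indexing conventions straight, is the most technically delicate step. Once this is in place, the remainder of the argument is essentially bookkeeping combined with the appeal to \cref{UHH}.
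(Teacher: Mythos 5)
Your overall architecture coincides with the paper's: part (1) combines the fundamental-groupoid description of \cref{ex_Poincare_gpd} with the truncation compatibility $p^{(i)}\Pi_n(X)\cong\Pi_i(X)$ and the factorization of \cref{lem-multsimpsets-2R}, and part (2) deduces $\pi_0(\XX)\cong\pi_0(|\XX|)$ from a natural comparison $\XX\to\Pi_n(|\XX|)$ together with \cref{n_equiv_pi0} and part (1). For part (2) the paper invokes Tamsamani's explicit natural $n$-equivalence $\XX\to\Pi_n(|\XX|)$ from \S 11 of his paper rather than the induced equivalence of homotopy categories of \cref{UHH}, but since $\pi_0$ descends to $\Ho(\GTam^n)$ by \cref{n_equiv_pi0}, your variant gives the same natural bijection and is fine.

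The genuine weak point is the truncation compatibility itself. The paper does not prove $p^{(i)}\Pi_n(X)\cong\Pi_i(X)$: it cites Tamsamani's result (Thm.\ 2.3.5 of his preprint, where $p^{(n-1)}$ is denoted $T$), whereas you propose to verify the single step $p^{(n-1)}\Pi_n(X)\cong\Pi_{n-1}(X)$ by unravelling definitions, and your sketch of that step is both under-detailed and mis-stated. First, by \cref{def-multsimpsets-1R-A} the functor $p$ takes isomorphism classes of $\cate$ of the last simplicial direction, not the quotient of the $0$-level by the relation generated by $1$-cells; these agree here only because $\Pi_n(X)_{k_1,\dots,k_{n-1},\bullet}$ is the nerve of a groupoid (the last index is the nerve direction of $\GTam^n\hookrightarrow\funcat{n-1}{\gpd}$), and this has to be said. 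Second, the cells are misidentified: the relation that $p$ imposes on $\Pi_n(X)_{k_1,\dots,k_{n-1},0}$ is ``connected by a $1$-cell of the $n$-th direction'', i.e.\ by (the class of) an element of $\multising{n}{X}_{k_1,\dots,k_{n-1},1}$, and it is these $\multising{n}{X}$-cells that are precisely the homotopies defining $\Pi_{n-1}(X)$; the $\multising{n+1}{X}$-cells you invoke instead govern the homotopy relation internal to $\Pi_n$, which on the level $(k_1,\dots,k_{n-1},0)$ is the identity relation, since the constancy conditions force such a cell to be independent of the appended variable, giving $\Pi_n(X)_{k_1,\dots,k_{n-1},0}\cong\multising{n-1}{X}_{k_1,\dots,k_{n-1}}$. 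Carrying out these identifications, checking compatibility with the simplicial structure maps, and checking naturality in $X$ is exactly the content of Tamsamani's theorem; as written, your one-sentence unravelling does not establish it, so either supply this verification in full or simply cite Tamsamani as the paper does.
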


\begin{proof} 
The isomorphism $\pi_0(X)\cong \pi_0(\Pi_1(X))$ follows from \cref{ex_Poincare_gpd}, and the compatibility between the truncation functor on Tamsamani groupoids and Poincar\'e $n$-groupoids is proven in \cite[Thm.\ 2.3.5]{TamsamaniPreprintSpaces}, where the truncation functor $p^{(n-1)}$ is denoted by $T$. 

Therefore, we have the following chain of isomorphisms
\begin{align*}
\pi_0(X)\cong	&p(\Pi_1(X))\\
        \cong   &p^{(0)}p^{(1)}(\Pi_n(X))\\
        \cong   &p(\Pi_n(X))\\
        =       &\pi_0(\Pi_n(X))
\end{align*}
which concludes the proof of (\cref{part_1}).

For the proof of (\ref{part_2}), we recall that, in \cite[\textsection 11]{Tamsamani}, Tamsamani defines a natural $n$-equivalence $\XX\to\Pi_n(|\XX|)$, which by \cref{n_equiv_pi0} induces an isomorphism of $0$-th homotopy groups. To complete the proof, we compose this 
isomorphism with the one obtained in (\ref{part_1}) between $\pi_0(\Pi_n(|\XX|))$ and $\pi_0(|\XX|)$.
\end{proof}

\begin{rmk} 
\cref{pi0_GTam_compatibility} is an instance of a more general principle: The functors $p^{(r)}$ can be seen as $r$-th Postnikov truncations, as explained in more detail in \cite{BlancPaoli2014}.
\end{rmk}

\begin{lem}\label{Gamma_GTam_to_Gamma_sset} If $\XX\colon\Gamma\to\GTam^n$ is a very special $\Gamma$-object, then so is $B\XX\colon\Gamma\to \sset$. 
\end{lem}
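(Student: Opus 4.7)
The plan is to check the two defining properties of a very special $\Gamma$-space for $B\XX$: the Segal maps are weak equivalences, and $\pi_0 B\XX\langle 1\rangle$ is a group. The first relies on Tamsamani's homotopy hypothesis and the fact that $|-|\colon\sset\to\topo$ reflects weak equivalences; the second follows from the compatibility of $\pi_0$ with $B$ proved in \cref{pi0_GTam_compatibility}. Before starting, I would observe that $B$ preserves finite products, since the inclusion $\GTam^n\hookrightarrow [(\Dop)^n,\set]$ does (being a full subcategory closed under finite products by \cref{I1}) and the diagonal functor $\diag$ preserves products componentwise. In particular, $B(\XX\langle 1\rangle^{\times n})\cong (B\XX\langle 1\rangle)^{\times n}$, and the image under $B$ of the indicator map of $\XX$ is the indicator map of $B\XX$.

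For the Segal condition, I would start from the fact that $\XX$ being special means the indicator map $\XX\langle n\rangle\to \XX\langle 1\rangle^{\times n}$ is an $n$-equivalence in $\GTam^n$. Applying the geometric realization functor $|-|\colon\GTam^n\to\topo$ from \cref{DefGeomRealTamsamani}, \cref{UHH} ensures we obtain a weak equivalence of topological spaces
\[
|B\XX\langle n\rangle|=|\XX\langle n\rangle|\xrightarrow{\sim}|\XX\langle 1\rangle^{\times n}|\cong|\XX\langle 1\rangle|^{\times n}=|B\XX\langle 1\rangle|^{\times n},
\]
where the preservation of products by $|-|\colon\sset\to\topo$ (in compactly generated spaces) is used together with the product preservation of $B$. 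Since $|-|\colon\sset\to\topo$ reflects weak equivalences, this forces the indicator map $B\XX\langle n\rangle\to B\XX\langle 1\rangle^{\times n}$ to be a weak equivalence of simplicial sets, proving that $B\XX$ is special.

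For the group condition, \cref{pi0_GTam_compatibility}(ii) provides a natural isomorphism $\pi_0\XX\langle m\rangle\cong\pi_0|\XX\langle m\rangle|$, and since $|\XX\langle m\rangle|=|B\XX\langle m\rangle|$ and $\pi_0$ of a simplicial set agrees with $\pi_0$ of its realization, we obtain a natural isomorphism $\pi_0\XX\langle m\rangle\cong\pi_0 B\XX\langle m\rangle$ for every $m$. The monoid structures on $\pi_0\XX\langle 1\rangle$ and $\pi_0 B\XX\langle 1\rangle$ are both built from the $\Gamma$-maps $(\nu_1,\nu_2)\colon\langle 2\rangle\to\langle 1\rangle^{\times 2}$ (whose image on $\pi_0$ is invertible by specialness) and $m\colon\langle 2\rangle\to\langle 1\rangle$; by naturality in $\Gamma$, the above isomorphism is a morphism of monoids. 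Since $\pi_0\XX\langle 1\rangle$ is a group by the assumption that $\XX$ is very special, so is $\pi_0 B\XX\langle 1\rangle$.

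The main point to pin down carefully is the compatibility of the two monoid structures. The potential obstacle is that \cref{pi0_GTam_compatibility} is phrased object by object, so I would emphasize that the natural isomorphism produced there is natural not only in morphisms of $\GTam^n$ but also indexed by $\langle m\rangle\in\Gamma$ via the functor $\XX$; this makes it automatically respect the $\Gamma$-diagrams defining the monoid structure. Once this is spelled out, the lemma follows.
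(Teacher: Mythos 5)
Your proof is correct and takes essentially the same route as the paper's: you realize the Segal map, use that Tamsamani's realization sends $n$-equivalences to weak homotopy equivalences while $|-|\colon\sset\to\topo$ preserves products and reflects weak equivalences (plus product preservation of $B$), and then transfer the group structure on $\pi_0$ via the natural isomorphism of \cref{pi0_GTam_compatibility}. The only cosmetic omission is the one-line observation that $B\XX\langle 0\rangle=\ast$ (since $\XX\langle 0\rangle$ is discrete at the point), which the paper records so that $B\XX$ is genuinely a $\Gamma$-object.
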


\begin{proof}
We know that $B\XX\langle 0\rangle=\ast$ because $\XX\langle 0\rangle$ is discrete at $\ast\in \set$. Since $\XX$ is very special, the map $\XX\langle n\rangle\to \XX\langle 1\rangle^n$ is an $n$-equivalence. The geometric realization $B$ sends $n$-equivalences to weak homotopy equivalences; this follows from the fact that Tamsamani's geometric realization does so (see \cite[Prop.\ 11.2]{Tamsamani}) and that the classical geometric realization $|-|\colon\sset\to\topo$ preserves and reflects weak equivalences. The functor $B$ also preserves products as $\diag$ is a right adjoint and the classical geometric realization preserves products. Hence $B\XX$ satisfies the Segal condition too. 

Finally, by \cref{pi0_GTam_compatibility}, we know that there is a natural isomorphism $\pi_0(B\XX\langle 1\rangle)\cong\pi_0(\XX\langle 1\rangle)$, which is compatible with the monoid structure coming from the Segal condition and thus allows us to deduce that $\pi_0(B\XX\langle 1\rangle)$ is an abelian group since $\pi_0(\XX\langle 1\rangle)$ is one.
\end{proof}

\begin{lem}\label{Pi_preserves_prod} The functor $\Pi_n\colon\topo\to \GTam^n$ preserves finite products up to $n$-equivalence.
\end{lem}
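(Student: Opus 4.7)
My plan is to exhibit a natural isomorphism $\Pi_n(X \times Y) \cong \Pi_n(X) \times \Pi_n(Y)$ directly, from which the claimed $n$-equivalence is immediate. The preservation of the terminal object amounts to $\Pi_n(\ast) \cong \ast$, which is clear since $\multising{n}{\ast}$ has a unique element in each multi-degree.

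For binary products, I would first observe that $\Hp^n(-) = \Hom_{\topo}(\R_n(-), -)$ strictly preserves products, since by the universal property one has $\Hom_{\topo}(-, X \times Y) \cong \Hom_{\topo}(-, X) \times \Hom_{\topo}(-, Y)$. Writing a map $f \colon \R_n(M) \to X \times Y$ as a pair $(f_X, f_Y)$, I would argue by induction on $n$ that this decomposition restricts to an isomorphism
\[
\multising{n}{X \times Y} \cong \multising{n}{X} \times \multising{n}{Y}.
\]
The base case $n = 1$ reduces to $\Hp^1(X \times Y) \cong \Hp^1(X) \times \Hp^1(Y)$. In the inductive step, the defining condition on $f$ demands the existence of lower-dimensional restrictions $f_i \in \multising{n-k-1}{X \times Y}$ independent of certain variables; using the inductive hypothesis applied to $\multising{n-k-1}{-}$, this condition decomposes componentwise into the corresponding conditions for $f_X$ and $f_Y$ separately, since the values $f(x_n, \dots, v_i, \dots, x_1)$ decompose as pairs.

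Finally, I would check that the homotopy equivalence relation defining $\Pi_n$ from $\multising{n}{-}$ is compatible with the product decomposition. Two elements $f, g \in \multising{n}{X \times Y}_M$ are homotopic precisely when there exists a witnessing $\gamma \in \multising{n+1}{X \times Y}_{M,1}$ with $d_0 \gamma = f$ and $d_1 \gamma = g$; by the isomorphism already established at level $n+1$, such a $\gamma$ corresponds to a pair $(\gamma_X, \gamma_Y)$ of component-wise witnessing homotopies, and conversely one can pair two witnessing homotopies into one for the product. Passing to quotients yields the natural isomorphism $\Pi_n(X \times Y) \cong \Pi_n(X) \times \Pi_n(Y)$, which is in particular an $n$-equivalence in $\GTam^n$. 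The main obstacle is the inductive bookkeeping for the vertex-evaluation condition on $\multising{n}{-}$; once this is set up, the compatibility of the homotopy relation with products is a formal consequence of applying the same product decomposition one dimension up.
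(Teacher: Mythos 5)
Your argument is correct, and it takes a genuinely different route from the paper's: in fact it proves a stronger statement. You unwind the construction: since $\Hp^n(-)=\Hom_{\topo}(\R_n(-),-)$ preserves products strictly, the vertex-evaluation conditions cutting out $\multising{n}{X\times Y}\subset \Hp^n(X\times Y)$ decompose componentwise (a map into a product is constant in a variable exactly when both components are, and by strong induction on the level of the multicomplex the membership conditions in $\multising{n-k-1}{-}$ split as well), and the homotopy relation on the product is the product relation, since a witness $\gamma\in\multising{n+1}{X\times Y}_{M,1}$ is precisely a pair of componentwise witnesses; passing to quotients, the canonical comparison map $\Pi_n(X\times Y)\to \Pi_n(X)\times\Pi_n(Y)$ is a natural isomorphism, hence in particular an $n$-equivalence. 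The paper never opens up $\multising{n}{-}$: it uses Tamsamani's natural zigzag of weak equivalences $|\Pi_n(Z)|\to Z$ to see that the comparison map becomes a weak homotopy equivalence after geometric realization, then applies $\Pi_n$ again and invokes that $\Pi_n$ sends weak homotopy equivalences to $n$-equivalences together with the natural $n$-equivalence $\XX\to\Pi_n(|\XX|)$, concluding by two-out-of-three in a commutative square. Your approach buys the sharper, strict statement at the cost of the combinatorial bookkeeping you acknowledge; two points worth making explicit are that your isomorphism coincides with the canonical map $(\Pi_n(\pr_1),\Pi_n(\pr_2))$ (it does, since your decomposition is postcomposition with the projections), and that the quotient step uses exactly the single-witness homotopy relation as defined in the paper (if one preferred the generated equivalence relation, one would pad chains with degenerate homotopies). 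The paper's argument yields only the weaker conclusion actually needed, but is shorter and insulated from the implementation details of $\Pi_n$, relying solely on Tamsamani's cited results.
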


\begin{proof}
We begin by observing that there exists a zigzag 
of homotopy weak equivalences $|\Pi_n(Z)|\to Z$ for every $Z\in \topo$, functorial in $Z$, see \cite[Rmk.\ 10.8]{Tamsamani}. In consequence, given $X,Y\in \topo$, we have 
     \[\begin{tikzcd}[column sep=small]
     \left|\Pi_n(X\times Y)\right|\ar[rr]\ar[dr, "\sim"']		&		&\left|\Pi_n(X)\right|\times \left|\Pi_n(Y)\right|\ar[dl, "\sim"]\\
 \ 		&X\times Y
     \end{tikzcd}\]
This means that the map $\Pi_n(X\times Y)\to\Pi_n(X)\times\Pi_n(Y)$ induces a weak homotopy equivalence after realization. 

Applying $\Pi_n$ again, we obtain an $n$-equivalence by \cite[Prop.\ 11.4]{Tamsamani}. Together with the natural $n$-equivalence $\XX\to\Pi_n(|\XX|)$ of \cite[\textsection 11]{Tamsamani}, we get the commutative square
\[
\begin{tikzcd}
\Pi_n(X\times Y)\arrow[d, "\sim"]\arrow[r] & \Pi_n(X)\times\Pi_n(Y)\arrow[d, "\sim"] \\
\Pi_n\left|\Pi_n(X\times Y)\right|\arrow[r, "\sim"] & \Pi_n\left|\Pi_n(X)\times\Pi_n(Y)\right|
\end{tikzcd}
\]
showing that the upper horizontal map is an $n$-equivalence, which concludes the proof.
\end{proof}

\begin{cor}\label{Gamma_sset_to_Gamma_GTam} If $A\colon\Gamma\to \sset$ is a very special $\Gamma$-object, then so is $P_n A\colon\Gamma\to \GTam^n$.
\end{cor}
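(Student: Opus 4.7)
The plan is to verify that $P_nA$ satisfies the three conditions of being a very special $\Gamma$-object in $\GTam^n$: that its value at $\langle 0\rangle$ is the zero object, that its Segal maps are $n$-equivalences, and that $\pi_0$ of its value at $\langle 1\rangle$ is a group. Since $P_n = \Pi_n\circ|-|$, the strategy is to transport each of these conditions for $A$ across $|-|$ (classical geometric realization) into $\topo$, and then across $\Pi_n$ into $\GTam^n$, using the results proved earlier in the subsection.

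For the value at $\langle 0\rangle$, I would use that $A\langle 0\rangle=*$, hence $|A\langle 0\rangle|=*$ and $\Pi_n(*)$ is the terminal (and zero) Tamsamani $n$-groupoid. For the Segal condition, I would begin from the weak equivalence of simplicial sets $A\langle m\rangle\to A\langle 1\rangle^{\times m}$ provided by specialness of $A$. Since the classical geometric realization preserves both finite products and weak equivalences, this yields a weak homotopy equivalence $|A\langle m\rangle|\to|A\langle 1\rangle|^{\times m}$ in $\topo$. Applying $\Pi_n$ and using \cref{UHH} to know that $\Pi_n$ sends weak homotopy equivalences to $n$-equivalences, we get an $n$-equivalence $P_nA\langle m\rangle\to\Pi_n(|A\langle 1\rangle|^{\times m})$. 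Composing with the $n$-equivalence $\Pi_n(|A\langle 1\rangle|^{\times m})\to(P_nA\langle 1\rangle)^{\times m}$ supplied by \cref{Pi_preserves_prod} and using two-out-of-three, the Segal map of $P_nA$ is an $n$-equivalence, so $P_nA$ is special.

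For the very special condition, I would invoke \cref{pi0_GTam_compatibility} to identify $\pi_0(P_nA\langle 1\rangle)=\pi_0(\Pi_n|A\langle 1\rangle|)\cong\pi_0(|A\langle 1\rangle|)\cong\pi_0(A\langle 1\rangle)$. Since $A$ is very special, the right-hand side is an abelian group, and thus so is $\pi_0(P_nA\langle 1\rangle)$. The only point requiring mild care is that these natural isomorphisms respect the abelian monoid structures coming from specialness; this is formal once one observes that both structures are induced by the same maps $\nu_j$ and $m$ in $\Gamma$ and that the isomorphisms are natural in the $\Gamma$-variable, so this should be the mildest obstacle rather than a genuine difficulty. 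Altogether the corollary is a direct combination of \cref{UHH,Pi_preserves_prod,pi0_GTam_compatibility}.
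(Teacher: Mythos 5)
Your proof is correct and takes essentially the same route as the paper: the paper's proof dispatches the Segal condition by citing \cref{Pi_preserves_prod} (your explicit argument via $|-|$ preserving products and weak equivalences, $\Pi_n$ sending weak homotopy equivalences to $n$-equivalences, and closure of $n$-equivalences under composition is exactly what that citation compresses), and then handles the group condition through the same chain of isomorphisms $\pi_0(A\langle 1\rangle)\cong\pi_0(|A\langle 1\rangle|)\cong\pi_0(P_nA\langle 1\rangle)$ from \cref{pi0_GTam_compatibility}, compatible with the Segal-induced monoid structure.
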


\begin{proof}
By \cref{Pi_preserves_prod}, it only remains to prove that, when $A\colon\Gamma\to \sset$ is very special, the abelian monoid $\pi_0(P_n A\langle 1\rangle)$ is a group. Observe that we have
    \begin{align*}
        \pi_0(A\langle 1\rangle)\cong 	&\pi_0(|A\langle 1\rangle|)\\
        \cong & \pi_0(\Pi_n(|A\langle 1\rangle|))\\
        \cong & \pi_0(P_n A\langle 1\rangle)
    \end{align*} 
where the second isomorphism is given by \cref{pi0_GTam_compatibility}, which is compatible with the monoid structure given by the Segal condition. Since $\pi_0(A\langle 1\rangle )$ is an abelian group, so is $\pi_0(P_n A\langle 1\rangle)$.
\end{proof}

\begin{rmk}\label{sset_ssetp}When we consider $\Gamma$-objects in $\sset$, the condition that $\langle 0\rangle$ in $\Gamma$ needs to be sent to the $*$ makes the remaining levels into pointed spaces. Moreover, given a $\Gamma$-object on $\ssetp$, the condition that $\langle 0\rangle$ in $\Gamma$ needs to be sent to the zero object of $\ssetp$ determines the pointedness in the remaining levels. Therefore, the categories $\Gamma\ssetp$ and $\Gamma\sset$ are equivalent. Note that the obvious equivalence preserves very special $\Gamma$-objects.
\end{rmk}

\begin{thm}\label{half_of_SHH_from_UHH} The functors $P_n$ and $B$ induce an equivalence of homotopy categories
        \[\begin{tikzcd}
    \Ho(\mathrm{v.s.} \Gamma\GTam^n)\ar[r, shift left=1.1ex] & \Ho(\vsGssetn)^\text{strict}\ar[l, shift left=1.1ex] 
    \end{tikzcd}\]
where the category $\Ho(\mathrm{v.s.} \Gamma\GTam^n)$ is the localization with respect to levelwise $n$-equi\-valences. 
\end{thm}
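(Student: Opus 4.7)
The plan is to promote Tamsamani's unstable homotopy hypothesis (\cref{UHH}) to the level of very special $\Gamma$-objects. The key observation is that both the functors $B$ and $P_n$ and the natural transformations implementing the unstable equivalence are functorial in the underlying object, so they apply componentwise to $\Gamma$-objects without extra work.

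First, I would verify that $B$ and $P_n$ descend to the relevant localized categories. \Cref{Gamma_GTam_to_Gamma_sset,Gamma_sset_to_Gamma_GTam} already establish that they preserve the very special condition and values in $n$-types. Moreover, the proof of \cref{Gamma_GTam_to_Gamma_sset} shows that $B$ sends $n$-equivalences in $\GTam^n$ to weak homotopy equivalences of simplicial sets, and $P_n = \Pi_n\circ|-|$ sends weak equivalences of simplicial sets to $n$-equivalences via \cite[Prop.\ 11.4]{Tamsamani} (together with $|-|$ preserving weak equivalences). Applied levelwise, this means both functors send levelwise weak equivalences to levelwise weak equivalences, so they descend to the respective homotopy categories.

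Next, I would construct natural isomorphisms between the two composites and the identity functors. For $P_nB$, one uses Tamsamani's natural $n$-equivalence $\XX\to\Pi_n|\XX| = P_nB\XX$ from \cite[\textsection 11]{Tamsamani} (already invoked in \cref{pi0_GTam_compatibility}). Since it is natural in $\XX\in\GTam^n$, applying it levelwise to a very special $\Gamma$-object yields a natural transformation $\id\Rightarrow P_nB$ whose components are $n$-equivalences, hence an isomorphism in $\Ho(\vsGammaGTam^n)$. For $BP_n$, there is no direct map: one must use a zigzag. Combining Tamsamani's natural zigzag of weak homotopy equivalences $|\Pi_n Z|\xleftarrow{\sim}(\cdot)\xrightarrow{\sim} Z$ from \cite[Rmk.\ 10.8]{Tamsamani} with the units $A\to\Sing|A|$ and $BP_nA\to\Sing|BP_nA|$ (which are weak equivalences of simplicial sets) produces a zigzag of natural levelwise weak equivalences connecting $BP_nA$ to $A$ in $\sset$.

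The main obstacle is checking that every intermediate object in this zigzag genuinely lives in $\vsGssetn$, so that the zigzag represents a bona fide isomorphism in the target homotopy category. This amounts to verifying that the levelwise application of $\Sing\circ|-|$ preserves the Segal-type maps being weak equivalences (which holds because $\Sing$ is a right adjoint and hence preserves products, and both $|-|$ and $\Sing$ preserve weak equivalences) and that $\pi_0$ at level one remains a group (which follows from the natural isomorphism $\pi_0\Sing|A\langle 1\rangle|\cong \pi_0 A\langle 1\rangle$ and the assumption that $A$ was very special). The $n$-type condition is automatic because $\Sing$ and $|-|$ preserve homotopy groups. Once these checks are in place, combining the two steps yields that $B$ and $P_n$ induce quasi-inverse equivalences of the homotopy categories.
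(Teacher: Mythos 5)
Your proposal is correct and follows essentially the same route as the paper: restrict $B_*$ and $(P_n)_*$ to very special $\Gamma$-objects via \cref{Gamma_GTam_to_Gamma_sset,Gamma_sset_to_Gamma_GTam}, and then promote Tamsamani's unstable equivalence (\cref{UHH}) levelwise, mediated by the $\Sing\dashv|-|$ comparison between simplicial sets and spaces. You merely make explicit what the paper's terse proof leaves implicit, namely that the equivalence is implemented by the natural $n$-equivalence $\XX\to\Pi_n|\XX|$ and the natural zigzag $|\Pi_n Z|\simeq Z$ (both already cited elsewhere in the paper), so that it applies componentwise to $\Gamma$-objects and the intermediate terms stay very special $\Gamma$-$n$-types.
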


\begin{proof}
Since the geometric realization $|-|\colon\sset\to\topo$ preserves and reflects weak equivalences, \cref{Gamma_GTam_to_Gamma_sset} and  \cref{Gamma_sset_to_Gamma_GTam} allow us to conclude that 
the functors 
\[\begin{tikzcd}[column sep=large]
\GammaGTam	\ar[r, shift left=1.1ex, "B_*"] &	\GammasSet\ar[l, shift left=1.1ex, "(P_n)_*"] 
\end{tikzcd}\] 
restrict to very special $\Gamma$-objects.   
Moreover, the induced functors on the homotopy categories are inverse equivalences, using
\cref{UHH} and \cref{sset_ssetp}.
\end{proof}

\subsection{Proof of the stable homotopy hypothesis}
The purpose of this subsection is to finally combine all the ingredients and to prove the stable homotopy hypothesis in the Tamsamani model. 

In the diagram below we present a sketch of the proof, which consists of using modifications of Tamsamani's unstable homotopy hypothesis (UHH) and of the result by Bousfield and Friedlander (BF) to relate Picard--Tamsamani categories with stable types through very special $\Gamma$-spaces. We denote by $\simeq_{\text{Ho}}$ the equivalence of homotopy categories.
\begin{tz} 
\def\r{5cm}
\node (B1) at (0,2) {$\vsGsset$};
\node (C1) at (\r,2) {$\Omega\spectra_{\geq 0}$};
\node (A2) at (-\r,0) {$\vsGammaGTam^n$};
\node (B2) at (0,0) {$\vsGssetn$};
\node (C2) at (\r,0) {$\spectran$};
\node (A3) at (-\r,-2) {$\GTam^n$};
\node (B3) at (0,-2) {$\ssetpn$};

\draw[->] (B1) to node[above,scale=0.8] {$\simeq_{\text{Ho}}$} (C1);
\draw[] (B1) to node[below,scale=0.8] {BF (Thm.\ \ref{BF})} (C1);
\draw[->] (A2) to node[above,scale=0.8] {$\simeq_{\text{Ho}}$} (B2);
\draw[] (A2) to node[below,scale=0.8] {Thm.\ \ref{half_of_SHH_from_UHH}} (B2);
\draw[->] (B2) to node[above,scale=0.8] {$\simeq_{\text{Ho}}$} (C2);
\draw[] (B2) to node[below,scale=0.8] {Cor.\ \ref{half_of_SHH_from_BF}} (C2);
\draw[->] (A3) to node[above,scale=0.8] {$\simeq_{\text{Ho}}$} (B3);
\draw[] (A3) to node[below,scale=0.8] {UHH (Thm.\ \ref{UHH})} (B3);

\node[scale=1.4, rotate=90] at (-2.5,-1) {$\leadsto$};
\node[scale=1.4, rotate=-90] at (2.7,1) {$\leadsto$};
\end{tz}
We consider here the stable homotopy categories for spectra. 

\begin{thm}[Stable homotopy hypothesis]\label{StableHH}
There is an equivalence between the homotopy categories of $\PicGTam^n$ with $n$-equivalences and of $\spectran$ with stable equivalences. 
\end{thm}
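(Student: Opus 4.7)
The plan is to assemble the stable homotopy hypothesis by concatenating three equivalences of homotopy categories that have already been essentially established in the preceding sections, as indicated in the diagram just above the statement.

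First, I would use \cref{PicTam_are_vsGammaGTam} to identify $\PicGTam^n$ with $\vsGammaGTam^n$, the category of very special $\Gamma$-objects in Tamsamani $n$-groupoids. Moreover, by \cref{we_PicTam_vsGammaGTam}, under this identification the weak equivalences of Picard--Tamsamani $n$-categories (i.e.\ those morphisms whose restriction to $\Dsop{n+1}$ is an $(n+1)$-equivalence of one-object Tamsamani $(n+1)$-groupoids) correspond precisely to the levelwise $n$-equivalences of $\Gamma$-objects in $\GTam^n$. Consequently, the localization $\Ho(\PicGTam^n)$ with respect to the weak equivalences of \cref{equivPic} agrees with $\Ho(\vsGammaGTam^n)$ localized at levelwise $n$-equivalences.

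Next, I would invoke \cref{half_of_SHH_from_UHH} to obtain the equivalence
\[
\Ho(\vsGammaGTam^n) \simeq \Ho(\vsGssetn)^{\text{strict}},
\]
given by post-composition with Tamsamani's functors $B$ and $P_n$, which we have already checked preserve the very special condition and send levelwise $n$-equivalences to strict weak equivalences and vice versa. Finally, \cref{half_of_SHH_from_BF} yields the equivalence
\[
\Ho(\vsGssetn)^{\text{strict}} \simeq \Ho(\spectran)^{\text{stable}},
\]
via the Bousfield--Friedlander functors $U\circ LS$ and $R\Phi(\S,-)\circ Q$, which restrict to $n$-types on both sides as shown in \cref{equiv_gamma_and_spectra} and \cref{Ho_spectra_equiv_Ho_omega_spectra}.

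Composing these three equivalences produces the desired equivalence of homotopy categories. There is essentially no hard step remaining, since each of the three links in the chain has been established separately; the only bookkeeping is to confirm that the class of weak equivalences one localizes at is transported correctly under the identifications, which is precisely the content of \cref{we_PicTam_vsGammaGTam}. Thus the proof reduces to writing down the composite equivalence and citing the three results above.
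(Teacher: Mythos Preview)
Your proposal is correct and follows essentially the same approach as the paper: it chains together the identification of $\PicGTam^n$ with $\vsGammaGTam^n$ via \cref{PicTam_are_vsGammaGTam} and \cref{we_PicTam_vsGammaGTam}, then applies \cref{half_of_SHH_from_UHH} and \cref{half_of_SHH_from_BF} in exactly the same order. The only difference is cosmetic---the paper states the three equivalences and concludes, while you add a sentence of justification for why the weak equivalences match up at each stage.
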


\begin{proof}
We have presented a dissected proof in the previous sections, and will now assemble the pieces.
    
By \cref{PicTam_are_vsGammaGTam} and \cref{we_PicTam_vsGammaGTam}, we know that there is an equivalence 
    \[
    \Ho(\PicGTam^n)\simeq\Ho(\vsGammaGTam^n).
    \]
Now, from Tamsamani's unstable homotopy hypothesis (see \cref{UHH}), we have shown in \cref{half_of_SHH_from_UHH} that there is an equivalence
    \[
    \Ho(\vsGammaGTam^n)\simeq\Ho(\vsGssetn),
    \] 
and, from Bousfield-Friedlander's \cref{BF}, we concluded in \cref{half_of_SHH_from_BF} that there is an equivalence
    \[
    \Ho(\vsGssetn)\simeq\Ho(\spectran).
    \]
This completes the proof.
\end{proof}

\section{Segal's $K$-theory functor vs Lack--Paoli's $2$-nerve functor}

In this section, we want to compare Segal's $K$-theory functor to Lack--Paoli's $2$-nerve. To make them comparable, we restrict ourselves to Picard categories and view them either as a special case of a symmetric monoidal category or as a bicategory with one object. We will show that the underlying simplicial objects in groupoids of the two constructions are naturally  equivalent in this case. This will be helpful both in spirit and in the technical sense for the comparison of our stable homotopy hypothesis to the classical result for Picard categories.

\subsection{Segal's $K$-theory functor for Picard categories} \label{KP}
Recall that, for $n\geq 0$, we write $\ord{n}$ for the set $\{0,1,\ldots,n\}$ in $\Gamma$ with basepoint $0$, and $\und{n}$ for the subset of $\ord{n}$ consisting of $\{1,\ldots,n\}$. 

We consider a $K$-theory construction for symmetric monoidal categories due to Segal \cite{SegalCat}. One of the original motivations for this construction was to show that the algebraic $K$-theory of a ring is an infinite loop space. In fact, it was for this reason that Segal first defined $\Gamma$-spaces. Mandell describes Segal's $K$-theory functor $\mathcal{K}\colon \Sym\to \GammaCat$ from the category of symmetric monoidal categories and strictly unital op-lax morphisms to the category of functors from $\Gamma$ to $\cat$ in \cite[Constr.\ 3.1, Variant 3.4]{MandellKtheory}.

We consider the full subcategory $\Pic$ of $\Sym$ consisting of the \emph{Picard categories}, which are symmetric monoidal groupoids with every object invertible with respect to the monoidal structure. We explicitly describe the image of a Picard category under the functor $\mathcal K$, which lands in $\GammaGpd$, the category of functors from $\Gamma$ to $\gpd$. 

Let $(P,\otimes,\mathbf{1})$ be a Picard category. Segal's $K$-theory construction $\KP\colon\Gamma\to \gpd$ for $P$ is defined as follows. 

We first describe $\KP$ on objects. The category $\KP\ord{0}=*$ is the terminal category. For $n\geq 1$, an object in the category $\KP\ord{n}$ is a collection $\{x_I, f_{I,J}\}$ consisting of
\begin{abc}
\item for all $I\subset \und{n}$, an object $x_I\in P$,
\item for all disjoint $I,J\subset \und{n}$, an isomorphism $f_{I,J}\colon x_{I\sqcup J}\to x_I\otimes x_J$ in $P$, 
\end{abc}
such that the following conditions hold:
\begin{rome}
\item $x_{\emptyset}=\mathbf{1}$ and $f_{\emptyset,J}\colon x_J\to \mathbf{1}\otimes x_J$ is the unit isomorphism, for all $J\subset \und{n}$,
\item for all disjoint $I,J\subset \und{n}$, the following diagram commutes, 
\begin{tz}
\node (A) at (0,0) {$x_{I\sqcup J}$};
\node (B) at (0,-2) {$x_{J\sqcup I}$};
\node (C) at (2.5,0) {$x_I\otimes x_J$};
\node (D) at (2.5,-2) {$x_J\otimes x_I$};

\draw[d] (A) to (B);
\draw[->] (A) to node[above,scale=0.8] {$f_{I,J}$} (C);
\draw[->] (B) to node[below,scale=0.8] {$f_{J,I}$} (D);
\draw[->] (C) to node[right,scale=0.8] {$\gamma$} (D);
\end{tz}
where $\gamma$ denotes the symmetry isomorphism of $P$,
\item for all mutually disjoint $I,J,K\subset \und{n}$, the following diagram commutes,
\begin{tz}
\node (A) at (0,0) {$x_{I\sqcup J\sqcup K}$};
\node (B) at (0,-2.5) {$x_I\otimes x_{J\sqcup K}$};
\node (C) at (3.5,0) {$x_{I\sqcup J}\otimes x_K$};
\node (D) at (3.5,-2.5) {$x_I\otimes (x_J\otimes x_K)$};
\node (E) at (6,-1.25) {$(x_I\otimes x_J)\otimes x_K$};

\draw[->] (A) to node[left,scale=0.8] {$f_{I,J\sqcup K}$} (B);
\draw[->] (A) to node[above,scale=0.8] {$f_{I\sqcup J,K}$} (C);
\draw[->] (B) to node[below,scale=0.8] {$\id\otimes f_{J,K}$} (D);
\draw[->] (D) to node[below,scale=0.8] {$\;\;\alpha$} (E);
\draw[->] (C) to node[above,scale=0.8] {$\;\;\;\;\;\;\;\;\;f_{I,J}\otimes \id$} (E);
\end{tz}
where $\alpha$ denotes the associativity isomorphism of $P$. We denote this composite by $f_{I,J,K}$.
\end{rome}

\noindent A morphism $H\colon \{x_I,f_{I,J}\}\to \{y_I,g_{I,J}\}$ is given by a collection of morphisms $\{H_I\colon x_I\to y_I\}$ in $P$, indexed by $I\subset \und{n}$, such that the following conditions hold: 
\begin{rome}
\item $H_\emptyset$ is the identity at $\mathbf{1}$, 
\item for all disjoint $I,J\subset \und{n}$, the following diagram commutes.
\begin{tz}
\node (A) at (0,0) {$x_{I\sqcup J}$};
\node (B) at (0,-2) {$y_{I\sqcup J}$};
\node (C) at (2.5,0) {$x_I\otimes x_J$};
\node (D) at (2.5,-2) {$y_I\otimes y_J$};

\draw[->] (A) to node[left,scale=0.8] {$H_{I\sqcup J}$} (B);
\draw[->] (A) to node[above,scale=0.8] {$f_{I,J}$} (C);
\draw[->] (B) to node[below,scale=0.8] {$g_{I,J}$} (D);
\draw[->] (C) to node[right,scale=0.8] {$H_I\otimes H_J$} (D);
\end{tz}
\end{rome}
Note that, since $P$ is a groupoid, the morphism $H_I$ is invertible in $P$, for all $I\subset \und{n}$, and hence $H$ is also invertible in $\KP\ord{n}$. This shows that $\KP\ord{n}$ is a groupoid. 

Given a morphism $s\colon \ord{n}\to \ord{m}$ in $\Gamma$, we define a functor 
\[ s^*\colon \KP\ord{n}\to \KP\ord{m}. \]
It sends an object $\{x_I,f_{I,J}\}$ in $\KP\ord{n}$ to the object $\{y_K,g_{K,L}\}$ of $\KP\ord{m}$ defined by
\begin{abc}
\item $y_K=x_{s^{-1}K}$, for all $K\subset \und{m}$,
\item $g_{K,L}=f_{s^{-1}K, s^{-1}L}$, for all disjoint $K,L\subset \und{m}$,
\end{abc} 
and a morphism in $\KP\ord{n}$ to the corresponding morphism in $\KP\ord{m}$. These data assemble into a functor $\KP\colon \Gamma\to \gpd$.

\subsection{Lack--Paoli's nerve}
We remind the reader of the definition of the Lack--Paoli $2$-nerve \cite{LackPaoli}. Let $\Nhom$ denote the $2$-category of bicategories, normal homomorphisms (i.e.,~pseudofunctors preserving the identities strictly) and icons (i.e.,~pseudonatural transformations between two functors that agree on objects preserving the identities strictly).

\begin{defn} 
The \emph{$2$-nerve} is defined to be the $2$-functor $N\colon \Nhom\to [\Dop, \cat]$ which sends a bicategory $\mathcal B$ to the simplicial object $\Nhom(-,\mathcal B)\colon \Dop\to \cat$ in categories. 
\end{defn}

For $n\geq 0$, we write $[n]$ for the ordered set $\{0,1,\ldots,n\}$, viewed as an object in $\Delta$. The category $\Nhom([n],\mathcal B)$ is the category of normal homomorphisms from $[n]$ to $\mathcal B$ and icons between them. The categories $[n]$ are here considered as bicategories with no non-identity $2$-morphisms, which gives a fully faithful inclusion of $\Delta$ into $\Nhom$. More explicitly, a normal homomorphism $(B,b,\beta)\colon [n]\to \mathcal B$ consists of 
\begin{abc}
\item for all $i\in [n]$, an object $B_i\in \mathcal{B}$,
\item for all $i\leq j$ in $[n]$, a morphism $b_{ij}\colon B_i\to B_j$ in $\mathcal B$,
\item for all $i\leq j\leq k$ in $[n]$, an invertible $2$-morphism $\beta_{ijk}\colon b_{ik} \to b_{jk}\circ b_{ij}$ in $\mathcal B$, 
\end{abc}
such that the following conditions hold: 
\begin{rome}
\item for all $i\in [n]$, the morphism $b_{ii}$ is the identity at $B_i$, 
\item for all $i\leq j$ in $[n]$, the isomorphisms $\beta_{iij}\colon  b_{ij}\cong b_{ij}\circ\id_{B_i}$ and $\beta_{ijj}\colon b_{ij}\cong \id_{B_j}\circ b_{ij}$ are the unit isomorphisms of $\mathcal B$, 
\item for all $i<j<k<l$ in $[n]$, the following diagram of $2$-morphisms commutes, 
\begin{tz}
\node (A) at (0,0) {$b_{il}$};
\node (B) at (0,-2.5) {$b_{kl}\circ b_{ik}$};
\node (C) at (3.5,0) {$b_{jl}\circ b_{ij}$};
\node (D) at (3.5,-2.5) {$b_{kl}\circ (b_{jk}\circ b_{ij})$};
\node (E) at (6,-1.25) {$(b_{kl}\circ b_{jk})\circ b_{ij}$};

\draw[->] (A) to node[left,scale=0.8] {$\beta_{ikl}$} (B);
\draw[->] (A) to node[above,scale=0.8] {$\beta_{ijl}$} (C);
\draw[->] (B) to node[below,scale=0.8] {$\id* \beta_{ijk}$} (D);
\draw[->] (D) to node[below,scale=0.8] {$\;\;\alpha$} (E);
\draw[->] (C) to node[above,scale=0.8] {$\;\;\;\;\;\;\;\;\;\beta_{jkl}* \id$} (E);
\end{tz}
where $\alpha$ denotes the associativity isomorphism of $\mathcal B$. Note that we omit the equality cases since these follow directly from the axioms of a bicategory and condition (ii). 
\end{rome}
Given another such normal homomorphism $(C,c,\gamma)\colon [n]\to \mathcal B$, an icon 
\[ \varphi\colon (B,b,\beta)\to (C,c,\gamma) \]
consists of 
\begin{abc}
\item for all $i\in [n]$, the satisfaction of the equation $B_i=C_i$, 
\item for all $i\leq j$ in $[n]$, a $2$-morphism $\varphi_{ij}\colon b_{ij}\to c_{ij}$, 
\end{abc}
such that the following conditions hold: 
\begin{rome}
\item for all $i\in [n]$, $\varphi_{ii}$ is the identity at $\id_{B_i}=\id_{C_i}$, 
\item for all $i<j<k$ in $[n]$, the diagram of $2$-morphisms below commutes.
\begin{tz}
\node (A) at (0,0) {$b_{ik}$};
\node (B) at (0,-2) {$c_{ik}$};
\node (C) at (2.5,0) {$b_{jk}\circ b_{ij}$};
\node (D) at (2.5,-2) {$c_{jk}\circ c_{ij}.$};

\draw[->] (A) to node[left,scale=0.8] {$\varphi_{ik}$} (B);
\draw[->] (A) to node[above,scale=0.8] {$\beta_{ijk}$} (C);
\draw[->] (B) to node[below,scale=0.8] {$\gamma_{ijk}$} (D);
\draw[->] (C) to node[right,scale=0.8] {$\varphi_{jk}*\varphi_{ij}$} (D);
\end{tz}
\end{rome}

Given a map $\alpha\colon [m]\to [n]$ in $\Delta$, the $2$-nerve construction induces a functor 
\[ \alpha^*\colon N\mathcal B[n]\to N\mathcal B[m]. \]
This functor sends an object $(B,b,\beta)$ in $N\mathcal B[n]$ to the object $\alpha^*(B,b,\beta)=(B',b',\beta')$ in $N\mathcal B[m]$ given by
\begin{abc}
\item $B'_r=B_{\alpha(r)}$, for all $r\in [m]$,
\item $b'_{rs}=b_{\alpha(r)\alpha(s)}\colon B_{\alpha(r)}\to B_{\alpha(s)}$, for all $r\leq s$ in $[m]$,
\item $\beta'_{rst}=\beta_{\alpha(r)\alpha(s)\alpha(t)}\colon b_{\alpha(r)\alpha(t)} \to b_{\alpha(s)\alpha(t)}\circ b_{\alpha(r)\alpha(s)}$, for all $r\leq s\leq t$ in $[m]$.
\end{abc}
This object $\alpha^*(B,b,\beta)$ satisfies conditions (i), (ii), and (iii) of being an object in $N\mathcal B[m]$, since the object $(B,b,\beta)$ in $N\mathcal B[n]$ does so. 

Moreover, the functor $\alpha^*$ sends a morphism $\varphi\colon (B,b,\beta)\to (C,c,\gamma)$ in $N\mathcal B[n]$ to the morphism $\alpha^*\varphi=\varphi'\colon \alpha^*(B,b,\beta)\to \alpha^*(C,c,\gamma)$ in $N\mathcal B[m]$ given by 
\begin{abc}
\item the fact that $B_{\alpha(r)}=C_{\alpha(r)}$, for all $r\in [m]$, since $B_i=C_i$ for all $i\in [n]$,
\item $\varphi'_{rs}=\varphi_{\alpha(r)\alpha(s)}\colon b_{\alpha(r)\alpha(s)}\to c_{\alpha(r)\alpha(s)}$, for all $r\leq s$ in $[m]$.
\end{abc}
The morphism $\alpha^*\varphi$ satisfies conditions (i) and (ii) of being a morphism in $N\mathcal B[m]$, since the morphism $\varphi$ in $N\mathcal B[n]$ does so.

\begin{ex}
The $2$-nerve of any bicategory finally provides us with examples of Tamsamani $2$-categories. Indeed, \cite{LackPaoli} show that all examples actually arise in this manner 
up to a $2$-equivalence. This indicates in particular that Tamsamani $n$-categories are a true generalization of known concepts of weak $n$-categories. 
\end{ex}

\subsection{Lack--Paoli's nerve for Picard categories} \label{NP}
A Picard category $(P,\otimes,\mathbf 1)$ can be seen as a bicategory with one object $*$ whose hom-category is $P$. 
Horizontal composition is given by the tensor product $\otimes$, and the unit and associativity isomorphisms are given by the ones of the 
monoidal structure on $P$. Note that every morphism and every $2$-morphism in this bicategory is invertible. 

Given a Picard category $(P,\otimes,\mathbf 1)$, we can apply Lack--Paoli's $2$-nerve to its associated bicategory $\mathcal P$. This gives a simplicial object in groupoids
\[ \NP =\Nhom(-,\mathcal P)\colon \Dop\to \gpd. \]
More explicitely, the category $\NP[0]=*$ is the terminal category and, for $n\geq 1$, an object in the category $\NP[n]$ is a collection $\{x_{ij}, f_{ijk}\}$ consisting of
\begin{abc}
\item for all $i\leq j$ in $[n]$, an object $x_{ij}\in P$, 
\item for all $i\leq j\leq k$ in $[n]$, an isomorphism $f_{ijk}\colon x_{ik}\to x_{jk}\otimes x_{ij}$ in $P$, 
\end{abc}
such that the following conditions hold: 
\begin{rome}
\item for all $i\in [n]$, $x_{ii}=\mathbf 1$, 
\item for all $i\leq j$ in $[n]$, the isomorphisms $f_{iij}\colon x_{ij}\cong x_{ij}\otimes \mathbf 1$ and $f_{ijj}\colon x_{ij}\cong \mathbf 1 \otimes x_{ij}$ are the unit isomorphisms of $P$, 
\item for all $i<j<k<l$ in $[n]$, the following diagram commutes, 
\begin{tz}
\node (A) at (0,0) {$x_{il}$};
\node (B) at (0,-2.5) {$x_{kl}\otimes x_{ik}$};
\node (C) at (3.5,0) {$x_{jl}\otimes x_{ij}$};
\node (D) at (3.5,-2.5) {$x_{kl}\otimes (x_{jk}\otimes x_{ij})$};
\node (E) at (6,-1.25) {$(x_{kl}\otimes x_{jk})\otimes x_{ij}$};

\draw[->] (A) to node[left,scale=0.8] {$f_{ikl}$} (B);
\draw[->] (A) to node[above,scale=0.8] {$f_{ijl}$} (C);
\draw[->] (B) to node[below,scale=0.8] {$\id\otimes f_{ijk}$} (D);
\draw[->] (D) to node[below,scale=0.8] {$\;\;\alpha$} (E);
\draw[->] (C) to node[above,scale=0.8] {$\;\;\;\;\;\;\;\;\;f_{jkl}\otimes \id$} (E);
\end{tz}
where $\alpha$ denotes the associativity isomorphism of $P$. 
\end{rome}
A morphism $H\colon \{x_{ij},f_{ijk}\}\to \{y_{ij},g_{ijk}\}$ is given by a collection of morphisms $\{H_{ij}\colon x_{ij}\to y_{ij}\}$ in $P$, indexed by $i\leq j$ in $[n]$, such that the following conditions hold: 
\begin{rome}
\item for all $i\in [n]$, $H_{ii}$ is the identity at $\mathbf 1$, 
\item for all $i<j<k$ in $[n]$, the following diagram commutes.
\begin{tz}
\node (A) at (0,0) {$x_{ik}$};
\node (B) at (0,-2) {$y_{ik}$};
\node (C) at (2.5,0) {$x_{jk}\otimes x_{ij}$};
\node (D) at (2.5,-2) {$y_{jk}\otimes y_{ij}$};

\draw[->] (A) to node[left,scale=0.8] {$H_{ik}$} (B);
\draw[->] (A) to node[above,scale=0.8] {$f_{ijk}$} (C);
\draw[->] (B) to node[below,scale=0.8] {$g_{ijk}$} (D);
\draw[->] (C) to node[right,scale=0.8] {$H_{jk}\otimes H_{ij}$} (D);
\end{tz}
\end{rome}
Note that, for all $i\leq j$ in $[n]$, the morphism $H_{ij}$ is invertible in $P$, since $P$ is a groupoid, and hence $H$ is also invertible in $\NP[n]$. This shows that $\NP[n]$ is a groupoid. 

Given a map $\alpha\colon [m]\to [n]$ in $\Delta$, the induced functor $\alpha^*\colon \NP[n]\to \NP[m]$ sends an object $\{x_{ij},f_{ijk}\}$ in $\NP[n]$, where $i\leq j\leq k$ range over $[n]$, to the object $\{x_{\alpha(r)\alpha(s)},f_{\alpha(r)\alpha(s)\alpha(t)}\}$ in $\NP[m]$, where $r\leq s\leq t$ range over $[m]$, and a morphism $H\colon \{x_{ij},f_{ijk}\}\to \{y_{ij},g_{ijk}\}$ in $\NP[n]$ to the morphism 
\[ \alpha^*H\colon \{x_{\alpha(r)\alpha(s)},f_{\alpha(r)\alpha(s)\alpha(t)}\}\to \{y_{\alpha(r)\alpha(s)},g_{\alpha(r)\alpha(s)\alpha(t)}\} \]
in $\NP[m]$ given by $(\alpha^*H)_{rs}=H_{\alpha(r)\alpha(s)}\colon x_{\alpha(r)\alpha(s)}\to y_{\alpha(r)\alpha(s)}$ for all $r\leq s$ in $[m]$.

\subsection{Comparison between $\KP$ and $\NP$} \label{CompareKN}
Let $(P,\otimes,\mathbf 1)$ be a Picard category. In \cref{KP}, we described the $K$-theory construction $\KP\colon \Gamma\to \gpd$ associated to $P$ and, in \cref{NP}, we described the $2$-nerve construction $\NP\colon \Dop\to \gpd$ applied to the bicategory $\mathcal P$ with one object $*$ associated to $P$. 

Recall that using the functor $\phi\colon \Dop\to \Gamma$, we can consider the underlying $K$-theory functor $U\KP\colon \Dop\to \gpd$ given by the composition of $\phi\colon \Dop\to \Gamma$ with $\KP\colon \Gamma \to \gpd$ as in \cref{DefUnderlyingDelta}. The aim of this subsection is to compare the functors $U\KP$ and $\NP$ in the following sense. 

\begin{thm} \label{KvsNerveThm}
There is a natural equivalence $U\KP\to \NP$ of functors $\Dsop{}\to \cat$. 
\end{thm}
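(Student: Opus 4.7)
The natural equivalence should come from the observation that Lack--Paoli's $2$-nerve $\NP[n]$ is indexed by pairs $i \leq j$ in $[n]$, which correspond precisely to the interval subsets $[i{+}1, j] \subset \und{n}$ of Segal's $K$-theory $\KP\ord{n}$. My plan is therefore to construct $\Psi \colon U\KP \to \NP$ as the ``restriction to intervals'' map. Explicitly, for an object $\{x_I, f_{I,J}\}$ of $\KP\ord{n}$, I would set
\[
\Psi_n\{x_I, f_{I,J}\} = \{y_{ij}, g_{ijk}\}, \quad y_{ij} := x_{[i+1, j]}, \quad g_{ijk} := f_{[j+1,k], [i+1, j]},
\]
and similarly on morphisms by restricting a family $\{H_I\}$ to $\{H_{[i+1,j]}\}$. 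The $\NP$-object axioms then fall out of the $\KP$-axioms restricted to the interval case: axiom (i) uses $x_\emptyset = \mathbf 1$; axiom (ii) uses that $[j+1,j] = \emptyset$ together with the unit isomorphism coming from $f_{\emptyset, -}$; and axiom (iii) is exactly the $\KP$-associativity axiom applied to the three mutually disjoint intervals $[i{+}1, j], [j{+}1, k], [k{+}1, l]$. Naturality in $[n]$ reduces to the fact that for $\alpha \colon [m] \to [n]$ in $\Delta$, the preimage under $\phi(\alpha)$ of an interval $[r{+}1, s] \subset \und{m}$ is the interval $[\alpha(r){+}1, \alpha(s)] \subset \und{n}$, so restriction commutes strictly with the simplicial/Gamma actions.

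Once $\Psi$ is constructed, the main task is to show $\Psi_n$ is an equivalence of groupoids for every $n$. For \emph{essential surjectivity}, given $\{y_{ij}, g_{ijk}\} \in \NP[n]$, I would build a lift $\{x_I, f_{I,J}\} \in \KP\ord{n}$ by setting $x_\emptyset = \mathbf 1$ and, for $I = \{i_1 < \ldots < i_k\}$, defining
\[
x_I := (\cdots ((y_{i_k - 1, i_k} \otimes y_{i_{k-1}-1, i_{k-1}}) \otimes y_{i_{k-2}-1, i_{k-2}}) \otimes \cdots) \otimes y_{i_1 - 1, i_1},
\]
with the isomorphisms $f_{I,J}$ produced from the associativity and symmetry constraints of $P$ as the canonical shuffle into this standard bracketing. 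That the axioms (i)--(iii) hold is a direct application of Mac Lane's coherence theorem for symmetric monoidal categories. Finally, one exhibits an isomorphism $\{y, g\} \cong \Psi_n\{x, f\}$ componentwise on intervals by iterating the isomorphisms $g_{ijk}$ to break $y_{ij}$ into the iterated tensor of its atomic pieces $y_{i, i+1}, y_{i+1, i+2}, \ldots, y_{j-1, j}$; compatibility with the $g_{ijk}$ on both sides reduces again to coherence.

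For \emph{fully faithfulness} I would exploit the fact that the $\KP$-morphism axiom (ii) forces $H_{I \sqcup J} = g_{I,J}^{-1} \circ (H_I \otimes H_J) \circ f_{I,J}$, so that an arbitrary $\{H_I\}$ is determined by its values on singletons $H_{\{i\}} = G_{i-1, i}$. Conversely, given a family $\{G_{ij}\}$ satisfying the $\NP$-axioms, I would define $H_{\{i\}} := G_{i-1, i}$ and propagate to all $H_I$ by the forced formula, organised along the chosen standard bracketing; well-definedness (independence of the order in which we decompose $I$) and verification of the $\KP$-compatibility square for arbitrary $I, J$ are then another coherence check, reducing to Mac Lane's theorem together with the compatibility between $G$ and the interval-$f$'s given by the $\NP$-axiom.

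The main obstacle, as signalled above, is that all three verifications (axioms of the lift, the exhibited isomorphism for essential surjectivity, and the extension for fullness) require invoking coherence for symmetric monoidal categories to conclude that the various pasting diagrams of associators, symmetries, and units commute on the nose. The setup has been arranged so that no choice of bracketing is canonical, and the payoff of working with Picard (hence symmetric monoidal) categories is precisely that coherence is available. No essentially new higher-categorical machinery is needed beyond this; the result is really a structural bookkeeping theorem saying that passing to intervals intertwines two different simplicial enhancements of the same Picard-categorical data.
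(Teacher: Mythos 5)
Your proposal is correct and follows essentially the same route as the paper: your comparison map is exactly the paper's forgetful functor $\mathcal U_n\colon \KP\ord{n}\to \NP[n]$ restricting the $K$-theory data to convex (interval) subsets, naturality reduces to the same computation $\phi(\alpha)^{-1}([r+1,s])=[\alpha(r)+1,\alpha(s)]$, and your lift by tensoring interval pieces mirrors the paper's pseudo-inverse $\mathcal F_n$. The only real difference is one of packaging: you verify that each $\mathcal U_n$ is fully faithful and essentially surjective (decomposing subsets into singletons and leaning on symmetric monoidal coherence), whereas the paper defines $\mathcal F_n$ via the maximal decomposition of a subset into convex pieces, so that $\mathcal U_n\mathcal F_n=\mathrm{id}$ holds on the nose and only the natural isomorphism $\mathrm{id}\Rightarrow\mathcal F_n\mathcal U_n$, assembled from the structure maps $f_{I,J}$, remains to be exhibited.
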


\begin{proof}
The proof of the theorem occupies the remainder of this subsection. 
First recall that $\KP\ord{0}=*=\NP[0]$. Now let $n\geq 1$. Given an object $\{x_{ij},f_{ijk}\}$ in $\NP[n]$, we write it as
\[ \{ x_{[i+1,j]}, f_{[j+1,k]\sqcup [i+1,j]}\}, \]
where $[i+1,j]$ denotes the subset of $\und{n}$ containing all integers between $i+1$ and $j$. We can reformulate by saying that an object of $\NP[n]$ consists of a collection $\{x_I, f_{I,J}\}$, where $I\subset \und{n}$ runs over all convex subsets of $\und{n}$ and $I,J\subset \und{n}$ are disjoint convex subsets of $\und{n}$ such that $I\sqcup J$ is also convex. Similarly, a morphism in $\NP[n]$ consists of a collection $\{H_I\}$, where $I\subset \und{n}$ runs over all convex subsets of $\und{n}$. 

Given an object $\{ x_I, f_{I,J}\}$ in $\KP\ord{n}$, we can forget about all the data given by the non-convex subsets of $\und{n}$ and this gives rise to an object of $\NP[n]$. It is a routine exercise to check that the conditions are the same. Similarly, a morphism in $\KP\ord{n}$ gives rise to a morphism in $\NP[n]$ by forgetting the extra data. This gives a forgetful functor 
\[ {\mathcal U}_n\colon \KP\ord{n}\to \NP[n], \]
for every $n\geq 0$, where ${\mathcal U}_0=\id_*$. We show that these assemble into a natural transformation $\mathcal U\colon U\KP\Rightarrow \NP$ in $[\Dop,\gpd]$ such that each $\mathcal U_n$ is an equivalence of categories.

Let $\alpha\colon [m]\to [n]$ be a map in $\Delta$; we prove that the following square commutes.
\begin{tz}
\node (A) at (0,0) {$\KP\ord{n}$};
\node (B) at (0,-2) {$\KP\ord{m}$};
\node (C) at (2.5,0) {$\NP[n]$};
\node (D) at (2.5,-2) {$\NP[m]$};

\draw[->] (A) to node[left,scale=0.8] {$\phi(\alpha)^*$} (B);
\draw[->] (A) to node[above,scale=0.8] {$\mathcal U_n$} (C);
\draw[->] (B) to node[below,scale=0.8] {$\mathcal U_m$} (D);
\draw[->] (C) to node[right,scale=0.8] {$\alpha^*$} (D);
\end{tz}

Let $\{x_I, f_{I,J}\}$ be an object in $\KP\ord{n}$. By definition, 
\[ \mathcal U_m(\phi(\alpha)^*(\{x_I, f_{I,J}\}))=\{x_{\phi(\alpha)^{-1}([r+1,s])}, f_{\phi(\alpha)^{-1}([s+1,t]),\phi(\alpha)^{-1}([r+1,s])}\}, \]
where $I,J$ run over all disjoint subsets of $\und{n}$, and $r,s,t$ run over all $r\leq s\leq t$ in $[m]$. On the other hand, 
\begin{align*} 
\alpha^*(\mathcal U_n(\{x_I, f_{I,J}\}))&=\alpha^*(\{x_{[i+1,j]},f_{[j+1,k],[i+1,j]}\})\\ &=\{x_{[\alpha(r)+1,\alpha(s)]}, f_{[\alpha(s)+1,\alpha(t)],[\alpha(r)+1,\alpha(s)]}\} 
\end{align*}
where $I,J$ run over all disjoint subsets of $\und{n}$, while $i,j,k$ run over all $i\leq j\leq k$ in $[n]$, and $r,s,t$ run over all $r\leq s\leq t$ in $[m]$. But, for $r\leq s$ in $[m]$, we have that
\[ \phi(\alpha)^{-1}([r+1,s])=\bigsqcup_{l=r+1}^{s} [\alpha(l-1)+1,\alpha(l)]=[\alpha(r)+1, \alpha(s)]. \]
Hence $\mathcal U_m(\phi(\alpha)^*(\{x_I, f_{I,J}\}))=\alpha^*(\mathcal U_n(\{x_I, f_{I,J}\}))$ and we proceed similarly on morphisms. This shows the naturality of $\mathcal U$.

For each $n\geq 0$, let us now construct a functor $\mathcal F_n\colon \NP[n]\to \KP\ord{n}$ such that $\mathcal U_n$ and $\mathcal F_n$ form an equivalence of categories between $\KP\ord{n}$ and $\NP[n]$.

For $n=0$, we set $\mathcal F_0=\id_{*}$. For $n\geq 1$, we define $\mathcal F_n\colon \NP[n]\to \KP\ord{n}$ to be the functor which sends an object $\{x_{[i+1,j]}, f_{[j+1,k],[i+1,j]}\}$ in $\NP[n]$ to the object $\{\overline{x}_I, \overline{f}_{I,J}\}$ of $\KP\ord{n}$ given by
\begin{abc}
\item $\overline{x}_I=x_{[i_{p-1}+1,i_p]}\otimes (\cdots \otimes (x_{[i_3+1,i_4]}\otimes x_{[i_1+1,i_2]})\cdots)$, where $i_1<i_2<\ldots<i_{p-1}<i_p$ give the maximal decomposition of $I=[i_1+1,i_2]\sqcup [i_3+1,i_4]\sqcup \ldots \sqcup [i_{p-1}+1,i_p]$ into convex subsets, when $I\neq \emptyset$, and $\overline{x}_\emptyset=\mathbf{1}$, 

\item the morphisms $\overline{f}_{I,J}$ are induced by the morphisms $f_{[j+1,k],[i+1,j]}$ and the associativity and symmetry isomorphisms of $P$. 
\end{abc}

\begin{rmk}
We can also define $\overline{x}_I$ by induction on the number $p$ of convex subsets in the maximal decomposition of $I$. If $p=0$, then $I=\emptyset$ and set $x_\emptyset=\mathbf{1}$. If $p>0$ and $I=[i_1+1,i_2]\sqcup [i_3+1,i_4]\sqcup \ldots \sqcup [i_{p-1}+1,i_p]$ is the maximal decomposition of $I$ into convex subsets with $i_1<i_2<\ldots<i_{p-1}<i_p$, set $\overline{x}_I=x_{[i_{p-1}+1,i_p]}\otimes \overline{x}_{I\setminus [i_{p-1}+1,i_p]}$, where $I\setminus [i_{p-1}+1,i_p]$ has $p-1$ convex subsets in its maximal decomposition. 
\end{rmk}

A morphism $H$ in $\NP[n]$ is sent by $\mathcal F_n$ to the morphism $\overline{H}$ of $\KP\ord{n}$ given by
\[ \overline{H}_I=H_{[i_{p-1}+1,i_p]}\otimes (\cdots \otimes (H_{[i_3+1,i_4]}\otimes H_{[i_1+1,i_2]})\cdots),\]
where $i_1<i_2<\ldots<i_{p-1}<i_p$ give the maximal decomposition of $I=[i_1+1,i_2]\sqcup [i_3+1,i_4]\sqcup \ldots \sqcup [i_{p-1}+1,i_p]$ into convex subsets, when $I\neq \emptyset$, and $H_\emptyset=\id_{\mathbf{1}}$. This defines a functor $\mathcal F_n$ since the tensor product is functorial. 

Finally, we show that $\mathcal U_n$ and $\mathcal F_n$ form an equivalence of categories between $\KP\ord{n}$ and $\NP[n]$. Clearly, the composite
\[ \NP[n]\xrightarrow{\mathcal F_n}\KP\ord{n}\xrightarrow{\mathcal U_n}\NP[n] \]
is the identity. It remains to show that the composite
\[ \KP\ord{n}\xrightarrow{\mathcal U_n}\NP[n]\xrightarrow{\mathcal F_n}\KP\ord{n} \]
is isomorphic to the identity. We give a natural isomorphism
\[ \eta\colon \id_{\KP\ord{n}}\Longrightarrow \mathcal F_n\mathcal U_n. \]
At $\{x_I,f_{I,J}\}\in \KP\ord{n}$, the component of $\eta$ is given by the collection of morphisms
\[ f_{[i_{p-1}+1,i_p],\ldots,[i_1+1,i_2]}\colon x_I\to x_{[i_{p-1}+1,i_p]}\otimes (\cdots (\cdots \otimes x_{[i_1+1,i_2]})\cdots) \]
where $I=[i_1+1,i_2]\sqcup [i_3+1,i_4]\sqcup \ldots \sqcup [i_{p-1}+1,i_p]$ is the maximal decomposition of $I$ into convex subsets with $i_1<i_2<\ldots<i_{p-1}<i_p$. It is well-defined and natural, since it depends only on the morphisms $f_{I,J}$ and the associativity isomorphism. Moreover, it is a natural isomorphism since each component is an isomorphism. 

We have proven that the functors $\NP$ and $U\KP$ are naturally equivalent in $[\Dop, \cat]$.
\end{proof}

\section{Stable homotopy hypothesis for Picard categories}
\label{SHH_Picard}

The aim of this section is to recover a 
version of the stable homotopy hypothesis for Picard categories  \cite{Sinh}, \cite{DrinfeldInfDim}, \cite{BoyarchenkoNotes}, \cite{Patel}, \cite{JohnsonOsorno} from our more general result. 

We consider the category $\Pic$  
of Picard categories and strong symmetric monoidal functors between them which preserve the unit strictly, and the category $\PicGTam$ of Picard--Tamsamani ($1$-)categories and morphisms of $\Gamma$-groupoids between them. We show that the $K$-theory construction
\[ \mathcal{K}\colon \Pic\to \PicGTam, \]
introduced in \cref{KP}, induces an equivalence between the homotopy categories 
\[ \Ho(\Pic)\to \Ho(\PicGTam), \]
where $\Ho(\Pic)$ is the localization of $\Pic$ with respect to those morphisms in $\Pic$ whose underlying functor is an equivalence of groupoids, and $\Ho(\PicGTam)$ is the localization of $\PicGTam$ with respect to the levelwise equivalences in $[\Gamma,\gpd]$. Recall that, by \cref{we_PicTam_vsGammaGTam} in the case $n=1$, the levelwise equivalences of groupoids in $\PicGTam$ are exactly the weak equivalences in $\PicGTam$ as defined in \cref{equivPic}. 

Combining this equivalence of homotopy categories with the stable homotopy hypothesis for Picard--Tamsamani categories (this is \cref{StableHH} in the case $n=1$), we recover the stable homotopy hypothesis for Picard categories.

\begin{cor}\label{CorollaryPicard} 
The homotopy category $\Ho(\Pic)$ of Picard categories is equivalent to the stable homotopy category $\Ho(\spectra_{[0,1]})$ of stable $1$-types.
\end{cor}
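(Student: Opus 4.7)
\medskip
\noindent\textbf{Proof plan.} The strategy is to factor the desired equivalence through $\PicGTam^1=\PicGTam$ by applying the $n=1$ case of the stable homotopy hypothesis (\cref{StableHH}), which already gives $\Ho(\PicGTam)\simeq\Ho(\spectra_{[0,1]})$. What remains is to show that Segal's $K$-theory construction $\mathcal{K}\colon\Pic\to\PicGTam$ induces an equivalence of homotopy categories $\Ho(\Pic)\xrightarrow{\simeq}\Ho(\PicGTam)$; the corollary then follows by composition.

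\medskip
\noindent First I would verify that $\mathcal{K}$ actually lands in $\PicGTam$. By \cref{PicTam_are_vsGammaGTam}, this amounts to checking that for each Picard category $P$, the functor $\KP\colon\Gamma\to\gpd$ is a very special $\Gamma$-groupoid. The Segal maps $\KP\ord{n}\to \KP\ord{1}^{\times n}$ are equivalences of groupoids because specifying an object of $\KP\ord{n}$ is, up to canonical coherence isomorphisms $f_{I,J}$, the same as specifying the singleton data $(x_{\{1\}},\ldots,x_{\{n\}})$; and $\pi_0\KP\ord{1}=\pi_0 P$ is a group precisely because every object of $P$ admits an inverse under $\otimes$. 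Then I would check that $\mathcal{K}$ preserves weak equivalences: a strong symmetric monoidal functor $F\colon P\to Q$ whose underlying functor is an equivalence of groupoids induces componentwise an equivalence of groupoids $\KP\ord{n}\to \mathcal KQ\ord{n}$, which by \cref{we_PicTam_vsGammaGTam} is a weak equivalence in $\PicGTam$. This yields a well-defined functor $\Ho(\Pic)\to\Ho(\PicGTam)$.

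\medskip
\noindent The heart of the argument is to show this induced functor is essentially surjective and fully faithful. Here I would exploit \cref{KvsNerveThm}: the underlying simplicial object $U\KP$ of $\KP$ is naturally equivalent to the Lack--Paoli $2$-nerve $N\mathcal P$ of the one-object bicategory $\mathcal P$ associated to $P$. Via this bridge, I can transport Lack--Paoli's equivalence between bicategories (with appropriate maps) and Tamsamani $2$-categories to the present $\Gamma$-equivariant setting. For essential surjectivity, given $\XX\in\PicGTam$, its underlying $\Delta^{\op}$-object is a Tamsamani $2$-groupoid with one object, hence (by Lack--Paoli) equivalent to $N\mathcal P$ for a bicategory $\mathcal P$ with one object, i.e., a monoidal groupoid $P$; the $\Gamma$-structure on $\XX$ promotes $P$ to a symmetric monoidal groupoid, and the fact that $\pi_0\XX\ord{1}$ is a group forces $P$ to be Picard. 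Then \cref{KvsNerveThm} supplies the required equivalence $\KP\simeq\XX$. For fully faithfulness, I would similarly descend hom-level information from Lack--Paoli's statement: morphisms in $\Ho(\PicGTam)$ between $\KP$ and $\mathcal KQ$ are, under the natural equivalence of \cref{KvsNerveThm}, in bijection with homotopy classes of morphisms of $2$-nerves, which Lack--Paoli identify with normal homomorphisms of the underlying bicategories modulo icons, and these, restricted along the $\Gamma$-structure, correspond exactly to morphisms in $\Ho(\Pic)$.

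\medskip
\noindent The main obstacle I expect is the coherent transfer of the symmetric monoidal structure on the groupoid extracted from $\XX\ord{1}$: one must check that the $\Gamma$-structure on $\XX$ not only equips $P$ with a tensor product and symmetry (via pseudo-inverses to the Segal equivalences, together with the action of $\Gamma$-maps such as $m\colon\ord 2\to\ord 1$ and the swap of $\ord 2$), but also that the associator, unitor, and symmetry isomorphisms so defined satisfy the MacLane pentagon, triangle, and hexagon axioms up to equivalence. The naturality statement in \cref{KvsNerveThm} and the formulas for the maps $\overline f_{I,J}$ in terms of convex decompositions of $I$ will be what guarantees these coherences; keeping careful track of them is the only point where the argument is not formal. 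Once these are in hand, all remaining comparisons are naturality manipulations, and assembling everything yields $\Ho(\Pic)\simeq\Ho(\PicGTam)\simeq\Ho(\spectra_{[0,1]})$.
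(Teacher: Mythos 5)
Your overall factorization is the paper's: $\Ho(\Pic)\simeq\Ho(\PicGTam)$ via Segal's $K$-theory, then \cref{StableHH} at $n=1$, and your verification that $\mathcal K$ lands in $\PicGTam$ and sends monoidal equivalences to weak equivalences matches \cref{KtheoryPic}. The genuine gap is in the heart of your argument, the equivalence $\Ho(\Pic)\simeq\Ho(\PicGTam)$. You propose to deduce essential surjectivity and full faithfulness of the induced functor from Lack--Paoli transported along \cref{KvsNerveThm}, but \cref{KvsNerveThm} only compares the \emph{underlying simplicial objects} $U\KP$ and $\NP$; it carries no $\Gamma$-equivariant information. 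Isomorphisms in $\Ho(\PicGTam)$ are zigzags of $\Gamma$-natural levelwise equivalences (cf.\ \cref{equivPic}, \cref{we_PicTam_vsGammaGTam}), so knowing that the underlying Tamsamani $2$-groupoid of $\XX$ is $2$-equivalent to $N\mathcal P$ does not yet produce the required equivalence $\mathcal K P\simeq \XX$ in $\PicGTam$: the comparison must respect the full $\Gamma$-structure, and constructing it is exactly the nontrivial content. Likewise, your full-faithfulness step identifies hom-sets in the localization $\Ho(\PicGTam)$ with homotopy classes of maps of $2$-nerves and then with normal homomorphisms modulo icons; but no calculus of fractions or model structure on $\PicGTam$ is available to compute those hom-sets, and Lack--Paoli's biequivalence lives in a $2$-categorical setting (icons/pseudonatural transformations) that does not match the strict $\Gamma$-natural maps of $\PicGTam$. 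Finally, the coherence problem you flag (extracting the symmetric monoidal structure, with pentagon/hexagon axioms, from the $\Gamma$-structure on $\XX$) is not settled by \cref{KvsNerveThm}, which presupposes a Picard category $P$ and says nothing about producing one from a general $\XX$.

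The paper closes precisely these gaps by a different mechanism: it constructs an explicit functor $\mathcal M\colon\PicGTam\to\Pic$ (upgrading Lack--Paoli's adjoint $G$ to incorporate the symmetry, using a chosen pseudo-inverse $T$ of $S_2$ and the $\Gamma$-maps $m$, $\tau$, $\iota_j$; see the subsection containing \cref{Construction_M}), checks a natural isomorphism $\mathcal M\mathcal K\cong\id_{\Pic}$, and then builds an explicit $\Gamma$-natural transformation $\zeta\colon\id_{\PicGTam}\Rightarrow\mathcal K\mathcal M$, defined via the maps $\nu_I$ and $\mu_{I,J}$, and verifies by hand that it is natural in $\Gamma$ and a levelwise equivalence of groupoids. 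This yields \cref{PicPicTam} without ever computing hom-sets in the localization, and the corollary follows by combining \cref{PicPicTam} with \cref{StableHH}. To repair your argument you would either have to carry out essentially this explicit construction (in which case your essential-surjectivity/full-faithfulness detour is unnecessary), or supply the missing $\Gamma$-equivariant enhancement of Lack--Paoli's comparison together with a means of computing morphisms in $\Ho(\PicGTam)$, neither of which is provided by the results you cite.
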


\subsection{The homotopy inverse to $K$-theory}
\label{Construction_M}
Before starting the construction of the homotopy inverse which will occupy most of this subsection, we show that $\mathcal{K}$ actually takes values in $\PicGTam$. 

\begin{lem} \label{KtheoryPic}
The restriction of the $K$-theory functor to Picard categories takes values in Picard--Tamsamani $1$-categories. 
\end{lem}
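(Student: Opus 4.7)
The plan is to invoke \cref{PicTam_are_vsGammaGTam} in the case $n=1$, which tells us that a functor $\mathcal{X}\colon \Gamma\times \Delta^{\op}\to \set$ is a Picard--Tamsamani $1$-category if and only if it lifts to a very special $\Gamma$-object in $\GTam^1=\gpd$. Since $\mathcal{K}P$ already takes values in $\gpd$ by the construction in \cref{KP} (every component morphism $H_I$ is invertible because $P$ is a groupoid), we only need to verify that $\mathcal{K}P$ is very special. This amounts to checking two conditions: that $\mathcal{K}P$ is special in the sense of \cref{DefSpecial}, and that the induced abelian monoid on $\pi_0\mathcal{K}P\ord{1}$ is a group.

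For specialness, recall from the example following \cref{DefUnderlyingDelta} that a $\Gamma$-object in $\gpd$ is special precisely when its underlying simplicial object $U\mathcal{K}P\colon \Delta^{\op}\to \gpd$ satisfies the Segal condition, since $\phi(\nu^j)=\nu_j$. By \cref{KvsNerveThm}, there is a natural equivalence $U\mathcal{K}P\to \NP$ of functors $\Delta^{\op}\to \cat$. The $2$-nerve $\NP$ of the bicategory $\mathcal{P}$ is a Tamsamani $2$-category (as noted in the last example of \cref{NP}), so its Segal maps are equivalences of categories, and they land in groupoids since $U\mathcal{K}P$ does. Transporting this along the levelwise equivalence $U\mathcal{K}P\simeq \NP$ yields that the Segal maps of $U\mathcal{K}P$ are equivalences of groupoids, proving specialness of $\mathcal{K}P$.

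For the group condition, observe that an object of $\mathcal{K}P\ord{1}$ is determined by the single object $x_{\{1\}}\in P$ (with $x_\emptyset=\mathbf{1}$ and the structure isomorphisms forced to be the unit isomorphisms), and a morphism is determined by the component $H_{\{1\}}\colon x_{\{1\}}\to y_{\{1\}}$; this exhibits an equivalence of groupoids $\mathcal{K}P\ord{1}\simeq P$. Consequently $\pi_0\mathcal{K}P\ord{1}\cong \pi_0 P$ as monoids, where the monoid structure on both sides is induced by $\otimes$. Since $P$ is a Picard category, every object has a $\otimes$-inverse, and thus $\pi_0 P$ is an abelian group.

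Combining these two steps shows that $\mathcal{K}P$ is a very special $\Gamma$-object in $\gpd$, and by \cref{PicTam_are_vsGammaGTam} this is exactly the statement that $\mathcal{K}P\in \PicGTam^1$. The main obstacle is the Segal condition for $\mathcal{K}P$, but this is precisely the work done in \cref{CompareKN} via the comparison with the Lack--Paoli $2$-nerve, so after that theorem is available the verification is essentially a bookkeeping exercise.
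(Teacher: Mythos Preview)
Your proof is correct and follows essentially the same route as the paper: both reduce via \cref{PicTam_are_vsGammaGTam} to checking that $\mathcal{K}P$ is a very special $\Gamma$-object in groupoids, obtain specialness from \cref{KvsNerveThm} together with the fact that $N\mathcal{P}$ is a Tamsamani $2$-category, and verify the group condition by identifying $\pi_0\mathcal{K}P\ord{1}\cong \pi_0 P$ with its $\otimes$-induced monoid structure. The only cosmetic difference is that you spell out the equivalence $\mathcal{K}P\ord{1}\simeq P$ more explicitly than the paper does.
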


\begin{proof}
From \cite{MandellKtheory}, we know that the $K$-theory construction takes values in $\Gamma$-categories. 
As we discussed already in \cref{KP}, the $K$-theory functor actually takes values in $\Gamma$-objects in groupoids if we start with a Picard category. 

Given a Picard category $(P,\otimes,\mathbf{1})$, it remains to show that $\KP$ is a very special $\Gamma$-object. Since being special only depends on the underlying simplicial object, as explained in \cref{PicTam_are_vsGammaGTam}, and is invariant under levelwise equivalences of simplicial objects, we conclude from \cref{KvsNerveThm} that $\KP$ is special, since $\NP$ is a Tamsamani $2$-category \cite{LackPaoli}. 

Finally, we need to check that $\pi_0\KP\ord{1}$ with the monoid structure from the Segal condition is a group. 
We observe that $\pi_0\KP\ord{1}\cong \pi_0 P$ and that the monoid structure on $\pi_0\KP\ord{1}$ is given by 
\[ \pi_0 \KP\ord{1}\times \pi_0 \KP\ord{1} \cong \pi_0 \KP\ord{2}\xrightarrow{\pi_0(m^*)} \pi_0 \KP\ord{1},\; (x_1,x_2)\mapsto x_1\otimes x_2. \]
Hence, $\pi_0 \KP\ord{1}$ is isomorphic as a monoid to $\pi_0 P$ with the monoid structure coming from $\otimes$. By definition of the Picard category $P$, the monoid $\pi_0 P$ is an abelian group, and thus so is $\pi_0\KP\ord{1}$. We conclude that $\KP$ is a Picard--Tamsamani category. 
\end{proof}

We now construct the homotopy inverse 
\[ \mathcal M\colon \PicGTam\to \Pic. \]
Our inspiration for this construction comes from the fact that the underlying simplicial objects of the K-theory and of Lack--Paoli’s $2$-nerve of a Picard 
category are levelwise equivalent, as shown in \cref{CompareKN}. With this in mind, we are able to upgrade the functor $G\colon \Tam^2\to\Nhom$, serving as an adjoint to the $2$-nerve in \cite[\textsection 7]{LackPaoli}, in order to incorporate the symmetry present in our context. Other variants of this construction are well-known in the literature. 

Let $\XX\colon \Gamma\to \gpd$ be a Picard--Tamsamani category. There is a natural way to construct a Picard category $\mathcal M \XX$ using the structure of the category $\Gamma$, where the underlying category is given by the groupoid $\XX\ord{1}$. More precisely, the symmetric monoidal structure on $\XX\ord{1}$ is defined as follows. 
\begin{itemize} 
\item Since the Segal map $S_2\colon \XX\ord{2}\to \XX\ord{1}\times \XX\ord{1}$ is an equivalence, we can choose an inverse $T\colon \XX\ord{1}\times \XX\ord{1}\to \XX\ord{2}$ for $S_2$ together with natural isomorphisms $\id_{\XX\ord{2}} \cong T\circ S_2$ and $\id_{\XX\ord{1}\times\XX\ord{1}}\cong S_2\circ T$. To be more concise in what follows, when we say that we choose an inverse $T$ for $S_2$, we admit that it also comes with the data of the two isomorphisms as above.
\item We define a functor $\otimes\colon \XX\ord{1}\times \XX\ord{1}\to \XX\ord{1}$ given by $\otimes=\XX(m)\circ T$, and an isomorphism $\sigma\colon \XX(m)\Rightarrow \otimes\circ S_2$ induced by $\id_{\XX\ord{2}} \cong T\circ S_2$, as in
\begin{tz}
\node (A) at (0,0) {$\XX\ord{2}$};
\node (B) at (2.5,0) {$\XX\ord{1}\times \XX\ord{1}$};
\node (C) at (2.5,-2) {$\XX\ord{1}$};
\draw[->] (A) to node[above,scale=0.8] {$S_2$} (B);
\draw[->] (A) to node[left,scale=0.8] {$\XX(m)\;\;$} (C);
\draw[->] (B) to node[right,scale=0.8] {$\otimes$} (C);

\node[scale=0.8] at (1.8,-.6) {$\sigma$};
\node at (1.8,-.9) {$\Rightarrow$};
\end{tz}
where $m\colon \ord{2}\to \ord{1}$ is such that $m(1)=m(2)=1$. This gives the monoidal product $\otimes\colon \XX\ord{1}\times \XX\ord{1}\to \XX\ord{1}$.
\item The monoidal unit
\[ u\colon *=\XX\ord{0} \xrightarrow{\XX(a)} \XX\ord{1} \]
is induced by the unique map $a\colon \ord{0}\to \ord{1}$.
\item The unitors $\lambda\colon \mathrm{id}\Rightarrow \otimes\circ (\mathrm{id},u)$ and $\rho\colon \mathrm{id}\Rightarrow \otimes\circ (u,\mathrm{id})$ are given by 
\begin{tz}
\node (Z) at (-2.5,0) {$\XX\ord{1}$};
\node (A) at (0,0) {$\XX\ord{2}$};
\node (B) at (2.5,0) {$\XX\ord{1}\times \XX\ord{1}$};
\node (C) at (2.5,-2) {$\XX\ord{1}$};
\draw[->] (A) to node[above,scale=0.8] {$S_2$} (B);
\draw[->] (A) to node[left,scale=0.8] {$\XX(m)\;\;$} (C);
\draw[->] (B) to node[right,scale=0.8] {$\otimes$} (C);
\draw[->] (Z) to [bend right=10] node[below,scale=0.8] {$\mathrm{id}$} (C);
\draw[->] (Z) to node[above,scale=0.8] {$\XX(\iota_1)$} (A);
\draw[->] (Z) to [bend left=30] node[above,scale=0.8] {$(\mathrm{id},u)$} (B);

\node[scale=0.8] at (1.8,-.6) {$\sigma$};
\node at (1.8,-.9) {$\Rightarrow$};

\node (Z) at (5,0) {$\XX\ord{1}$};
\node (A) at (7.5,0) {$\XX\ord{2}$};
\node (B) at (10,0) {$\XX\ord{1}\times \XX\ord{1}$};
\node (C) at (10,-2) {$\XX\ord{1}$};
\draw[->] (A) to node[above,scale=0.8] {$S_2$} (B);
\draw[->] (A) to node[left,scale=0.8] {$\XX(m)\;\;$} (C);
\draw[->] (B) to node[right,scale=0.8] {$\otimes$} (C);
\draw[->] (Z) to [bend right=10] node[below,scale=0.8] {$\mathrm{id}$} (C);
\draw[->] (Z) to node[above,scale=0.8] {$\XX(\iota_2)$} (A);
\draw[->] (Z) to [bend left=30] node[above,scale=0.8] {$(u,\mathrm{id})$} (B);

\node[scale=0.8] at (9.3,-.6) {$\sigma$};
\node at (9.3,-.9) {$\Rightarrow$};
\end{tz}
where $\iota_j\colon \ord{1}\to \ord{2}$ is such that $\iota_j(1)=j$ for $j=1,2$. 
\item For the symmetry isomorphism $\gamma\colon \otimes\circ\, \text{twist}\Rightarrow \otimes$, consider the following pasting diagrams 
\begin{tz}
\node (Y) at (0,2) {$\XX\ord{2}$};
\node (Z) at (2.5,2) {$\XX\ord{1}\times \XX\ord{1}$};
\node (A) at (0,0) {$\XX\ord{2}$};
\node (B) at (2.5,0) {$\XX\ord{1}\times \XX\ord{1}$};
\node (C) at (2.5,-2) {$\XX\ord{1}$};
\draw[->] (A) to node[above,scale=0.8] {$S_2$} (B);
\draw[->] (A) to node[left,scale=0.8] {$\XX(m)\;\;$} (C);
\draw[->] (B) to node[right,scale=0.8] {$\otimes$} (C);
\draw[->] (Y) to node[left,scale=0.8] {$\XX(\tau)$} (A);
\draw[->] (Y) to node[above,scale=0.8] {$S_2$} (Z);
\draw[->] (Z) to node[right,scale=0.8] {twist} (B);
\draw[->] (Z) to [bend left=60] node[right,scale=0.8] {$\otimes$} (C);

\node[scale=0.8] at (1.8,-.6) {$\sigma$};
\node at (1.8,-.9) {$\Rightarrow$};

\node[scale=0.8] at (3.2,-.4) {$\gamma$};
\node at (3.2,-.7) {$\Rightarrow$};

\node at (4.7,0) {$=$};

\node (A) at (5.5,1) {$\XX\ord{2}$};
\node (B) at (8,1) {$\XX\ord{1}\times \XX\ord{1}$};
\node (C) at (8,-1) {$\XX\ord{1}$};
\draw[->] (A) to node[above,scale=0.8] {$S_2$} (B);
\draw[->] (A) to node[left,scale=0.8] {$\XX(m)\;\;$} (C);
\draw[->] (B) to node[right,scale=0.8] {$\otimes$} (C);

\node[scale=0.8] at (7.3,.4) {$\sigma$};
\node at (7.3,.1) {$\Rightarrow$};
\end{tz}
where $\tau\colon \ord{2}\to \ord{2}$ is such that $\tau(1)=2$ and $\tau(2)=1$. Since we have chosen an inverse $T$ for $S_2$, we can define a unique invertible $\gamma\colon \otimes\circ \text{ twist}\Rightarrow \otimes$ using the data of $T$ such that the two pasting diagrams above are equal.
\item The associativity isomorphism $\alpha\colon \otimes \circ (\mathrm{id}\times \otimes) \Rightarrow \otimes \circ (\otimes \times \mathrm{id})$ is obtained in a similar way (see \cite{LackPaoli} for more details). 
\end{itemize}

One can then check that $\mathcal M \XX$ is a Picard category. The invertibility of the objects with respect to $\otimes$ comes from the fact that $\pi_0 \XX\ord{1}$ is an abelian group when endowed with the product
\[ \pi_0 \XX\ord{1}\times \pi_0 \XX\ord{1}\cong \pi_0 \XX\ord{2}\xrightarrow{\pi_0 \XX(m)} \pi_0 \XX\ord{1}, \]
where the first isomorphism is induced by the Segal map $S_2\colon \XX\ord{2}\to \XX\ord{1}\times \XX\ord{1}$.

Now let $F\colon \XX\to \YY$ be a morphism in $\PicGTam$. We define the strong symmetric monoidal functor $\mathcal M F\colon \mathcal M \XX\to \mathcal M \YY$ as follows. It is the functor $F_1\colon \XX\ord{1}\to \YY\ord{1}$ on the underlying categories. The compatibility isomorphism is given by the unique isomorphism $\psi\colon F_1\circ \otimes\Rightarrow \otimes'\circ (F_1\times F_1)$, induced by the data of the chosen inverses for $S_2$ and $S'_2$, 
such that
\begin{tz}
\node (A) at (-7.5,2) {$\XX\ord{2}$};
\node (B) at (-5,2) {$\XX\ord{1}\times \XX\ord{1}$};
\node (C) at (-5,0) {$\XX\ord{1}$};
\node (D) at (-5,-2) {$\YY\ord{1}$};
\draw[->] (A) to node[above,scale=0.8] {$S_2$} (B);
\draw[->] (A) to node[left,scale=0.8] {$\XX(m)\;\;$} (C);
\draw[->] (B) to node[right,scale=0.8] {$\otimes$} (C);
\draw[->] (C) to node[left,scale=0.8] {$F_1$} (D);
\draw[->] (B) to [bend left=60] node[right,scale=0.8] {$\otimes\circ (F_1\times F_1)$} (D);

\node[scale=0.8] at (-5.7,1.4) {$\sigma$};
\node at (-5.7,1.1) {$\Rightarrow$};

\node[scale=0.8] at (-4.3,-.4) {$\psi$};
\node at (-4.3,-.7) {$\Rightarrow$};

\node at (-1.25,0) {$=$};

\node (Y) at (0,2) {$\XX\ord{2}$};
\node (Z) at (2.5,2) {$\XX\ord{1}\times \XX\ord{1}$};
\node (A) at (0,0) {$\YY\ord{2}$};
\node (B) at (2.5,0) {$\YY\ord{1}\times \YY\ord{1}$};
\node (C) at (2.5,-2) {$\YY\ord{1}$};
\draw[->] (A) to node[above,scale=0.8] {$S'_2$} (B);
\draw[->] (A) to node[left,scale=0.8] {$\YY(m)\;\;$} (C);
\draw[->] (B) to node[right,scale=0.8] {$\otimes'$} (C);
\draw[->] (Y) to node[left,scale=0.8] {$F_2$} (A);
\draw[->] (Y) to node[above,scale=0.8] {$S_2$} (Z);
\draw[->] (Z) to node[right,scale=0.8] {$F_1\times F_1$} (B);

\node[scale=0.8] at (1.8,-.6) {$\sigma'$};
\node at (1.8,-.9) {$\Rightarrow$};
\end{tz}
This strong monoidal functor preserves the unit strictly since the diagram
\begin{tz}
\node (Y) at (0,2) {$*=\XX\ord{0}$};
\node (Z) at (3,2) {$\XX\ord{1}$};
\node (A) at (0,0) {$*=\YY\ord{0}$};
\node (B) at (3,0) {$\YY\ord{1}$};
\draw[->] (A) to node[below,scale=0.8] {$u'=\YY(a)$} (B);
\draw[->] (Y) to node[left,scale=0.8] {$\mathrm{id}$} (A);
\draw[->] (Y) to node[above,scale=0.8] {$u=\XX(a)$} (Z);
\draw[->] (Z) to node[right,scale=0.8] {$F_1$} (B);
\end{tz}
commutes by naturality of $F$. Moreover, it is compatible with the symmetry isomorphisms of $\mathcal M \XX$ and $\mathcal M \YY$. 

Since we have fixed a choice of inverse $T$ for the Segal map $S_2$ of each $\XX$, the compatibility isomorphism of a composite $G\circ F$ will be the composite of the compatibility isomorphisms of $F$ and $G$, respectively, for every two composable morphisms $F$, $G$ in $\PicGTam$. This comes from the fact that the compatibility isomorphisms are uniquely determined by the data of the chosen inverses. Hence, the construction $\mathcal M$ defines a functor $\mathcal M\colon \PicGTam\to \Pic$.  

Given a Picard category $P$, one can check that $\mathcal M \mathcal K P=\mathcal K P\ord{1} \cong P$, where the symmetric monoidal structures are isomorphic to each other. This gives a natural isomorphism $\mathcal M\mathcal K\cong \mathrm{id}_{\Pic}$ between endofunctors of $\Pic$. It remains to define a natural transformation 
\[ \zeta\colon \mathrm{id}_{\PicGTam}\Rightarrow \mathcal K\mathcal M \]
which is a levelwise equivalence in order to show that the functors $\mathcal K$ and $\mathcal M$ induce an equivalence between the homotopy categories of Picard categories and Picard--Tamsamani $1$-categories.

\subsection{The natural equivalence $\zeta\colon \XX\to \mathcal K\mathcal M \XX$}
We want to construct a natural transformation $\zeta\colon \XX\to \mathcal{K}\mathcal{M}\XX$, for every Picard--Tamsamani category $\XX$, which is levelwise an equivalence of groupoids. Recall that the levelwise equivalences are precisely the morphisms with respect to which we localize to obtain the homotopy category $\Ho(\PicGTam)$. We define the natural transformation $\zeta$ componentwise. For $\ord{n}\in \Gamma$, we need to define a functor
\[
\zeta_n\colon \XX\ord{n} \to (\mathcal{K}\mathcal{M} \XX)\ord{n}.
\]
This is almost a special case of \cite[\textsection 7]{LackPaoli}, but we need to take the symmetry into account.

\begin{notation}
For any $I \subset \und{n}$, denote by $\nu_I\colon \ord{n}\to \ord{1}$ the morphism in $\Gamma$ such that $\nu_I^{-1}(1)=I$. Note that $\nu_{\{i\}}$ is the map $\nu_i$ introduced in \cref{DefSpecial}, for each $i\in \und{n}$. For any disjoint $I,J\subset \und{n}$, denote by $\mu_{I,J} \colon \ord{n}\to \ord{2}$ the morphism in $\Gamma$ such that $\mu_{I,J}^{-1}(1)=I$ and $\mu_{I,J}^{-1}(2)=J$. Note that we have the relations $\nu_1\circ \mu_{I,J}=\nu_I$, $\nu_2\circ \mu_{I,J}=\nu_J$, and $m\circ \mu_{I,J}=\nu_{I\sqcup J}$.
\end{notation}

Assume now that we are given an object $x\in \XX\ord{n}$. We describe the value of 
\[ \zeta_n(x)=\{y_I, f_{I,J}\}\in \mathcal (K\mathcal M \XX)\ord{n}. \]
Set $y_I=\XX(\nu_I)(x) \in \XX\ord{1}$ for any $I\subset\und{n}$, and remember that the objects of $\XX\ord{1}$ are precisely the objects of $\mathcal{M}\XX$. Given two disjoint subsets $I,J\subset\und{n}$, we define the morphism
\[
f_{I,J}\colon y_{I\sqcup J} \to y_I \otimes y_J
\]
to be the morphism $f_{I,J}\colon \XX(\nu_{I\sqcup J})(x)\to \XX(\nu_I)(x)\otimes \XX(\nu_J)(x)$ given by the component at $x\in \XX\ord{n}$ of the $2$-morphism
\begin{tz}
\node(T) at (5.5,1) {$\XX\ord{n}$};
\node (A) at (8,1) {$\XX\ord{2}$};
\node (B) at (10.5,1) {$\XX\ord{1}\times \XX\ord{1}$};
\node (C) at (10.5,-1) {$\XX\ord{1}$.};
\draw[->] (A) to node[above,scale=0.8] {$S_2$} (B);
\draw[->] (A) to node[left,scale=0.8] {$\XX(m)\;\;$} (C);
\draw[->] (B) to node[right,scale=0.8] {$\otimes$} (C);
\draw[->] (T) to node[above,scale=0.8] {$\XX(\mu_{I,J})$} (A);
\draw[->] (T) to [bend right=10] node[below,scale=0.8] {$\XX(\nu_{I\sqcup J})\;\;\;\;\;\;\;\;$} (C);
\draw[->] (T) to [bend left=30] node[above,scale=0.8] {$(\XX(\nu_I),\XX(\nu_J))$} (B);

\node[scale=0.8] at (9.8,.4) {$\sigma$};
\node at (9.8,.1) {$\Rightarrow$};
\end{tz}
We need to show that this indeed defines an object in $(\mathcal{K}\mathcal{M} \XX)\ord{n}$. 

If we take $I=\emptyset$, then $\nu_\emptyset\colon \ord{n}\to \ord{1}$ factors through the unique map $a\colon \ord{0}\to \ord{1}$. Therefore 
\begin{tz}
\node (A) at (0,0) {$\XX\ord{n}$};
\node (B) at (3,0) {$\XX\ord{1}$};
\node (C) at (1.5,-1.5) {$*=\XX\ord{0}$};
\draw[->] (A) to node[above,scale=0.8] {$\XX(\nu_\emptyset)$} (B);
\draw[->] (A) to node[left,scale=0.8] {$!\;\;$} (C);
\draw[->] (C) to node[right,scale=0.8] {$\;u=\XX(a)$} (B);
\end{tz}
and $y_\emptyset=\XX(\nu_\emptyset)(x)=u$. Moreover, the map $\mu_{\emptyset,J}\colon \ord{n}\to \ord{2}$ factors as the composite 
\[ \ord{n}\xrightarrow{\nu_J}\ord{1}\xrightarrow{\iota_2}\ord{2}. \]
Therefore we have that the morphism $f_{\emptyset,J}\colon y_J\to u\otimes y_J$ is given by the component at $x\in \XX\ord{n}$ of the $2$-morphism 
\begin{tz}
\node(T) at (2.5,0) {$\XX\ord{n}$};
\node (Z) at (5,0) {$\XX\ord{1}$};
\node (A) at (7.5,0) {$\XX\ord{2}$};
\node (B) at (10,0) {$\XX\ord{1}\times \XX\ord{1}$};
\node (C) at (10,-2) {$\XX\ord{1}$};
\draw[->] (A) to node[above,scale=0.8] {$S_2$} (B);
\draw[->] (A) to node[left,scale=0.8] {$\XX(m)\;\;$} (C);
\draw[->] (B) to node[right,scale=0.8] {$\otimes$} (C);
\draw[->] (Z) to [bend right=10] node[below,scale=0.8] {$\mathrm{id}$} (C);
\draw[->] (Z) to node[above,scale=0.8] {$\XX(\iota_2)$} (A);
\draw[->] (Z) to [bend left=30] node[above,scale=0.8] {$(u,\mathrm{id})$} (B);
\draw[->] (T) to node[above,scale=0.8] {$\XX(\nu_J)$} (Z);

\node[scale=0.8] at (9.3,-.6) {$\sigma$};
\node at (9.3,-.9) {$\Rightarrow$};
\end{tz}
which is, by definition of $\mathcal M \XX$, the unit isomorphism $\rho_{y_J}\colon y_J\to u\otimes y_J$. Similarly we get $f_{I,\emptyset}=\lambda_{y_I}\colon y_I\to y_I\otimes u$.

It remains to show that the following triangle commutes
\begin{tz}
\node (A) at (0,0) {$y_{I\sqcup J}$};
\node (B) at (3,0) {$y_I\otimes y_J$};
\node (C) at (1.5,-1.5) {$y_J\otimes y_I$};
\draw[->] (A) to node[above,scale=0.8] {$f_{I,J}$} (B);
\draw[->] (A) to node[left,scale=0.8] {$f_{J,I}\;$} (C);
\draw[->] (C) to node[right,scale=0.8] {$\;\gamma$} (B);
\end{tz}
The map $\mu_{J,I}\colon \ord{n}\to \ord{2}$ factors as the composite
\[ \ord{n}\xrightarrow{\mu_{I,J}}\ord{2}\xrightarrow{\tau}\ord{2}. \]
Hence the compatibility with the symmetry follows from the equality of the two pasting diagrams. 
\begin{tz}
\node (T) at (-2.5,0) {$\XX\ord{n}$};
\node (Y) at (0,2) {$\XX\ord{2}$};
\node (Z) at (2.5,2) {$\XX\ord{1}\times \XX\ord{1}$};
\node (A) at (0,0) {$\XX\ord{2}$};
\node (B) at (2.5,0) {$\XX\ord{1}\times \XX\ord{1}$};
\node (C) at (2.5,-2) {$\XX\ord{1}$};
\draw[->] (A) to node[above,scale=0.8] {$S_2$} (B);
\draw[->] (A) to node[left,scale=0.8] {$\XX(m)\;\;$} (C);
\draw[->] (B) to node[right,scale=0.8] {$\otimes$} (C);
\draw[->] (Y) to node[left,scale=0.8] {$\XX(\tau)$} (A);
\draw[->] (Y) to node[above,scale=0.8] {$S_2$} (Z);
\draw[->] (Z) to node[right,scale=0.8] {twist} (B);
\draw[->] (T) to node[above,scale=0.8] {$\XX(\mu_{J,I})$} (A);
\draw[->] (T) to node[left,scale=0.8] {$\XX(\mu_{I,J})\;$} (Y);
\draw[->] (T) to [bend right=10] node[below,scale=0.8] {$\XX(\nu_{I\sqcup J})\;\;\;\;\;\;\;\;$} (C);
\draw[->] (Z) to [bend left=60] node[right,scale=0.8] {$\otimes$} (C);

\node[scale=0.8] at (1.8,-.6) {$\sigma$};
\node at (1.8,-.9) {$\Rightarrow$};

\node[scale=0.8] at (3.2,-.4) {$\gamma$};
\node at (3.2,-.7) {$\Rightarrow$};

\node at (4.7,0) {$=$};

\node(T) at (5.5,1) {$\XX\ord{n}$};
\node (A) at (8,1) {$\XX\ord{2}$};
\node (B) at (10.5,1) {$\XX\ord{1}\times \XX\ord{1}$};
\node (C) at (10.5,-1) {$\XX\ord{1}$};
\draw[->] (A) to node[above,scale=0.8] {$S_2$} (B);
\draw[->] (A) to node[left,scale=0.8] {$\XX(m)\;\;$} (C);
\draw[->] (B) to node[right,scale=0.8] {$\otimes$} (C);
\draw[->] (T) to node[above,scale=0.8] {$\XX(\mu_{I,J})$} (A);
\draw[->] (T) to [bend right=10] node[below,scale=0.8] {$\XX(\nu_{I\sqcup J})\;\;\;\;\;\;\;\;$} (C);

\node[scale=0.8] at (9.8,.4) {$\sigma$};
\node at (9.8,.1) {$\Rightarrow$};
\end{tz}

Next, we need to define the value of $\zeta_n$ on morphisms. Let $\varphi \colon x \to x'$ be a morphism in $\XX\ord{n}$. We define the map $\zeta_n(\varphi)$ by 
\[
\zeta_n(\varphi)_I=\XX(\nu_I)(\varphi) \colon \XX(\nu_I)(x) \to \XX(\nu_I)(x'), 
\]
for every $I\subset \und{n}$. First, note that $\XX(\nu_\emptyset)(\varphi)=\mathrm{id}_u$. Then, by naturality of $\sigma\colon \XX(m)\Rightarrow \otimes\circ S_2$, we have that the following diagram commutes.
\begin{tz}
\node (Y) at (0,2) {$\XX(\nu_{I\sqcup J})(x)=\XX(m)\XX(\mu_{I,J})(x)$};
\node (Z) at (7,2) {$\otimes\circ S_2(\XX(\mu_{I,J})(x))$};
\node (A) at (0,0) {$\XX(\nu_{I\sqcup J})(x')=\XX(m)\XX(\mu_{I,J})(x')$};
\node (B) at (7,0) {$\otimes\circ S_2(\XX(\mu_{I,J})(x'))$};
\draw[->] (A) to node[below,scale=0.8] {$\sigma_{\XX(\mu_{I,J})(x')}=f'_{I,J}$} (B);
\draw[->] (Y) to node[left,scale=0.8] {$\XX(\nu_{I\sqcup J})(\varphi)=\XX(m)\XX(\mu_{I,J})(\varphi)$} (A);
\draw[->] (Y) to node[above,scale=0.8] {$\sigma_{\XX(\mu_{I,J})(x)}=f_{I,J}$} (Z);
\draw[->] (Z) to node[right,scale=0.8] {$\otimes\circ S_2(\XX(\mu_{I,J})(\varphi))$} (B);
\end{tz}
where the morphism on the right corresponds to the morphism $\XX(\nu_I)(\varphi)\otimes \XX(\nu_J)(\varphi)$. This shows that $\zeta_n(\varphi)$ is indeed a morphism in $(\mathcal{K}\mathcal{M} \XX)\ord{n}$.

Moreover, this construction is 
compatible with composition and identity in $\XX\ord{n}$, since $\XX(\nu_I)$ is a functor from $\XX\ord{n}$ to $\XX\ord{1}$. Hence this defines a functor $\zeta_n\colon \XX\ord{n}\to (\mathcal K\mathcal M \XX)\ord{n}$ in $\gpd$.

We show that the functors $\zeta_n$, for $n\geq 0$, assemble into a natural transformation $\zeta\colon \XX \to \mathcal{K}\mathcal{M} \XX$ from $\Gamma$ to $\gpd$. Let $\rho\colon \ord{n} \to \ord{n'}$ be a morphism in $\Gamma$. Note that, for all disjoint $K,L\subset \und{n}'$, the following diagrams commute in $\Gamma$:
\begin{tz}
\node (A) at (0,0) {$\ord{n}$};
\node (B) at (3,0) {$\ord{n'}$};
\node (C) at (1.5,-1.5) {$\ord{1}$};
\draw[->] (A) to node[above,scale=0.8] {$\rho$} (B);
\draw[->] (A) to node[left,scale=0.8] {$\nu_{\rho^{-1}(K)}\;$} (C);
\draw[->] (B) to node[right,scale=0.8] {$\;\nu_K$} (C);

\node (A) at (6,0) {$\ord{n}$};
\node (B) at (9,0) {$\ord{n'}$};
\node (C) at (7.5,-1.5) {$\ord{2}$};
\draw[->] (A) to node[above,scale=0.8] {$\rho$} (B);
\draw[->] (A) to node[left,scale=0.8] {$\mu_{\rho^{-1}(K),\rho^{-1}(L)}\;$} (C);
\draw[->] (B) to node[right,scale=0.8] {$\;\mu_{K,L}$} (C);
\end{tz} 
Given an object $x \in \XX\ord{n}$, we have 
\begin{align*}
\zeta_{n'}(\XX(\rho)(x))&= (\XX(\nu_K)\XX(\rho)(x), \sigma_{\XX(\mu_{K,L})\XX(\rho)(x)})_{K\sqcup L\subset \underline{n}'} \\
&=(\XX(\nu_{\rho^{-1}(K)})(x),\sigma_{\XX(\mu_{\rho^{-1}(K),\rho^{-1}(L)})(x)})_{K\sqcup L\subset \underline{n}'} \\
&= (\mathcal{K}\mathcal{M} \XX)(\rho)(\zeta_n(x)),
\end{align*}
and similarly for morphisms in $\XX\ord{n}$. This shows that $\zeta\colon \XX\to \mathcal K\mathcal M \XX$ is a morphism in $\PicGTam$.

Moreover, each $\zeta_n\colon \XX\ord{n}\to (\mathcal K\mathcal M \XX)\ord{n}$ is an equivalence of groupoids. First note that $\zeta_1\colon \XX\ord{1}\to (\mathcal K\mathcal M \XX)\ord{1}$ is an isomorphism of categories. Since $\zeta$ is functorial in $\ord{n}$, the following diagram commutes
\begin{tz}
\node (Y) at (0,2) {$\XX\ord{n}$};
\node (Z) at (3.5,2) {$\XX\ord{1}^{\times n}$};
\node (A) at (0,0) {$\mathcal (K\mathcal M\XX)\ord{n}$};
\node (B) at (3.5,0) {$(K\mathcal M\XX)\ord{1}^{\times n}$};
\draw[->] (A) to node[below,scale=0.8] {$S_n$} node[above,scale=0.8] {$\simeq$} (B);
\draw[->] (Y) to node[left,scale=0.8] {$\zeta_n$} (A);
\draw[->] (Y) to node[above,scale=0.8] {$S_n$} node[below,scale=0.8] {$\simeq$} (Z);
\draw[->] (Z) to node[right,scale=0.8] {$\zeta_1^{\times n}$} node[left,scale=0.8] {$\cong$} (B);
\end{tz}
and hence $\zeta_n\colon \XX\ord{n}\to (\mathcal K\mathcal M \XX)\ord{n}$ is an equivalence of categories. 

Finally, we show that these morphisms $\zeta\colon \XX\to \mathcal K\mathcal M \XX$, for $\XX\in \PicGTam$, assemble into a natural transformation $\zeta\colon \mathrm{id}_{\PicGTam}\Rightarrow \mathcal K\mathcal M$. Let $F\colon \XX\to \YY$ be a morphism in $\PicGTam$. We want to show that, for each $n\geq 0$, the following diagram commutes.
\begin{tz}
\node (Y) at (0,2) {$\XX\ord{n}$};
\node (Z) at (3.5,2) {$\YY\ord{n}$};
\node (A) at (0,0) {$\mathcal (K\mathcal M\XX)\ord{n}$};
\node (B) at (3.5,0) {$(K\mathcal M\YY)\ord{n}$};
\draw[->] (A) to node[below,scale=0.8] {$(\mathcal K\mathcal M F)_n$} (B);
\draw[->] (Y) to node[left,scale=0.8] {$\zeta_{\XX,n}$} (A);
\draw[->] (Y) to node[above,scale=0.8] {$F_n$} (Z);
\draw[->] (Z) to node[right,scale=0.8] {$\zeta_{\YY,n}$} (B);
\end{tz}
First note that, by naturality of $F$, the following diagrams commute.
\begin{tz}
\node (A) at (0,0) {$\XX\ord{n}$};
\node (B) at (2.5,0) {$\YY\ord{n}$};
\node (C) at (0,-2) {$\XX\ord{1}$};
\node (D) at (2.5,-2) {$\YY\ord{1}$};
\draw[->] (A) to node[above,scale=0.8] {$F_n$} (B);
\draw[->] (A) to node[left,scale=0.8] {$\XX(\nu_I)$} (C);
\draw[->] (B) to node[right,scale=0.8] {$\YY(\nu_I)$} (D);
\draw[->] (C) to node[below,scale=0.8] {$F_1$} (D);

\node (A) at (5.5,0) {$\XX\ord{n}$};
\node (B) at (8,0) {$\YY\ord{n}$};
\node (C) at (5.5,-2) {$\XX\ord{2}$};
\node (D) at (8,-2) {$\YY\ord{2}$};
\draw[->] (A) to node[above,scale=0.8] {$F_n$} (B);
\draw[->] (A) to node[left,scale=0.8] {$\XX(\mu_{I,J})$} (C);
\draw[->] (B) to node[right,scale=0.8] {$\YY(\mu_{I,J})$} (D);
\draw[->] (C) to node[below,scale=0.8] {$F_2$} (D);
\end{tz} 
Let $x\in \XX\ord{n}$. Then 
\begin{align*}
\zeta_{\YY,n}F_n(x)&= (\YY(\nu_I)F_n(x),\sigma'_{\YY(\mu_{I,J})F_n(x)})_{I\sqcup J\subset \und{n}} \\
&= (F_1\XX(\nu_I)(x), \sigma'_{F_2\XX(\mu_{I,J})(x)})_{I\sqcup J\subset \und{n}} \\
&= (F_1\XX(\nu_I)(x), \psi_{(\XX(\nu_I)(x),\XX(\nu_J)(x))}\circ F_1(\sigma_{\XX(\mu_{I,J})(x)}))_{I\sqcup J\subset \und{n}} \\
&= (\mathcal K \mathcal M F)_n(\XX(\nu_I),\sigma_{\XX(\mu_{I,J})(x)})_{I\sqcup J\subset \und{n}} \\
&=(\mathcal K\mathcal M F)_n(\zeta_{\XX,n}(x))
\end{align*}
where $\sigma'_{F_2\XX(\mu_{I,J})(x)}=\psi_{(\XX(\nu_I)(x),\XX(\nu_J)(x))}\circ F_1(\sigma_{\XX(\mu_{I,J})(x)})$ by definition of $\psi$. A similar computation can be made for the morphisms in $\XX\ord{n}$.

This is the main part in the proof of the following result. 

\begin{thm} \label{PicPicTam}
The functors 
	\[\begin{tikzcd}
		\mathcal K\colon \Pic\ar[r, shift left=1.1ex] & \PicGTam\colon\mathcal M\ar[l, shift left=1.1ex] 
	\end{tikzcd}\]
induce an equivalence of homotopy categories 
	\[\begin{tikzcd}
		\Ho(\Pic)\ar[r, shift left=1.1ex] & \Ho(\PicGTam)\ar[l, shift left=1.1ex] 
	\end{tikzcd}\]
where $\Ho(\Pic)$ is the localization of $\Pic$ with respect to those morphisms in $\Pic$ whose underlying functor is an equivalence of groupoids, and $\Ho(\PicGTam)$ is the localization of $\PicGTam$ with respect to the levelwise equivalences in $[\Gamma,\gpd]$.
\end{thm}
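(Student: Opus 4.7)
The groundwork for this theorem has already been laid: Subsection \ref{Construction_M} constructs the putative inverse $\mathcal{M}$ and exhibits a natural isomorphism $\mathcal{M}\mathcal{K}\cong \mathrm{id}_{\Pic}$, while the preceding paragraphs construct a natural transformation $\zeta\colon \mathrm{id}_{\PicGTam}\Rightarrow \mathcal{K}\mathcal{M}$ whose components $\zeta_n$ are equivalences of groupoids. What remains is to verify that both $\mathcal{K}$ and $\mathcal{M}$ preserve the respective classes of weak equivalences, so that they descend to functors on the homotopy categories, and then assemble these ingredients.

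The preservation by $\mathcal{M}$ is immediate: by construction $\mathcal{M}F$ has $F_{\ord{1}}\colon \XX\ord{1}\to \YY\ord{1}$ as its underlying functor, and weak equivalences in $\Pic$ are exactly those morphisms whose underlying functor is an equivalence of groupoids. Combined with Remark \ref{we_PicTam_vsGammaGTam} in the case $n=1$, which identifies weak equivalences of Picard--Tamsamani categories with levelwise equivalences of $\Gamma$-groupoids, this gives the claim.

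For preservation by $\mathcal{K}$, the level-one component $(\mathcal{K}F)_{\ord{1}}$ agrees with $F$ under the natural isomorphism $\mathcal{K}P\ord{1}\cong P$. For $n\geq 2$, I would apply two-out-of-three to the naturality square

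\begin{tz}
\node (A) at (0,2) {$\mathcal{K}P\ord{n}$};
\node (B) at (4,2) {$\mathcal{K}P\ord{1}^{\times n}$};
\node (C) at (0,0) {$\mathcal{K}Q\ord{n}$};
\node (D) at (4,0) {$\mathcal{K}Q\ord{1}^{\times n}$};
\draw[->] (A) to node[above,scale=0.8] {$S_n$} (B);
\draw[->] (C) to node[below,scale=0.8] {$S_n$} (D);
\draw[->] (A) to node[left,scale=0.8] {$(\mathcal{K}F)_{\ord{n}}$} (C);
\draw[->] (B) to node[right,scale=0.8] {$(\mathcal{K}F)_{\ord{1}}^{\times n}$} (D);
\end{tz}

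whose horizontal Segal maps are equivalences by Lemma \ref{KtheoryPic} (since $\mathcal{K}P$ and $\mathcal{K}Q$ are very special $\Gamma$-groupoids), forcing $(\mathcal{K}F)_{\ord{n}}$ to be an equivalence as well.

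With both functors now descending to the homotopy categories, $\mathcal{M}\mathcal{K}\cong \mathrm{id}_{\Pic}$ gives a natural isomorphism in $\Ho(\Pic)$, and $\zeta$, being a componentwise equivalence of groupoids and hence a weak equivalence in $\PicGTam$, descends to a natural isomorphism in $\Ho(\PicGTam)$. Together these exhibit the claimed equivalence of homotopy categories. The genuine obstacle was already surmounted in Subsection \ref{Construction_M}, namely the careful construction of $\mathcal{M}$ via chosen inverses for the Segal maps and the verification that the resulting $\zeta$ is compatible with the $\Gamma$-structure and symmetry; given those, the present theorem is a purely formal assembly.
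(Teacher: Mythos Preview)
Your proof is correct and follows essentially the same approach as the paper: both verify that $\mathcal{K}$ and $\mathcal{M}$ preserve the relevant weak equivalences and then invoke the already-constructed natural isomorphism $\mathcal{M}\mathcal{K}\cong\id_{\Pic}$ and levelwise equivalence $\zeta\colon \id_{\PicGTam}\Rightarrow \mathcal{K}\mathcal{M}$. Your two-out-of-three argument via the Segal square simply makes explicit what the paper compresses into the phrase ``use the fact that both $\KP$, $\KP'$ are special $\Gamma$-objects in groupoids to conclude that the induced map $\KP\to\KP'$ is a levelwise equivalence.''
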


\begin{proof}
We have recalled the constructions of the functors $\mathcal{K}$ in \cref{KP} and $\mathcal{M}$ in \cref{Construction_M}. We have checked that $\mathcal{K}$ takes values in $\PicGTam$ in \cref{KtheoryPic}. Next, we have to show that they induce functors on the homotopy categories above. Given a levelwise equivalence $F\colon \XX \to \YY$, the underlying functor of $\mathcal{M}F$ is precisely $F\ord{1}$ and thus an equivalence of groupoids. 

Conversely, given a morphism of Picard categories $P\to P'$ whose underlying functor is an equivalence of groupoids, we use the fact that both $\KP$, $\KP'$ are special $\Gamma$-objects in groupoids to conclude that the induced map $\KP\to \KP'$ is a levelwise equivalence in $[\Gamma,\gpd]$. 

As we have shown that there is a natural isomorphism $\mathcal{M}\mathcal{K}\cong\id_{\Pic}$ and that the natural transformation $\zeta\colon \id_{\PicGTam} \Rightarrow \mathcal{K}\mathcal{M}$ is a levelwise  equivalence, it completes the proof. 
\end{proof}

We are now ready to derive the promised corollary.

\begin{proof}[Proof of \cref{CorollaryPicard}]
This is now the straightforward combination of \cref{PicPicTam} and \cref{StableHH} in the case $n=1$. 
\end{proof}

\begin{rmk}
In addition to the equivalence of homotopy categories, \cite{LackPaoli} also show that their $2$-nerve induces a biequivalence of $2$-categories from $\Nhom$ to the $2$-category of Tamsamani $2$-categories if we allow pseudonatural transformations as the $1$-morphisms of the target. We expect an analogous statement to hold for Picard categories and Picard--Tamsamani $1$-categories.
\end{rmk}

\bibliographystyle{alpha}
\bibliography{StableTypes}

\begin{thebibliography}{{Sha}20b}

\bibitem[BF78]{BousfieldFriedlander}
Aldridge Bousfield and Eric Friedlander.
\newblock Homotopy theory of $\gamma$-spaces, spectra, and bisimplical sets.
\newblock {\em Geometric Applications of Homotopy Theory II, Springer Lecture
  Notes in Math.}, 658(?):80--130, 1978.

\bibitem[Boy06]{BoyarchenkoNotes}
Mitya Boyarchenko.
\newblock Picard groupoids and spectra, 2006.
\newblock informal lecture notes found at
  \href{http://math.uchicago.edu/~drinfeld/langlands/spectra/pgs.pdf}{http://math.uchicago.edu/~drinfeld/langlands/spectra/pgs.pdf},
  last accessed Jan 2020.

\bibitem[BP14]{BlancPaoli2014}
David Blanc and Simona Paoli.
\newblock Segal-type algebraic models of {$n$}-types.
\newblock {\em Algebr. Geom. Topol.}, 14(6):3419--3491, 2014.

\bibitem[CH14]{CegarraHeredia}
Antonio~M. Cegarra and Benjam\'{\i}n~A. Heredia.
\newblock Comparing geometric realizations of tricategories.
\newblock {\em Algebr. Geom. Topol.}, 14(4):1997--2064, 2014.

\bibitem[CMS16]{CarquevilleMeusburgerSchaumann}
Nils {Carqueville}, Catherine {Meusburger}, and Gregor {Schaumann}.
\newblock {3-dimensional defect TQFTs and their tricategories}.
\newblock {\em arXiv e-prints}, page arXiv:1603.01171, Mar 2016.

\bibitem[Dri06]{DrinfeldInfDim}
Vladimir Drinfeld.
\newblock Infinite-dimensional vector bundles in algebraic geometry: an
  introduction.
\newblock In {\em The unity of mathematics}, volume 244 of {\em Progr. Math.},
  pages 263--304. Birkh\"{a}user Boston, Boston, MA, 2006.

\bibitem[GH15]{GepnerHaugseng}
David Gepner and Rune Haugseng.
\newblock Enriched {$\infty$}-categories via non-symmetric {$\infty$}-operads.
\newblock {\em Adv. Math.}, 279:575--716, 2015.

\bibitem[GJ09]{GoerssJardine}
Paul~G. Goerss and John~F. Jardine.
\newblock {\em Simplicial homotopy theory}.
\newblock Modern Birkh\"{a}user Classics. Birkh\"{a}user Verlag, Basel, 2009.
\newblock Reprint of the 1999 edition [MR1711612].

\bibitem[GJO19a]{GurskiJohnsonOsornoSHH}
Nick Gurski, Niles Johnson, and Ang\'{e}lica~M. Osorno.
\newblock The 2-dimensional stable homotopy hypothesis.
\newblock {\em J. Pure Appl. Algebra}, 223(10):4348--4383, 2019.

\bibitem[GJO19b]{GurskiJohnsonOsornoNotices}
Nick Gurski, Niles Johnson, and Ang\'{e}lica~M. Osorno.
\newblock Topological invariants from higher categories.
\newblock {\em Notices Amer. Math. Soc.}, 66(8):1225--1237, 2019.

\bibitem[GJOS17]{GurskiJohnsonOsornoStephan}
Nick Gurski, Niles Johnson, Ang\'{e}lica~M. Osorno, and Marc Stephan.
\newblock Stable {P}ostnikov data of {P}icard 2-categories.
\newblock {\em Algebr. Geom. Topol.}, 17(5):2763--2806, 2017.

\bibitem[GPS95]{GordonPowerStreet}
R.~Gordon, A.~J. Power, and Ross Street.
\newblock Coherence for tricategories.
\newblock {\em Mem. Amer. Math. Soc.}, 117(558):vi+81, 1995.

\bibitem[Gur12]{GurskiTricat}
Nick Gurski.
\newblock Biequivalences in tricategories.
\newblock {\em Theory Appl. Categ.}, 26:No. 14, 349--384, 2012.

\bibitem[Hir03]{Hirschhorn}
Philip~S. Hirschhorn.
\newblock {\em Model categories and their localizations}, volume~99 of {\em
  Mathematical Surveys and Monographs}.
\newblock American Mathematical Society, Providence, RI, 2003.

\bibitem[Hov99]{HoveyModelCategories}
Mark Hovey.
\newblock {\em Model categories}, volume~63 of {\em Mathematical Surveys and
  Monographs}.
\newblock American Mathematical Society, Providence, RI, 1999.

\bibitem[JO12]{JohnsonOsorno}
Niles Johnson and Ang\'{e}lica~M. Osorno.
\newblock Modeling stable one-types.
\newblock {\em Theory Appl. Categ.}, 26:No. 20, 520--537, 2012.

\bibitem[JP12]{JibladzePirashvili}
Mamuka Jibladze and Teimuraz Pirashvili.
\newblock Cohomology with coefficients in stacks of abelian 2-groups.
\newblock {\em J. Pure Appl. Algebra}, 216(10):2274--2290, 2012.

\bibitem[{Lei}00]{LeinsterHoAlgebra}
Tom {Leinster}.
\newblock {Homotopy Algebras for Operads}.
\newblock {\em arXiv Mathematics e-prints}, page math/0002180, Feb 2000.

\bibitem[Lei02]{Leinster}
Tom Leinster.
\newblock A survey of definitions of {$n$}-category.
\newblock {\em Theory Appl. Categ.}, 10:1--70, 2002.

\bibitem[LP08]{LackPaoli}
Stephen Lack and Simona Paoli.
\newblock 2-nerves for bicategories.
\newblock {\em $K$-Theory}, 38(2):153--175, 2008.

\bibitem[Lur09]{LurieHTT}
Jacob Lurie.
\newblock {\em Higher topos theory}, volume 170 of {\em Annals of Mathematics
  Studies}.
\newblock Princeton University Press, Princeton, NJ, 2009.

\bibitem[Man10]{MandellKtheory}
Michael~A. Mandell.
\newblock An inverse {$K$}-theory functor.
\newblock {\em Doc. Math.}, 15:765--791, 2010.

\bibitem[Pao19]{Paoli-2019}
Simona Paoli.
\newblock {\em Simplicial methods for higher categories: Segal-type models of
  weak $n$-categories}, volume~26 of {\em Algebra and Applications}.
\newblock Springer, Cham, 2019.

\bibitem[Pat12]{Patel}
Deepam Patel.
\newblock de {R}ham {${\epsilon}$}-factors.
\newblock {\em Invent. Math.}, 190(2):299--355, 2012.

\bibitem[Pir15]{Pirashvili}
Teimuraz Pirashvili.
\newblock Projective and injective symmetric categorical groups and duality.
\newblock {\em Proc. Amer. Math. Soc.}, 143(3):1315--1323, 2015.

\bibitem[Sch18]{SchwedeGlobal}
Stefan Schwede.
\newblock {\em Global homotopy theory}, volume~34 of {\em New Mathematical
  Monographs}.
\newblock Cambridge University Press, Cambridge, 2018.

\bibitem[Sch19]{SchwedeSymmetricSpectra}
S.~Schwede.
\newblock An untitled book project about symmetric spectra.
\newblock \url{http://www.math.uni-bonn.de/people/schwede/SymSpec-v3.pdf},
  2019.
\newblock last accessed Dec 2019.

\bibitem[Seg74]{SegalCat}
Graeme Segal.
\newblock Categories and cohomology theories.
\newblock {\em Topology}, 13:293--312, 1974.

\bibitem[{Sha}20a]{SharmaPicard}
Amit {Sharma}.
\newblock {Picard groupoids and {$\Gamma$}-categories}.
\newblock {\em arXiv e-prints}, page arXiv:2002.05811, February 2020.

\bibitem[{Sha}20b]{SharmaSymMonCat}
Amit {Sharma}.
\newblock Symmetric monoidal categories and {$\Gamma$}-categories.
\newblock {\em Theory Appl. Categ.}, 35(14):417--512, 2020.

\bibitem[{Sim}98]{SimpsonCounterexample}
Carlos {Simpson}.
\newblock {Homotopy types of strict 3-groupoids}.
\newblock {\em arXiv Mathematics e-prints}, page math/9810059, Oct 1998.

\bibitem[Sim12]{Simpson}
Carlos Simpson.
\newblock {\em Homotopy theory of higher categories}, volume~19 of {\em New
  Mathematical Monographs}.
\newblock Cambridge University Press, Cambridge, 2012.

\bibitem[S{\'{\i}}n78]{Sinh}
Ho\`ang~Xu\^{a}n S{\'{\i}}nh.
\newblock {${\rm Gr}$}-cat\'{e}gories strictes.
\newblock {\em Acta Math. Vietnam.}, 3(2):47--59, 1978.

\bibitem[Swi02]{Switzer}
Robert~M. Switzer.
\newblock {\em Algebraic topology---homotopy and homology}.
\newblock Classics in Mathematics. Springer-Verlag, Berlin, 2002.
\newblock Reprint of the 1975 original [Springer, New York; MR0385836 (52
  \#6695)].

\bibitem[{Tam}96]{TamsamaniPreprintSpaces}
Zouhair {Tamsamani}.
\newblock {Equivalence de la th{\'e}orie homotopique des $n$-groupo{\"\i}des et
  celle des espaces topologiques $n$-tronqu{\'e}s}.
\newblock {\em arXiv e-prints}, pages alg--geom/9607010, Jul 1996.

\bibitem[Tam99]{Tamsamani}
Zouhair Tamsamani.
\newblock Sur des notions de {$n$}-cat\'{e}gorie et {$n$}-groupo\"{\i}de non
  strictes via des ensembles multi-simpliciaux.
\newblock {\em $K$-Theory}, 16(1):51--99, 1999.

\bibitem[Tri06]{TrimbleTetra}
Todd Trimble.
\newblock Notes on tetracategories, 2006.
\newblock informal notes available at
  \url{http://math.ucr.edu/home/baez/trimble/tetracategories.html} last
  accessed Jan 2020.

\end{thebibliography}
\end{document}